


\documentclass[11pt]{article}
\usepackage[centertags,intlimits]{amsmath}

\usepackage{amsfonts}                     
\usepackage{amsthm}
\usepackage{graphics}

\allowdisplaybreaks[2]
\setlength{\textwidth}{5.5in}  
\setlength{\textheight}{7.4in}  
\setlength{\oddsidemargin}{0.5in}  
\setlength{\evensidemargin}{0.5in}

\setlength{\headheight}{0in}

\numberwithin{equation}{section}
\newtheorem{theorem}{Theorem}[section]
\newtheorem{lemma}{Lemma}[section]

\theoremstyle{remark}
\newtheorem{remark}{Remark}[section]

\providecommand{\abs}[1]{\lvert #1\rvert}
\providecommand{\norm}[1]{\lVert #1\rVert}

\DeclareMathOperator*{\esssup}{ess\,sup}

\newcommand{\nc}{\newcommand}
\nc{\vb}{\mathbf{v}}
\nc{\bx}{\mathbf{x}}
\nc{\by}{\mathbf{y}}
\nc{\bz}{\mathbf{z}}
\nc{\bu}{\mathbf{u}}
\nc{\bv}{\mathbf{v}}
\nc{\ba}{\mathbf{a}}
\nc{\bs}{\mathbf{s}}
\nc{\bq}{\mathbf{q}}
\nc{\bd}{\mathbf{d}}
\nc{\bb}{\mathbf{b}}
\nc{\bc}{\mathbf{c}}
\nc{\bi}{\mathbf{i}}
\nc{\bfr}{\mathbf{r}}
\nc{\bA}{\mathbf{A}}
\nc{\R}{\mathbb R}
\nc{\N}{\mathbb N}
\nc{\C}{\mathbb C}
\nc{\D}{\mathbb D}
\nc{\Z}{\mathbb Z}
\nc{\F}{\mathbf F}
\nc{\bbS}{\mathbb S}
\nc{\bE}{\mathbf E}
\nc{\B}{\cal B}
\nc{\br}{\bigr}
\nc{\bl}{\bigl}
\nc{\Bl}{\Bigl}
\nc{\Br}{\Bigr}
\nc{\ind}[1]{\,\mathbf{1}_{\{#1\}}\,}
\nc{\bP}{\mathbf{P}}




\title{A large-population limit for a Markovian model of group-structured populations} 
\author{A. Puhalskii,  B. Simon}

\begin{document}
\maketitle




\begin{abstract}
A Markovian model of group-structured (two-level) population dynamics features births, deaths, and migrations of individuals,
and fission and extinction of groups. These models are useful for studying group selection and other evolutionary processes that
occur when individuals live in distinct groups.
We show that the sample paths of a properly scaled sequence of these
models converge in an appropriate Skorohod space 
to a deterministic trajectory that is a unique solution to
 a quasilinear evolution equation. 
The  PDE model
can therefore be justified as an approximation to the Markovian one.
\end{abstract}
\section {Introduction}
A group-structured population is one comprised of individuals that live in distinct groups.
The individuals may be of different types, e.g., ``cooperators'' and ``defectors'', based on different heritable traits. 
Group-structured populations are very common in the natural world. Social animals often live in distinct groups, 
e.g., packs of wolves, ant colonies, tribes of hunter-gatherers; and group structure exist in other populations as well,
e.g., the parasites that reside in a given host organism constitute a group. The individuals in a group-structured population are not necessarily 
multicellular organisms. For example, the individuals could be microbes, like Dictyostelium (a species of amoeba), 
in which case the corresponding groups would be the  multicellular organisms made of them (slime molds).

Evolutionary theorists since Darwin's time have wondered how group structure may (or may not) affect evolution, 
and in particular the evolution of cooperative (altruistic) behaviors. 
Writing about prehistoric tribes of hunter-gatherers, 
Darwin \cite[p. 166]{Dar1871} famously said 
\begin{quotation}
\noindent {\it\small There can be no doubt that a tribe including many members who, from possessing in high degree the spirit of patriotism, fidelity, obedience, 
courage, and sympathy, were always ready to aid one another, and to sacrifice themselves for the common good, would be victorious over most other tribes;
and this would be natural selection.}
\end{quotation}
In other words, although altruistic behavior may be reproductively disadvantageous for individuals (the hunter-gatherers) within the groups (the tribes), 
groups with more altruistic individuals fare better, and over time this could lead to the evolution of altruistic individuals in the population as a whole.
Altruistic behavior cannot easily evolve in a well-mixed population\footnote{If the individuals are sufficiently closely related then
Hamilton's rule \cite{Ham64} shows that altruistic traits can evolve.}
since altruists can be exploited by nonaltruists, thereby lowering their ``fitness'', 
but it may evolve more easily in group-structured populations, where more-cooperative groups have some survival advantage.

By the 1960's Darwin's thoughts on this topic had been rephrased in terms of the efficacy of group selection.
The best arguments and mathematical models at that time, e.g.,
Maynard Smith \cite{May64}, Williams \cite{Wil66},   implied that group selection,
while theoretically possible, was much too weak to have an effect in the natural world. 
Explanatory theories like ``the selfish gene'', Williams \cite{Wil66}, Dawkins \cite{Daw76} and ``kin selection'', Hamilton \cite{Ham64}, Maynard Smith \cite{May64}, 
reinforced those arguments.  
Later, Wilson \cite{Wil75}, Sober and Wilson \cite{Unto98}, Wilson and Wilson \cite{W&W07}, and others gradually reopened the debate, 
being more careful about the way group selection was defined. 
The new group selection proponents realized that group selection (properly defined) was a potent evolutionary force, 
however they lacked mathematical models to help make their cases.
The problem with all the mathematical models of group selection up to that point was that they did not
properly account for group-level birth and death events like fission and extinction. Furthermore, the models were typically designed
for an equilibrium analysis only, so the time-dependent mechanisms that led to the evolution of cooperation/altruism remained mysterious.
The later proponents of group selection were right about the efficacy of group selection,
but they did not have a good mathematical explanation of how the phenomenon works.   

In Simon \cite{Sim10} a Markovian model of group-structured population dynamics was proposed that featured individual-level events like births, deaths,
and migrations, and also group-level events like fission and extinction. 
The paper also included a heuristic derivation of a PDE based on the Markovian model. 
The population dynamics from the Markovian model and PDE model are very similar to each other (Figure 1), 
leading one to suspect that there is a fundamental mathematical connection between the two models.
The purpose of this paper is to make the connection precise. 
The PDE trajectory is proven to be the unique limit of a certain scaled sequence of sample paths from the Markovian model.

Numerical experiments with the Markovian and PDE models from Simon \cite{Sim10} showed that previous models that ignored or misrepresented group-level 
birth and death events vastly underestimated the strength of group selection, Simon, Fletcher, Doebeli \cite{SFD13}, Simon and Pilosov \cite{S&P16}. 
There are other contemporary models of group-structured populations analyzed in the literature that also shed light on the process of group selection. 
Traulsen and Nowak \cite{T&N06} proposed a model based on nested Moran processes.
They found conditions for a single mutant cooperator in a population of defectors to have a better chance of fixing in the population than a single mutant defector
in a population of cooperators. Their results are valid in a limiting regime as the rates of group-level births and deaths approach zero,
and as the fitness advantage for defectors approaches zero (weak selection).
Another difference between their work and ours is that by looking only at fixation probabilities, their analysis did not involve population dynamics.
Luo \cite{Luo13} and Luo and Mattingly \cite{LuoMat17} also considered nested Moran processes as models of group-structured populations. 
They obtained large--population limits for the group process.
One limit is the solution of 
 a deterministic PDE and the other one is a Fleming--Viot process. 
A wide variety of large--population limits for Markovian models 
of a similar kind, but restricted to a single biological level (no group structure), 
can be found in Champagnat, et.al. \cite{Champ06} and Puhalskii and Simon \cite{P&S12}. 

The present results are the same sort as the PDE limit in Luo and Mattingly \cite{LuoMat17}, but the model of group-structured populations
studied here is more (biologically) realistic. In particular, the birth and death events at each level are decoupled here, 
i.e., it is not necessary to assume that the number of groups and the sizes
of the groups are constant, which is a defining feature of the Moran
process.               
Furthermore, the group-level birth event in the  present model is fission, 
which is more realistic than the ``group cloning'' birth
event implicit in a model based on Moran processes.
Migrations are introduced into the model 
and the number of types of  individuals is not restricted to two,
unlike in Luo and Mattingly \cite{LuoMat17}.
On the technical side, abandoning the conventions of the Moran model
forces one to grapple with convergence of processes assuming values in
noncompact spaces of measures and accounting for migrations 
results in the limit equation being quasilinear whereas the PDE
in Luo and Mattingly \cite{LuoMat17} is semilinear.
In addition, rather than working with the generator of the Markov
process of the number of groups which seems to be problematic in our
setup, we work directly with the balance equations for 
the process trajectories. We prove convergence
to the limit PDE for two different topologies on the state--space of
the population process. One is the topology of
weak convergence of nonnegative measures akin to that used in 
 Luo and Mattingly \cite{LuoMat17} 
and the other is a stronger  topology of  weak
 convergence in an $\mathbb L^p$--space. The second mode of
 convergence  stipulates
a different scaling.

\section {The Markovian model of group-structured populations}

We consider a population  of a finite number of groups, where each group consists of a finite number of individuals. 
There are $\ell$
 types of individuals, e.g., cooperators and defectors.
The state of a group is specified by the number of each type in the group. 
An $i=(i_1,\ldots,i_\ell)$--group is a group with $i_1$ type 1
individuals,
 $i_2$ type 2 individuals and so on. 
(We treat the $i$ as  $\R^\ell$--vectors, e.g.,
$i-i'=(i_1-i_1',\ldots,i_\ell-i_\ell')$\,, and define $i\ge i'$ to
mean that the entries of $i-i'$ are nonnegative.
We denote $\abs{i}=i_1+\ldots+i_\ell$ and
  let
$e_k$ represent the $k$th element of the standard basis in
$\R^\ell$\,,
e.g., $e_1=(1,0,\ldots,0)$\,.)
Within each group, individuals independently 
give birth (asexually and without mutation) and die at stochastic rates 
which may depend on the individual type and the state of the group.
 The per capita birth rate of type $k$ individuals in an $i$-group is $\beta^k(i),~k=1,\ldots,\ell$.
Likewise, the per capita death rate of type $k$ individuals in an $i$-group is $\delta^k(i)$\,.
(Naturally, we assume that $\beta^k(i)=\delta^k(i)=0$ when $i_k=0$\,.)

Let $X_t(i)$ be the number of $i$--groups in the population at time $t$. 
Then $\{ X_t(i),~i\in\mathbb Z_+^\ell\setminus\{0\}\}$ specifies the state of the population at time $t$. 
In the model,  the groups independently
 die of extinction, the extinction rate  for
 an $i$--group being   $\epsilon(i)X_t^\ast$, 
where $X_t^\ast = \sum_{i} X_t(i)$ is the total number of groups at time $t$. 
An extinction event is the instantaneous death of all the individuals
in a group, i.e., the death of the group. The other group--level event
in the model is fission.
Let us say that (unordered) 
set $\pi_i$ of  $\ell$--vectors with nonnegative integer
entries is a partition of $i$ if the vectors are nonzero and
$\sum_{j\in\pi_i}j=i$\,.
An $i$--group 
 fissions at rate $\phi(i)$ (independently of the
 states of the other groups) which means that
 $i$ is split according to
 partition $\pi_i$\,, which is chosen at random,
 the elements of the partition determine the makeup of
''offspring'' groups and the ``parent'' group ceases to exist. 
Thus, there is a conservation of individuals under fission.
(It is allowed for a partition to consist of the single vector $i$
which constitutes ''a nonproper  fission''.)
The probability that an $i$--group is fissioned according to
partition $\pi_i$  is denoted by $\zeta_i(\pi_i)$\,.
Evidently, $\sum_{\pi_i}\zeta_i(\pi_i)=1$\,. 
Let $\pi_i(i')$ denote the number of $i'$--groups in partition
$\pi_i$\,.
We let $\eta(i,i')=\sum_{\pi_i}
\pi_i(i')\zeta_i(\pi_i)$ denote the expected number of $i'$--groups
produced by the fission.
    Finally, individuals can independently 
migrate from one group to another in the
model. The per capita migration rate of type $k$ individuals in an
$i$--group  is $\mu^k(i)$\,. It is assumed that a
migrating individual chooses a group from the population to join
(possibly, the one they are coming from), 
each with equal probability, considering themselves as a member of their
group when deciding on the move.

\medskip

Let
\begin{trivlist}
\item 
$B_t^{k}(i)$   represent the number of type $k$  births in all
$i$--groups in $[0,t]$\,, where
$B_t^{k}(i)=0$ when  $i_k=0$\,,
\item 
$D_t^{k}(i)$   represent the number of type $k$ deaths
 in all
$i$--groups in $[0,t]$\,, where
$D_t^{k}(i)=0$ when  $i_k=0$\,,
\item 
$M_t^{k}(i)$   represent the 
 number of type $k$  immigrations 
 to all
$i$--groups in $[0,t]$\,,
\item
$\overline M_t^{k}(i)$  represent the 
 number of type $k$  emigrations 
 from all
$i$--groups in $[0,t]$\,, where
$\overline M_t^{k}(i)=0$  when $i_k=0$\,,
\item
$\overline F_t(i)$ represent the number of fissions
 of $i$--groups in $[0,t]$\,, 
\item
$F_t(i',i)$ represent the number of
$i$--groups that are produced as a result of fissioning
 of $i'$--groups in $[0,t]$\,,
\item
$E_t(i)$ represent the number 
of $i$--groups that get extinct in $[0,t]$\,.
\end{trivlist}
It is assumed that all these processes  take
values in $\mathbb Z_+$\,, are equal to zero when $t=0$ and   have nondecreasing
piecewise constant rightcontinuous trajectories with lefthand
limits. All the processes with the exception of $F_t(i',i)$ have unit
jumps. The jump sizes  of the
 process $F_t(i',i)$ are determined by the numbers of groups that 
the $i'$--groups  may fission into.
The birth, death,  migration, and extinction 
processes are assumed to be independent
Poisson processes. 
In order to be more specific, we
introduce  independent ''primitives''  as follows.
Let, 
for $i\in\mathbb Z_+^\ell\setminus\{0\}$\,, $k\in\{1,\ldots,\ell\}$\,, $p\in\N$\,,
$l\in\N$\,, 
  $r\in\N$\,, and  $X=(X(i))\in \Z_+^{\mathbb Z_+^\ell
\setminus\{0\}}$ with $X^\ast=\sum_{i}X(i)\in(0,\infty)$\,,
\begin{trivlist}
\item $L^{B,k}_t(i,p,r)$\, 
 represent Poisson processes of rates 
$\beta^{k}(i)$\,,
\item $L^{D,k}_t(i,p,r)$\, 
 represent Poisson processes of rates 
$\delta^{k}(i)$\,,
\item $L^{\overline M,k}_t(i,p,r)$\, 
 represent Poisson processes of rates 
$\mu^{k}(i)$\,,
\item $L^{\overline F}_t(i,p)$
 represent Poisson processes of rates 
$\phi(i)$\,,
\item $L^{ E}_t(i,p)$
 represent Poisson processes of rates 
$\epsilon(i)$\,,
\item $\vartheta^k_i(p,r,X)$\,, with $i_k\ge 1$\,,
  represent 
  random variables assuming values in $\mathbb
  Z_+^\ell\setminus\{0\}$ such that
  \begin{equation*}
\mathbf P\bl(\vartheta^k_i(p,r,X)=i'\br)=
\frac{X(i')}{X^\ast}\,,    
  \end{equation*}
\item $\theta_i(p)$ represent random partitions of $i$ distributed as 
$\zeta_i$\,. 
\end{trivlist}
(Informally, $p$ represents the index of a group and $r$ represents the
index of an individual in a group,
$\vartheta^k_i(p,r,X)$ represents the makeup of the group the $r$th
type $k$ migrating 
individual of the $p$th $i$--group joins upon migration when the
 state of the population is $X$\,, and $\theta_i(p)$ is the set of
 groups that the $p$th fissioning
 $i$--group fissions into.)
All these processes and random variables are assumed mutually
independent for different $i$\,, or $p$\,, or $r$\,.

We let 
$\theta_i(i',p)$ represent the number of
$i'$--groups in the partition $\theta_i(p)$  
 so that $\mathbf P(\theta_i(i',p)=j)=
\sum_{\pi_i:\,\pi_i(i')=j}\zeta_i(\pi_i)$\,.
It is noteworthy that
\begin{equation}
  \label{eq:97}
  \mathbf E\theta_i(i',p)=\eta(i,i')\,.
\end{equation}
We assume that the  $(X_t(i)\,,t\ge0)$ are
$\mathbb Z_+$--valued  processes which have
piecewise constant rightcontinuous trajectories with limits on the
left
 and that
the following recursions are satisfied, with  $\Delta$ representing
the jump of a process, with $t-$ denoting the lefthand limit so that, e.g.,
$\Delta X_t(i)=X_t(i)-X_{t-}(i)$ and
with $\mathbf 1_\Gamma$ denoting the indicator function of event or element
$\Gamma$\,:
  \begin{align}
         \label{eq:6}
       \begin{split}
    \Delta          B^{k}_t(i)=
\sum_{p=1}^{ X_{t-}(i)}\sum_{r=1}^{i_k}\Delta L^{B,k}_t(i,p,r)\,,\;
 \Delta          D^{k}_t(i)=
\sum_{p=1}^{ X_{t-}(i)}\sum_{r=1}^{i_k}\Delta L^{D,k}_t(i,p,r)\,,\\
\Delta          \overline M^{k}_t(i)=
\sum_{p=1}^{ X_{t-}(i)}\sum_{r=1}^{i_k}\Delta L^{\overline M,k}_t(i,p,r)
(1-
\mathbf 1_{\vartheta^k_i(p,L^{\overline M,k}_t(i,p,r),
X_{t-})}(i)),\\
\Delta           M^{k}_t(i)=
\sum_{i'\not=i}\sum_{p=1}^{
X_{t-}(i')}\sum_{r=1}^{i'_k}
\Delta L^{\overline M,k}_t(i',p,r)
\mathbf 1_{\vartheta^k_{i'}(p,L^{\overline M,k}_t(i',p,r),
X_{t-})}(i)\,,\\
\Delta          \overline F_t(i)=
\sum_{p=1}^{ X_{t-}(i)}\Delta L^{\overline F}_t(i,p)\,,\;
    \Delta F_t(i,i')=  \sum_{p=1}^{ X_{t-}(i)}
\theta_i(i',p)
\Delta L^{\overline F}_t(i,p)\,,\\
\Delta E_t(i)=
\sum_{p=1}^{X_{t-}(i)X_{t-}^\ast}
\Delta L^{ E}_t(i,p)
\end{split}\end{align}
and 
\begin{align}
  \label{eq:1}
  \begin{split}
      \Delta X_t(i)=-&\sum_{k=1}^\ell \Delta 
B_t^{k}(i)+\sum_{k=1}^\ell 
\Delta B^{k}_{t}(i-e_k)
-\sum_{k=1}^\ell \Delta D^k_t(i)+
\sum_{k=1}^\ell \Delta
D^{k}_t(i+e_k)\\
-&\sum_{k=1}^\ell \Delta M^{k}_{t}(i)
+\sum_{k=1}^\ell \Delta M^{k}_{t}(i-e_k)
-\sum_{k=1}^\ell \Delta \overline M^{k}_t(i)
+\sum_{k=1}^\ell \Delta \overline M^{k}_t(i+e_k)
\\-&
\Delta \overline{F}_t(i)+
\sum_{i'}\Delta F_t(i',i)-\Delta E_t(i)\,.
  \end{split}
\end{align}
(We also assume that $B^{k}_t(i)=M^{k}_t(i)=0$ when $i_k=-1$\,.)
Induction on the jump epochs of the primitive processes shows that
\eqref{eq:6} and  \eqref{eq:1}  admit a unique solution for given
$X_0$ up to the time that $X^\ast_t=0$\,, which may happen never. 
From that time on, we let
the righthand sides of \eqref{eq:6} vanish, so, $X_t(i)=0$\,.
If the $\beta^k(i)$ are bounded above uniformly in $i$ by some $\beta^k$\,,
then the population stays finite at all times, provided it is finite
initially,  because it does not exceed the
value of the Yule process with birth rate 
$\sum_{k=1}^\ell \beta^k$ and the population at time zero being
$X_0^\ast$\,.

Let $X_t=(X_t(i)\,, i\in \mathbb Z_+^\ell\setminus\{0\})$\,.
It is  a Markov process with values 
in $\mathbb Z_+^{\mathbb Z_+^\ell\setminus \{0\}}$\,. 
One can  view $X_t$ as a density with respect to the counting
measure on $\mathbb Z_+^\ell\setminus\{0\}$ so that $X_t$ can be
identified with the measure on $\mathbb Z_+^\ell\setminus\{0\}$ induced
by the density and defined by
$  \Lambda_t(\Gamma)=\sum_i X_t(i) \mathbf 1_\Gamma(i)
$\,, where $\Gamma\subset \mathbb Z_+^\ell\setminus\{0\}$\,.
 Then, $(X_t\,,t\ge0)$ can be referred to as a measure--valued
process.

For the limit theorem, we consider a sequence of models as above,
labelled with two  parameters, $m$ and $n$\,, which we let go to
infinity.
 Accordingly, the
variables we have introduced are supplemented with superscripts
$n$ and $m$\,, e.g., 
$B_t^{n,m,k}(i)$ stands for the number of type $k$
 births in all
$i$--groups in $[0,t]$ for the $(m,n)$--model.
Informally, $m$ characterises the group number and $n$ characterises
the group sizes.
It is assumed throughout that  the functions
$i_k(\beta^{n,m,k}(i)+\delta^{n,m,k}(i)+\mu^{n,m,k}(i))$\,, 
$\phi^{n,m}(i)$ and $m\epsilon^{n,m}(i)$
 are bounded in
$n,m,$ and $i$\,,  that the $m\epsilon^{n,m}(i)$ are bounded
away from zero, and that the number of groups that may be produced as
a result of fissioning is bounded, i.e.,
  the random variables $\theta^{n,m}_i(i',p)$ are
bounded. It is  convenient to extend the domain of $i$ to 
all of $\mathbb Z_+^\ell$ so that $\beta^{n,m,k}(0)$ and similar quantities
are defined, and define  $X^{n,m}_t(0)=0$\,.

Let 
\begin{equation}
  \label{eq:2}
    \Lambda^{n,m}_t(\Gamma)=\frac{1}{m}\,\sum_i X^{n,m}_t(i)\mathbf
    1_\Gamma
\bl(\frac{i}{n}\br)\,,
\end{equation}
where $\Gamma\subset\R_+^\ell$\,,
and let
$\Lambda^{n,m}=((\Lambda^{n,m}_t(\Gamma)\,,\Gamma\in\mathcal{B}(\R_+^\ell))\,,
t\ge0)$\,. Note that $\Lambda^{n,m}_t(\{0\})=0$\,.  The process
$\Lambda^{n,m}$ takes values in the space $\mathbb M_+(\R_+^\ell)$ of
(nonnegative finite) Borel measures on $\R_+^\ell$\,, which is equipped with the
weak topology and is, therefore, a complete separable metric space, see, 
Tops\oe \cite{Top}.
Accordingly,
$\Lambda^{n,m}$ is a random element of the Skorohod space $\mathbb
D(\R_+,\mathbb M_+(\R_+^\ell))$\,, see, e.g., Ethier and Kurtz
\cite{EthKur86} for the
definition and properties. 
We introduce a number of other spaces. 
Let $\mathbb C(\R_+,\mathbb
M_+(\R_+^\ell))$ denote the set of continuous $\mathbb
M_+(\R_+^\ell)$--valued functions.
Let $\mathbb C^1(\R_+^\ell)$ denote the
set of real--valued functions on $\R_+^\ell$ that can be extended to
functions with continuous derivatives 
defined on an open set containing $\R_+^\ell$\,.
Let $\mathbb
C^1_c(\R_+^\ell)$ denote the subset of $\mathbb C^1(\R_+^\ell)$ of
functions of compact support.

Let, for $u=(u_1,\ldots,u_\ell) \in\R_+^\ell$\,,
$\lfloor nu\rfloor=(\lfloor nu_1\rfloor,\ldots,\lfloor
nu_\ell\rfloor)$ and
\begin{align}
  \label{eq:5}
  \begin{split}
\hat\beta^{n,m,k}(u)=
\beta^{n,m,k}(\lfloor nu\rfloor),\;
\hat\delta^{n,m,k}(u)=
\delta^{n,m,k}(\lfloor nu\rfloor),\;
\hat\mu^{n,m,k}(u)=
\mu^{n,m,k}(\lfloor nu\rfloor),\\
\hat\phi^{n,m}(u)=
\phi^{n,m}(\lfloor nu\rfloor),\;
\hat\epsilon^{n,m}(u)=
m\epsilon^{n,m}(\lfloor nu\rfloor)\,.
  \end{split}
\end{align}
We also define, for $u\in\R_+^\ell$ and  $\Gamma\subset\mathbb R_+^\ell$\,,
\begin{equation}
  \label{eq:32}
\hat \eta^{n,m}(u,\Gamma)=
\sum_{i'}\eta^{n,m}(\lfloor nu\rfloor,
i')\mathbf 1_{\Gamma}\bl(\frac{i'}{n}\br)\,.
\end{equation} (Note that 
$\hat\eta^{n,m}(u,\{i'/n\})=\eta^{n,m}(\lfloor nu\rfloor,i')$\,.)

Let us assume that there exist 
 functions $\hat\beta^k(u)$\,, $\hat\delta^k(u)$ 
and $\hat\mu^k(u)$\,, which belong to 
$\mathbb C^1(\R_+^\ell)$\,, such that
 the functions
 $u_k\hat\delta^k(u)$\,, $u_k\hat\beta^k(u)$\,,
and $u_k\hat\mu^k(u)$ have bounded first order derivatives,
  continuous and bounded functions $\hat\phi(u)$ and
$\hat\epsilon(u)$ and
transition kernel $\hat\varphi(u,du')$\,, which
is a  finite   measure on $\R_+^\ell$   for each $u$\,, with the total mass being
 uniformly bounded over
 $u\in\R_+^\ell$\,,
such that 
$\int_{\R_+^\ell}f(u')\hat\varphi(u,du')$ is a continuous function of $u$
and, as 
$n,m\to\infty$\,,
\begin{multline}
  \label{eq:7}
  \sum_{k=1}^\ell\bl(u_k\abs{\hat\beta^{n,m,k}(u)-
\hat\beta^k(u)}+u_k\abs{
\hat\delta^{n,m,k}(u)-\hat\delta^k(u)}+u_k\abs{
\hat\mu^{n,m,k}(u)-\hat\mu^k(u)}\br)\\+
\abs{\hat\phi^{n,m}(u)-\hat\phi(u)}+
\abs{\hat\epsilon^{n,m}(u)-
\hat\epsilon(u)}
+\abs{
  \int_{\R_+^\ell}f(u')
\hat\phi^{n,m}(u)\hat\eta^{n,m}(u,du')\\-
\int_{\R_+^\ell}f(u')\hat\varphi(u,du')} \to0
\end{multline} uniformly over compact sets of $u$\,,
 for all  continuous bounded functions $f$ of compact support.
It is assumed further that the total population at time $0$\,,
which is 
$\sum_i\sum_{k=1}^\ell
i_kX^{n,m}_0(i)=nm\int_{\R_+^\ell}\abs{u}\Lambda^{n,m}_0(du)$\,, is
finite. (Accordingly, $nm\int_{\R_+^\ell}\abs{u}\Lambda^{n,m}_t(du)$
yields the total population at time $t$\,.)
Let $\abs{u}=u_1+\ldots+u_\ell$\,. 
\begin{theorem}
  \label{the:lln}
  \begin{enumerate}
  \item 
Suppose that,
 for some $\hat \lambda_0=(\hat \lambda_0(du))\in \mathbb 
M_+(\R_+^\ell)$ such that
 $\hat\lambda_0(\R_+^\ell)>0$ and
$\int_{\R_+^\ell}\abs{u}\hat\lambda_0(du)<\infty$\,, 
 we have that
$\Lambda^{n,m}_0\to \hat\lambda_0$   in probability in 
 $ \mathbb M_+(\R_+^\ell)$ and
$\int_{\R_+^\ell}\abs{u}\Lambda^{n,m}_0(du)\to
\int_{\R_+^\ell}\abs{u}\hat\lambda_0(du)$ in probability, 
  as $n,m\to\infty$\,. 
In addition, suppose that
\begin{equation*}
  \lim_{K\to\infty}
\limsup_{n,m\to\infty}
\mathbf P(\sum_i\abs{X^{n,m}_0(i)}^2
>Km^2)=0\,.
\end{equation*}
 Then, the  $\Lambda^{n,m}$ converge in
probability in $\mathbb D(\R_+,
 \mathbb M_+(\R_+^\ell))$   to 
 $(\hat\lambda_t\,, t\ge0)\in\mathbb C(\R_+,
\mathbb M_+(\R_+^\ell))$ such that  
$\hat\lambda_t(\R_+^\ell)>0$\,. It is uniquely specified by
the requirement that, given $f\in \mathbb C^1_c(\R_+^\ell)$\,, 
$\int_{\R_+^\ell}f(u)\hat\lambda_t(du)$ is differentiable
 and 
\begin{multline}
  \label{eq:13}
  \frac{d}{dt}\,\int_{\R_+^\ell}f(u)\hat\lambda_t(du)\\=
\int_{\R_+^\ell}\Bl(\sum_{k=1}^\ell 
\bl(u_k\,(\hat\beta^{k}(u)-\hat\delta^{k}(u)-\hat\mu^k(u))
+\frac{1}{
\hat\lambda_t(\R_+^\ell)}\,\displaystyle\int_{\R_+^\ell} 
u'_k\,\hat\mu^{k}(u')\,\hat\lambda_t(du')\br)
   \partial_{u_k} f(u)
\\+
\int_{\R_+^\ell}
f(u')\,\hat\varphi(u,du')
-(\hat\phi(u)+\hat\epsilon(u)
\hat\lambda_t(\R_+^\ell))f(u)\Br)\hat\lambda_t(du)\,.
\end{multline}
 If    $\hat\lambda_0$ is absolutely continuous  with
respect to Lebesgue measure,  then $\hat\lambda_t$ 
 is absolutely continuous too.
\item In probability, locally uniformly in $t$\,,
\begin{equation*}
        \int_{\R_+^\ell}\abs{u}\Lambda^{n,m}_t(du)
\to \int_{\R_+^\ell}\abs{u}\hat \lambda_t(du)\,.
\end{equation*}
  If, in addition, 
  \begin{equation*}
\int_{\R_+^\ell}u\,\Lambda^{n,m}_0(du)\to
\int_{\R_+^\ell}u\,\hat \lambda_0(du)   
  \end{equation*}
 in probability,
then
\begin{equation*}
       \int_{\R_+^\ell}u\,\Lambda^{n,m}_t(du)
\to \int_{\R_+^\ell}u\,\hat\lambda_t(du)
\end{equation*}
  in
probability locally uniformly in $t$\,.
\item
Suppose that, under the hypotheses of part 1, $\hat\lambda_0$
is absolutely continuous  with
respect to the Lebesgue measure, that its density
$\hat x_0=(\hat x_0(u)\,,u\in\R_+^\ell)$
is a bounded and Lipschitz--continuous function, that
$\hat\varphi (u,du')=\overline \varphi(u,u')\,du'$ with
$\overline \varphi(u,u')$ having Sobolev derivative $D_u\overline\varphi(u,u')$
with respect to $u$ for almost all $u'$ such that
$\esssup_{u\in\R_+^\ell}\int_{\R_+^\ell}
  (\overline\varphi(u,u')+\abs{D_u\overline\varphi(u,u')})\,du'<\infty$\,,
and that the functions 
$\hat\phi(u)$ and $\hat\epsilon(u)$ are  Lipschitz--continuous.
  Then the density
   $\hat x_t=(\hat x_t(u)\,,u\in\R_+^\ell)$ of $\hat\lambda_t$
  is a bounded and Lipschitz--continuous function of $u$, 
 is locally Lipschitz--continuous with
respect to $t$\,,  and, for  almost all $t$ and
  $u$ with respect to the Lebesgue measure,
\begin{multline}
  \label{eq:28}
-\partial_t  \hat x_t(u)
=\sum_{k=1}^\ell 
\Bl(\partial_{u_k}\bl(
\,\hat x_t(u)u_k\,(\hat\beta^{k}(u)-\hat\delta^{k}(u)-\hat\mu^k(u))\br)\\
+\frac{\displaystyle\int_{\R_+^\ell} \hat x_t(u')
u'_k\,\hat\mu^{k}(u')\,du'}{\displaystyle
\int_{\R_+^\ell}\hat x_t(u')\,du'}\,
   \partial_{u_k} \hat x_t(u)\Br)
-
\int_{\R_+^\ell}
\hat x_t(u')\overline\varphi(u',u)\,du'
\\+\hat x_t(u)\hat\phi(u)+\,\hat x_t(u)\hat\epsilon(u)
\int_{\R_+^\ell}\hat x_t(u')\,du'\,.
\end{multline}
  \end{enumerate}
\end{theorem}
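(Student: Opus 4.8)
\emph{Proof strategy.} The plan is to obtain a semimartingale description of $\Lambda^{n,m}$ directly from the balance equations, to derive a priori bounds giving tightness and compact containment in $\mathbb D(\R_+,\mathbb M_+(\R_+^\ell))$, and then to use a well-posedness theory for \eqref{eq:13} both to identify every limit point and to promote tightness to convergence in probability. Fixing $f\in\mathbb C^1_c(\R_+^\ell)$, I first read off from \eqref{eq:6}--\eqref{eq:1} the representation
\begin{equation*}
  \int_{\R_+^\ell}f\,d\Lambda^{n,m}_t
  =\int_{\R_+^\ell}f\,d\Lambda^{n,m}_0
  +\int_0^t\mathcal G^{n,m}_s(f)\,ds+M^{n,m}_t(f),
\end{equation*}
with $\mathcal G^{n,m}_s(f)$ the sum over $i$ and over event types of (event rate)$\,\times\,$(jump of $\int f\,d\Lambda^{n,m}$) and $M^{n,m}_\cdot(f)$ a martingale. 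Taylor expanding the finite differences $f((i\pm e_k)/n)-f(i/n)=\pm n^{-1}\partial_{u_k}f(i/n)+O(n^{-2})$ and using $i_k/n=u_k$ on $\operatorname{supp}f$, $m\epsilon^{n,m}(i)=\hat\epsilon^{n,m}(i/n)$, $X^{n,m,\ast}_t=m\Lambda^{n,m}_t(\R_+^\ell)$ and \eqref{eq:97}, one checks that $\mathcal G^{n,m}_s(f)$ equals the right side of \eqref{eq:13} with $\hat\lambda_t$ replaced by $\Lambda^{n,m}_s$, up to an error vanishing by \eqref{eq:7}. The delicate point is migration: the factor $1-\mathbf 1_{\{\vartheta^k_i=i\}}$ in $\overline M^{k}$ and the restriction $i'\neq i$ in $M^{k}$ each produce a term proportional to $\sum_i\partial_{u_k}f(i/n)\,u_k\hat\mu^{n,m,k}(i/n)\,X^{n,m}_{s-}(i)^2/X^{n,m,\ast}_{s-}$, and these cancel exactly, which removes a potential atomic contribution and lets the mean-field immigration term collapse to $\hat\lambda_t(\R_+^\ell)^{-1}\int u'_k\hat\mu^k\,d\hat\lambda_t$. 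A count of jump sizes and rates gives $\langle M^{n,m}(f)\rangle_t=O(1/m)$, so the martingale disappears in the limit.

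Second, the a priori bounds. The drift of $\Lambda^{n,m}_t(\R_+^\ell)=X^{n,m,\ast}_t/m$ is at most $C\,\Lambda^{n,m}_t(\R_+^\ell)-c\,\Lambda^{n,m}_t(\R_+^\ell)^2$, since fission adds a bounded number of groups at rate $O(X^{n,m,\ast}_t)$ while extinction removes groups at rate $\ge c\,(X^{n,m,\ast}_t)^2/m$ (as $m\epsilon^{n,m}$ is bounded away from $0$); a supermartingale argument bounds $\sup_{t\le T}\Lambda^{n,m}_t(\R_+^\ell)$ in probability, and, the killing rate being bounded and a fission producing at least one group, a Gronwall estimate keeps $\inf_{t\le T}\Lambda^{n,m}_t(\R_+^\ell)$ away from $0$. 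The drift of $\int\abs u\,d\Lambda^{n,m}_t$ is at most $C\,\Lambda^{n,m}_t(\R_+^\ell)$ because $i_k(\beta^{n,m,k}(i)+\delta^{n,m,k}(i))$ is bounded and migration and fission conserve individuals, so $\int\abs u\,d\Lambda^{n,m}_t$ stays $O(1)$ and $\Lambda^{n,m}_t(\{\abs u>R\})\le C/R$ --- the compact containment needed for measure-valued tightness; similarly the hypothesis on $\sum_i\abs{X^{n,m}_0(i)}^2$ propagates, by a Gronwall bound, to $\sum_iX^{n,m}_t(i)^2=O(m^2)$ locally in $t$, which controls the remaining second-order error terms. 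Tightness of $\{\Lambda^{n,m}\}$ in $\mathbb D(\R_+,\mathbb M_+(\R_+^\ell))$ then follows from Jakubowski's criterion: compact containment together with tightness of the real processes $\int f\,d\Lambda^{n,m}$ for $f$ in a convergence-determining class, the latter from Aldous's condition via the drift and quadratic-variation bounds.

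Third, any subsequential limit $\hat\lambda$ is continuous (jumps $O(1/m)$), has $\hat\lambda_t(\R_+^\ell)$ bounded and positive, and, on passing to the limit in the decomposition (using \eqref{eq:7} for the fission/source term and part~2 for $\hat\lambda_t(\R_+^\ell)$ and for $\int u'_k\hat\mu^k\,d\hat\lambda_t$), satisfies \eqref{eq:13} for every $f\in\mathbb C^1_c(\R_+^\ell)$. The heart of the matter is uniqueness for \eqref{eq:13}, which I prove by a fixed-point argument using that the equation is nonlinear only through the scalars $a_t=\hat\lambda_t(\R_+^\ell)$ and $\rho^k_t=a_t^{-1}\int u'_k\hat\mu^k(u')\,d\hat\lambda_t(u')$: for a given continuous $(a_\cdot,\rho_\cdot)$, \eqref{eq:13} is the weak form of a linear equation with velocity field $u\mapsto\bigl(u_k(\hat\beta^k-\hat\delta^k-\hat\mu^k)(u)+\rho^k_t\bigr)_k$ --- globally Lipschitz in $u$ because the products $u_k\hat\beta^k,u_k\hat\delta^k,u_k\hat\mu^k$ have bounded derivatives --- with source kernel $\hat\varphi(u,\cdot)$ and killing rate $\hat\phi(u)+\hat\epsilon(u)a_t$, and such an equation has a unique solution, namely $\hat\lambda_0$ pushed forward along the characteristic flow plus the Duhamel integral of the pushed-forward fission source, weighted by the killing factor along trajectories. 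Reinserting this solution into the definitions of $(a_\cdot,\rho_\cdot)$ gives a self-map of $\mathbb C([0,T_0];\R^{1+\ell})$ that, by the a priori moment bounds and measured in a Kantorovich-type metric controlling the mass and the first moment, is a contraction for small $T_0$; iterating over successive intervals, with the lower bound on $a_t$ keeping the coefficients regular, yields uniqueness on $[0,T]$ and hence convergence in probability. Absolute continuity is preserved because the characteristic flow is a bi-Lipschitz $C^1$-diffeomorphism of $\R_+^\ell$, so maps Lebesgue-null sets to null sets, which one reads off the Duhamel formula; part~2 follows from the same decomposition with $f$ a smooth truncation of $\abs u$, resp.\ of $u$, on letting the truncation level tend to $\infty$ with the uniform first-moment bound.

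Finally, for part~3 I run a separate Banach fixed point for the strong equation \eqref{eq:28} in $\mathbb C([0,T_0];\mathcal X)$, $\mathcal X$ the bounded Lipschitz functions on $\R_+^\ell$: a candidate $\hat x$ determines the velocity field $u\mapsto\bigl(u_k(\hat\beta^k-\hat\delta^k-\hat\mu^k)(u)+a^k_t[\hat x]\bigr)_k$ with $a^k_t[\hat x]=(\int\hat x_t\,du')^{-1}\int u'_k\hat\mu^k(u')\hat x_t(u')\,du'$, the fission source density $v_t[\hat x](u)=\int\overline\varphi(u',u)\hat x_t(u')\,du'$, and the killing rate $\hat\phi(u)+\hat\epsilon(u)\int\hat x_t\,du'$; solving the resulting transport/continuity equation along characteristics by a Duhamel formula returns an element of $\mathcal X$, since the Sobolev bound $\esssup_u\int(\overline\varphi+\abs{D_u\overline\varphi})\,du'<\infty$ makes $v_t[\hat x]$ bounded and Lipschitz and $\hat\phi,\hat\epsilon,\hat x_0$ are Lipschitz. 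This map contracts on a short interval and iterates; its fixed point is bounded, Lipschitz in $u$, locally Lipschitz in $t$, satisfies \eqref{eq:28} a.e., and, written weakly, solves \eqref{eq:13}, so by the uniqueness in part~1 it is the density of $\hat\lambda_t$. The principal obstacle throughout is this well-posedness step: the state space is non-compact, \eqref{eq:13} and \eqref{eq:28} carry a genuinely nonlocal nonlinearity through $\hat\lambda_t(\R_+^\ell)$ and a $\hat\mu$-weighted first moment, and the characteristics are merely globally Lipschitz, so the contraction has to be organized around the a priori moment and positivity bounds rather than appealing to a ready-made PDE result.
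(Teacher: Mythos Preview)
Your outline is correct and tracks the paper's proof closely through the semimartingale decomposition, the a priori bounds on $\Lambda^{n,m}_t(\R_+^\ell)$, on $\int|u|\,d\Lambda^{n,m}_t$, and on $\sum_i X^{n,m}_t(i)^2$, and the use of Jakubowski's criterion for tightness. One small correction: the migration self-exclusions do \emph{not} cancel exactly at the prelimit level; after the index shifts a residual term of the form $\sum_i\bigl(a((i+e_k)/n)-a(i/n)\bigr)(R^{n,m}_s)^{-1}i_k\mu^{n,m,k}(i)X^{n,m}_s(i)^2/m^2$ survives, and it is precisely the propagated bound $\sum_iX^{n,m}_t(i)^2=O(m^2)$ together with the $O(1/n)$ finite difference that kills it. You already have the right tool in hand; just do not claim exact cancellation.

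The genuine divergence is in the uniqueness step. You freeze the scalar nonlinearities $(a_t,\rho_t)$, solve the resulting linear transport-with-source equation by characteristics, and run a contraction on $C([0,T_0];\R^{1+\ell})$. The paper instead compares two measure-valued solutions $\tilde\lambda,\breve\lambda$ directly: it tests \eqref{eq:13} against the time-dependent function $a_s=\tilde U_{s,t}f$, where $\tilde U_{s,t}f=f\circ\tilde\psi_{s,t}$ and $\tilde\psi$ is the characteristic flow built from $\tilde\lambda$ itself, obtaining a Duhamel representation $\langle f,\tilde\lambda_t\rangle=\langle\tilde U_{0,t}f,\hat\lambda_0\rangle+\int_0^t\langle B(\tilde\lambda_s)\tilde U_{s,t}f,\tilde\lambda_s\rangle\,ds$. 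Subtracting the analogous formula for $\breve\lambda$ and estimating $|\tilde\psi_{s,t}-\breve\psi_{s,t}|$ by Gronwall in terms of the bounded-Lipschitz metric $\rho_w(\tilde\lambda_s,\breve\lambda_s)$ gives $\rho_w(\tilde\lambda_t,\breve\lambda_t)\le K\int_0^t\rho_w(\tilde\lambda_s,\breve\lambda_s)\,ds$ on one shot, without any short-time iteration or fixed-point bookkeeping. Your route is sound but requires extra care to keep the frozen mass bounded away from zero through the iteration and to verify Lipschitz dependence of the linear solution on $(a,\rho)$ in the right norm; the paper's direct comparison sidesteps this. For Part~3 the paper likewise avoids a second fixed point: it passes to the adjoint $U^\ast_{s,t}z=(z\circ\psi_{s,t}^{-1})J(\psi_{s,t}^{-1})$, writes $\hat x_t=U^\ast_{0,t}\hat x_0+\int_0^tU^\ast_{s,t}B(\hat\lambda_s)\hat x_s\,ds$, and solves this Volterra equation in $\mathbb W^{1,\infty}(\R_+^\ell)$ by the Neumann series $V_t=\sum_m V^{(m)}_t$ with $\|V^{(m)}_t\|\le K_1K_2^mt^m/m!$, which immediately gives $\hat x_t$ bounded and Lipschitz. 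Both arguments deliver the same conclusion; the paper's are somewhat shorter because they exploit the Duhamel formula directly rather than re-running a contraction.
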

\begin{remark}
  If $\hat\eta^{n,m}(u,du')\to \hat\eta(u,du')$ weakly, as in the next
  example, then one may be able to take
  $\hat\varphi(u,du')=\hat\phi(u)\hat\eta(u,du')$\,. 
\end{remark}
\begin{remark}
  As an example of the scaling, consider 
  fissioning into one or two pieces
that gives equal probability to every possible fission outcome:
\[
\zeta_i(\{i',i-i'\}) = \frac{1}{\prod_{k=1}^\ell (i_k+1)}\,.
\]
Hence, the $\theta_i(i',p)$ assume values in $\{1,2\}$ and
\begin{equation*}
  \eta(i,i') = \frac{2}{\prod_{k=1}^\ell (i_k+1)}
\,.
\end{equation*}
We define   fission kernel $\hat\eta^{n,m}(u,du')$ by
 \eqref{eq:32}.
Let $\phi^{n,m}(i)= \prod_{k=1}^\ell (i_k+1)  e^{-\abs{i}/n} /n^\ell$\,.
Consequently,
$\hat\phi^{n,m}(u)=
\prod_{k=1}^\ell  (\lfloor nu_k\rfloor+1)
e^{-\abs{\lfloor nu\rfloor}/n} /n^\ell$\,.
We have that
 $\hat\eta^{n,m}(u,du')\to2/\bl(\prod_{k=1}^\ell   u_k\br)\,
 \mathbf 1_{[0, u]}(u')\,du'$\,,
$\hat\phi^{n,m}(u)\to
\prod_{k=1}^\ell  u_k
e^{-\abs{u}}$\,, and
 $\hat\phi^{n,m}(u)\hat\eta^{n,m}(u,du')\to\\ 2
e^{-\abs{u}} \mathbf 1_{[0,u]}(u')\,du'
$ weakly uniformly over $u$ from bounded sets. 
 \end{remark}
We now give  a version of Theorem \ref{the:lln} for a stronger
topology.
Let
\begin{equation}
  \label{eq:46}
\check\epsilon^{n,m}(u)=
mn^\ell\epsilon^{n,m}(\lfloor nu\rfloor)\,,
\check\eta^{n,m}(u,u')=
n^\ell\eta^{n,m}(\lfloor nu\rfloor,\lfloor nu'\rfloor)\,.
\end{equation}
The functions $mn^\ell\epsilon^{n,m}(i)$ and 
$\phi^{n,m}(i)n^\ell\eta^{n,m}(i,i')$ are assumed to be bounded in
$n$\,, $m$\,, $i$\,, and $i'$\,.
Let us assume that there exist bounded
 Lipschitz--continuous  functions $\hat\beta^k(u)$\,,
 $\hat\delta^k(u)$\,, and
$\hat\mu^k(u)$\,, bounded continuous functions
 $\hat\phi(u)$ and $\check\epsilon(u)$\,, and function
$\check\varphi(u,u')$
such that
the functions
$u_k\hat\beta^k(u)$, $u_k\hat\delta^k(u)$\,, and 
$u_k\hat\mu^k(u)$ are bounded and Lipschitz--continuous, the function
$\int_{\R_+^\ell}f(u')\check\varphi(u,u')\,du'$ is continuous with
respect to $u$ for any continuous function $f(u')$ of compact support,
$\sup_{u\in\R_+^\ell}\int_{\R_+^\ell}\check\varphi(u,u')\,du'<\infty$\,, and
 for all bounded Borel measurable sets
$\Theta\subset \R_+^\ell$ and continuous functions of compact support $f(u')$\,,
\begin{align*}    
\begin{split}
\int_{\Theta}\Bl(\sum_{k=1}^\ell
\bl(u_k\abs{\hat\beta^{n,m,k}(u)-
\hat\beta^k(u)}+u_k\abs{
\hat\delta^{n,m,k}(u)-\hat\delta^k(u)}
+u_k\abs{
\hat\mu^{n,m,k}(u)-\hat\mu^k(u)}\br)\\+\abs{\hat\phi^{n,m}(u)-\hat\phi(u)}
+\abs{\check\epsilon^{n,m}(u)-
\check\epsilon(u)}
+
\abs{\int_{\R_+^\ell}
f(u')\hat\phi^{n,m}(u)\check\eta^{n,m}(u,u')\,du'\\-
\int_{\R^\ell_+}f(u') \check\varphi(u,u')\,du'}\,du\Br)\to0\,,
\end{split}\end{align*}
as $n,m\to\infty$\,.

Let processes
$      \hat X^{n,m}= ( \hat X^{n,m}_t    \,, t\ge0)$
  be defined by
  \begin{equation*}
\hat X^{n,m}_t=(\hat X^{n,m}_t(u),\, u\in \R_+^\ell) \text{ and }
    \hat X^{n,m}_t(u)=\frac{1}{m}\,X^{n,m}_t(\lfloor nu\rfloor)\,.
  \end{equation*}
We assume that $\hat X_0^{n,m}\in \mathbb L^2(\R_+^\ell)\cap \mathbb
L^1(\R_+^\ell)$\,. Both $\mathbb L^2(\R_+^\ell)$ and $\mathbb
L^1(\R_+^\ell)$ are endowed with the weak topologies. Specifically,
$\mathbb
L^2(\R_+^\ell)$ is endowed with the $\sigma\bl(
 \mathbb L^2,\mathbb 
L^2\br)$--topology and  
$\mathbb L^1(\R_+^\ell)$ is endowed with the $\sigma\bl(
 \mathbb L^1,\mathbb L^\infty\br)$--topology. Both spaces are
 completely regular topological spaces as topological groups.
We endow $\mathbb L^2(\R_+^\ell)\cap \mathbb L^1(\R_+^\ell)$ with the
weakest topology that is stronger than the restrictions of both weak topologies.
It is also a completely regular topological space, cf.,
Engelking \cite{Eng77}.  The Skorohod space
 $\mathbb D(\R_+, \mathbb L^2(\R_+^\ell)\cap \mathbb L^1(\R_+^\ell))$ 
is analysed in 
Jakubowski \cite{Jak86}.
\begin{theorem}
  \label{the:lln2}
  \begin{enumerate}
  \item 
If,
 for some $\check x_0=(\check x_0(u))\in 
 \mathbb L^2(\R^\ell_+)\cap \mathbb L^1(\R_+^\ell)$ with $\check x_0(u)\ge0$\,,
 $\int_{\R_+^\ell}\check x_0(u)\,du>0$\,, and
$\int_{\R_+^\ell}\abs{u}\check x_0(u)\,du<\infty$\,, 
 we have that
$\hat X^{n,m}_0\to \check x_0$ in probability and
$\int_{\R_+^\ell}\abs{u}\hat X^{n,m}_0(u)\,du\to
\int_{\R_+^\ell}\abs{u}\check x_0(u)\,du$ in probability,
  as $n,m\to\infty$\,, and 
\begin{equation*}
  \lim_{K\to\infty}
\limsup_{n,m\to\infty}
\mathbf P(\sum_i\abs{X^{n,m}_0(i)}^2
>Km^2n^\ell)=0\,,
\end{equation*}
then the  $\hat X^{n,m}$ converge in
probability in  $\mathbb D(\R_+,
 \mathbb L^2(\R_+^\ell)\cap
 \mathbb L^1(\R_+^\ell))$   to 
  function $(\check x_t\,, t\ge0)$\,, where 
$\check x_t =(\check x_t(u),\,u\in
\R_+^\ell)$\,, such that  $\check x_t(u)\ge 0$\,, 
$\int_{\R_+^\ell} \check x_t(u)
\,du>0$\,, $\check x_t\in \mathbb L^2(\R_+^\ell)\cap 
\mathbb L^1(\R_+^\ell)$\,, 
   and, for all $t$ and almost all  $u$\,, the measure 
$\hat\lambda_t(du)=\check x_t(u)\,du$ satisfies \eqref{eq:13} with 
$\hat\eta(u)$ and $\hat\epsilon(u)$ replaced with $\check\eta(u)$ and
$\check\epsilon(u)$\,, respectively.
\item In probability, locally uniformly in $t$\,,
\begin{equation*}
        \int_{\R_+^\ell}\abs{u}\hat X^{n,m}_t(u)\,du
\to \int_{\R_+^\ell}\abs{u}\check x_t(u)\,du\,.
\end{equation*}
If, in addition,
\begin{equation*}
\int_{\R_+^\ell}u\,\hat X^{n,m}_0(u)\,du\to
\int_{\R_+^\ell}u\,\check x_0(u)\,du  
\end{equation*}
 in probability,
then
\begin{equation*}
      \int_{\R_+^\ell}u\,\hat X^{n,m}_t(u)\,du
\to \int_{\R_+^\ell}u\,\check x_t(u)\,du
\end{equation*}
  in
probability locally uniformly in $t$\,.
\item If, under the hypotheses of part 1, 
$\check x_0$ is a bounded and Lipschitz--continuous function,
$\check \varphi(u,u')$ 
has Sobolev derivative $D_u\check\varphi(u,u')$
with respect to $u$ for almost all $u'$ such that
$\esssup_{u\in\R_+^\ell}\int_{\R_+^\ell}( \check
  \varphi(u,u')+
  \abs{D_u\check\varphi(u,u')})\,du'<\infty$\,, 
and the functions
  $\hat\phi(u)$ and $\check\epsilon(u)$ are Lipschitz--continuous,
then $\check x_t(u)$ is a bounded and Lipschitz--continuous function
with respect to $u$\,, is locally Lipschitz--continuous with respect
to $t$\,, and \eqref{eq:28} is satisfied for
almost all $t$ and  $u$\,.
  \end{enumerate}
  \end{theorem}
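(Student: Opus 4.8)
The plan is to follow the scheme that establishes Theorem~\ref{the:lln}, but to carry out the tightness argument in the weak topology of $\mathbb L^2(\R_+^\ell)\cap\mathbb L^1(\R_+^\ell)$ rather than in $\mathbb M_+(\R_+^\ell)$. The starting point is again the family of balance equations obtained from \eqref{eq:6}--\eqref{eq:1}: for $f\in\mathbb C^1_c(\R_+^\ell)$ the real--valued processes $\int_{\R_+^\ell}f(u)\hat X^{n,m}_t(u)\,du$ admit a semimartingale decomposition whose drift, after the substitution $i=\lfloor nu\rfloor$ and the scalings \eqref{eq:46}, converges to the right--hand side of \eqref{eq:13} (with $\hat\varphi,\hat\epsilon$ replaced by $\check\varphi,\check\epsilon$) evaluated at the measure $\hat X^{n,m}_t(u)\,du$, and whose predictable quadratic variation is $O(1/m)$ by the boundedness assumptions on $i_k(\beta^{n,m,k}+\delta^{n,m,k}+\mu^{n,m,k})$, $\phi^{n,m}$, $mn^\ell\epsilon^{n,m}$, $\phi^{n,m}n^\ell\eta^{n,m}$ and on the jump sizes $\theta_i^{n,m}(i',p)$, together with the normalisation by $m$. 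This yields condition (J2) of Jakubowski's tightness criterion \cite{Jak86}: for each $f$ in the countable separating class $\mathbb C^1_c$, tightness of $\int f\,\hat X^{n,m}_t\,du$ in $\mathbb D(\R_+,\R)$ follows from an Aldous--type estimate applied to this decomposition.

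The genuinely new ingredient is the compact--containment condition (J1). Under the scaling $\hat X^{n,m}_t(u)=X^{n,m}_t(\lfloor nu\rfloor)/m$ the hypothesis $\limsup_{n,m}\mathbf P(\sum_i|X^{n,m}_0(i)|^2>Km^2n^\ell)=0$ becomes a bound on $\|\hat X^{n,m}_0\|_{\mathbb L^2}^2$; propagating it in time through the balance equations and a Gronwall argument (using again the boundedness of the scaled rates) gives $\limsup_{n,m}\mathbf P(\sup_{t\le T}\|\hat X^{n,m}_t\|_{\mathbb L^2}^2>C_{T,K})$ as small as one likes. Balls of $\mathbb L^2$ are weakly compact; by de la Vall\'ee Poussin an $\mathbb L^2$--bounded family is uniformly integrable, hence (Dunford--Pettis) $\sigma(\mathbb L^1,\mathbb L^\infty)$--relatively compact once one also precludes escape of mass to infinity, which is controlled by the first--moment bound $\sup_{t\le T}\int|u|\hat X^{n,m}_t(u)\,du<\infty$ in probability; the latter is obtained by a Gronwall estimate on $\int|u|\hat X^{n,m}_t(u)\,du$, dominated by the growth of the Yule process. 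Intersecting these pieces produces compact subsets of $\mathbb L^2\cap\mathbb L^1$ that contain the trajectories with probability arbitrarily close to one, and together with (J2) this gives relative compactness of the laws of $\hat X^{n,m}$ in $\mathbb D(\R_+,\mathbb L^2\cap\mathbb L^1)$.

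To identify the limit one passes to the limit in the balance equations along a convergent subsequence. The linear terms converge by the assumed convergence of $\hat\beta^{n,m,k},\hat\delta^{n,m,k},\hat\mu^{n,m,k},\hat\phi^{n,m},\check\epsilon^{n,m}$ and $\hat\phi^{n,m}\check\eta^{n,m}$ (in the $\int_\Theta$ sense) combined with the compact--containment bounds; the delicate terms are the nonlocal ones, $\int u'_k\hat\mu^k(u')\hat X_t(u')\,du'/\int\hat X_t(u')\,du'$ and $\hat\epsilon(u)\int\hat X_t(u')\,du'\,f(u)$, whose passage to the limit requires the convergence of the total mass and of the first moment (established simultaneously by the truncation--plus--Gronwall argument above, which is precisely part~2) together with the strict positivity $\int\check x_t(u)\,du>0$; the latter follows, exactly as in Theorem~\ref{the:lln}, from $mn^\ell\epsilon^{n,m}$ being bounded above, so the total number of groups cannot collapse too fast. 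Any limit point therefore satisfies \eqref{eq:13} with $\check\varphi,\check\epsilon$ in place of $\hat\varphi,\hat\epsilon$, takes values in $\mathbb L^2\cap\mathbb L^1$, and has positive mass; the uniqueness of such a solution, already part of Theorem~\ref{the:lln}, upgrades relative compactness to convergence in probability, which proves parts~1 and~2.

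For part~3 the weak form \eqref{eq:13} is converted into the quasilinear PDE \eqref{eq:28}. Expanding the divergence term, \eqref{eq:28} is a linear transport equation for $\check x_t$ with velocity field $b_k(t,u)=u_k(\hat\beta^k-\hat\delta^k-\hat\mu^k)(u)+\int u'_k\hat\mu^k(u')\check x_t(u')\,du'/\int\check x_t(u')\,du'$, a bounded zeroth--order coefficient coming from $\sum_k\partial_{u_k}(u_k(\hat\beta^k-\hat\delta^k-\hat\mu^k))$, $\hat\phi$ and $\check\epsilon\int\check x_t$, and the nonlocal source $\int\check x_t(u')\check\varphi(u',u)\,du'$. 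Under the stated Lipschitz/Sobolev hypotheses the characteristic flow of $b$ is Lipschitz in $(t,u)$, and the Duhamel representation along characteristics exhibits $\check x_t$ as a bounded Lipschitz function of $u$ with $\mathbb L^\infty$-- and Lipschitz--norms growing at most exponentially, and locally Lipschitz in $t$; the self--consistency of the scalar coefficients $\int u'_k\hat\mu^k\check x_t\,du'/\int\check x_t\,du'$ and $\int\check x_t\,du'$ with the very solution is settled by a contraction--mapping argument on $\mathbb C([0,T];\mathbb L^1\cap\mathbb L^\infty)$ for small $T$ followed by iteration, and uniqueness from part~1 identifies it with $\check x_t$. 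Finally, testing \eqref{eq:13} against $f\in\mathbb C^1_c$, integrating the transport term by parts and using the regularity just obtained gives \eqref{eq:28} for almost every $(t,u)$. I expect the main obstacle to be the compact--containment step (J1): one must propagate, uniformly in $n,m$ and through the coupled system \eqref{eq:6}--\eqref{eq:1}, both the $\mathbb L^2$ bound (for weak $\mathbb L^2$--precompactness and, via uniform integrability, $\mathbb L^1$--precompactness against concentration) and the first absolute moment (against escape of mass to infinity) — this is where the modified scaling $Km^2n^\ell$ and the boundedness of $mn^\ell\epsilon^{n,m}$ and $\phi^{n,m}n^\ell\eta^{n,m}$ are essential — with a secondary difficulty in keeping the denominator $\int\check x_t\,du$ bounded away from zero uniformly, without which the limiting transport field and sink term are not even well defined.
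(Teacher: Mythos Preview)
Your proposal is correct and follows essentially the same route as the paper: Jakubowski's criterion with compact containment supplied by a propagated $\mathbb L^2$ bound (the analogue of Lemma~\ref{le:dyra} under the $m^2n^\ell$ scaling) together with a first--moment bound (the analogue of Lemma~\ref{le:yule}) to preclude escape of mass, (J2) from the semimartingale decomposition whose martingale part vanishes via Lenglart--Rebolledo, identification of the limit by passing to the limit in the balance equations and then invoking the characteristics/Gronwall uniqueness argument already carried out for Theorem~\ref{the:lln}.

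One small simplification relative to your sketch of part~3: there is no need for a separate self--consistency contraction argument. Since part~1 already determines $\check x_t$ uniquely, the scalar coefficients $\int u'_k\hat\mu^k\check x_t\,du'/\int\check x_t\,du'$ and $\int\check x_t\,du'$ are known functions of $t$; the paper simply freezes them, defines the flow $\psi_{s,t}$ and the adjoint $U^\ast_{s,t}$ (bounded on $\mathbb W^{1,\infty}$ because the coefficients are Lipschitz), writes the Duhamel/Neumann series $V_tz=\sum_m V^{(m)}_tz$ in $\mathbb W^{1,\infty}$, and identifies $V_t\check x_0$ with $\check x_t$ in $\mathbb L^1$ by uniqueness. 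This is equivalent to your Picard iteration but avoids having to close a nonlinear fixed point.
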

\section{Proof of Theorem \ref{the:lln} }
\label{sec:proof-theorem-}
The proof of Theorem \ref{the:lln}
 proceeds by establishing compactness of $\Lambda^{n,m}$ and
ascertaining the limit point. Techniques of stochastic calculus are used extensively.
Throughout the section, the hypotheses of part 1 of 
Theorem \ref{the:lln} are assumed to hold.
We begin with a lemma on the properties of fission.
Let $b$ denote an upper bound on the  number of offspring in a fission.
\begin{lemma}
\label{le:conserv}
We have that
\begin{equation}
\label{eq:90}   \hat\eta^{n,m}(u,\R_+^\ell)\le b
\end{equation}
and \begin{equation}
\label{eq:90a}\int_{\R_+^\ell}u'\hat\eta^{n,m}(u,du')
=\frac{\lfloor nu\rfloor}{n}\,.
\end{equation}
As a result, 
\begin{equation}
  \label{eq:16}
  \hat\varphi(u,\R_+^\ell)\le b\hat\phi(u)
\end{equation}
and
\begin{equation}
  \label{eq:91}
    \int_{\R_+^\ell}u'\hat\varphi(u,du')=u\hat\phi(u)\,.
\end{equation}
\end{lemma}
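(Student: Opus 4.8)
The plan is to deduce everything from the two facts about fission that are built into the model: the expected-offspring identity \eqref{eq:97}, $\mathbf E\theta^{n,m}_i(i',p)=\eta^{n,m}(i,i')$, and the conservation of individuals under fission, $\sum_{j\in\pi_i}j=i$ for every partition $\pi_i$ of $i$. First I would prove \eqref{eq:90}. By definition \eqref{eq:32}, $\hat\eta^{n,m}(u,\R_+^\ell)=\sum_{i'}\eta^{n,m}(\lfloor nu\rfloor,i')=\sum_{i'}\sum_{\pi_{\lfloor nu\rfloor}}\pi_{\lfloor nu\rfloor}(i')\,\zeta^{n,m}_{\lfloor nu\rfloor}(\pi_{\lfloor nu\rfloor})$. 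Interchanging the two (finite) sums and using $\sum_{i'}\pi_i(i')=\abs{\pi_i}$, the number of parts in the partition, this equals the expected number of offspring groups in a fission of a $\lfloor nu\rfloor$-group, which is at most $b$ since the number of offspring in any fission is bounded by $b$; hence \eqref{eq:90}.

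Next, \eqref{eq:90a}. Compute $\int_{\R_+^\ell}u'\,\hat\eta^{n,m}(u,du')=\sum_{i'}\frac{i'}{n}\,\eta^{n,m}(\lfloor nu\rfloor,i')$ using the remark that $\hat\eta^{n,m}(u,\{i'/n\})=\eta^{n,m}(\lfloor nu\rfloor,i')$. Write $\eta^{n,m}(\lfloor nu\rfloor,i')=\sum_{\pi}\pi(i')\zeta^{n,m}_{\lfloor nu\rfloor}(\pi)$, interchange the finite sums, and observe that $\sum_{i'}i'\,\pi(i')=\sum_{j\in\pi}j=\lfloor nu\rfloor$ by conservation of individuals, independently of $\pi$; since $\sum_\pi\zeta^{n,m}_{\lfloor nu\rfloor}(\pi)=1$ this gives $\sum_{i'}i'\,\eta^{n,m}(\lfloor nu\rfloor,i')=\lfloor nu\rfloor$, and dividing by $n$ yields \eqref{eq:90a}.

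Finally I would pass to the limit to obtain \eqref{eq:16} and \eqref{eq:91}. The measure $\hat\varphi(u,du')$ is characterized through \eqref{eq:7} as the weak limit of $\hat\phi^{n,m}(u)\hat\eta^{n,m}(u,du')$ tested against continuous compactly supported $f$. For \eqref{eq:16}, take an increasing sequence of such $f$ with $0\le f\le 1$ and $f\uparrow 1$ on $\R_+^\ell$: for each fixed $f$, $\int f(u')\hat\varphi(u,du')=\lim_{n,m}\hat\phi^{n,m}(u)\int f(u')\hat\eta^{n,m}(u,du')\le\lim_{n,m}\hat\phi^{n,m}(u)\,b=b\hat\phi(u)$ using \eqref{eq:90} and the convergence $\hat\phi^{n,m}(u)\to\hat\phi(u)$ from \eqref{eq:7}; letting $f\uparrow 1$ and using monotone convergence on the left gives $\hat\varphi(u,\R_+^\ell)\le b\hat\phi(u)$. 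For \eqref{eq:91} one argues componentwise: the hard part is that $u'_k$ is not bounded, so one cannot test directly with $f(u')=u'_k$. The plan is to use a truncation $f_R(u')=u'_k\,\chi_R(u')$ with $\chi_R$ a continuous cutoff equal to $1$ on $[0,R]^\ell$ and supported in $[0,2R]^\ell$; then $\int f_R\,\hat\varphi(u,du')=\lim_{n,m}\hat\phi^{n,m}(u)\int f_R(u')\hat\eta^{n,m}(u,du')$, and the tail $\int(u'_k-f_R(u'))\hat\eta^{n,m}(u,du')$ is controlled uniformly in $n,m$ because \eqref{eq:90a} bounds the full first moment $\int u'\,\hat\eta^{n,m}(u,du')=\lfloor nu\rfloor/n\le u$ while \eqref{eq:90} bounds the total mass, so a standard uniform-integrability estimate (mass outside $[0,R]^\ell$ times the bound on the moment, or Markov's inequality applied to the $n,m$-uniform moment bound) lets the tails vanish as $R\to\infty$ uniformly in $n,m$; combined with $\hat\phi^{n,m}(u)\int u'\hat\eta^{n,m}(u,du')=\hat\phi^{n,m}(u)\lfloor nu\rfloor/n\to\hat\phi(u)\,u$, this yields $\int_{\R_+^\ell}u'\,\hat\varphi(u,du')=u\,\hat\phi(u)$. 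The main obstacle is precisely this interchange of the limits in $n,m$ and $R$ for the unbounded test function $u'_k$; everything else is a bookkeeping manipulation of finite sums over partitions.
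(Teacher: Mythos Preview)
Your arguments for \eqref{eq:90} and \eqref{eq:90a} are correct and essentially coincide with the paper's (the paper phrases the same computation via $\mathbf E\theta^{n,m}_i(i',p)=\eta^{n,m}(i,i')$ rather than summing over partitions directly, but it is the same finite--sum manipulation).

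For \eqref{eq:16} your monotone--convergence argument works. For \eqref{eq:91}, however, your uniform--integrability step has a genuine gap: a uniform bound on the \emph{first} moment $\int u'_k\,\hat\eta^{n,m}(u,du')\le u_k$ together with a uniform mass bound does \emph{not} force the first--moment tails to vanish uniformly in $n,m$. (Take, e.g., $\mu_n=n^{-1}\delta_n$ on $\R_+$: mass $1/n$, first moment $1$, but $\int_{(R,\infty)}x\,d\mu_n=1$ for all $n>R$.) Neither ``mass outside $[0,R]^\ell$ times the moment bound'' nor Markov's inequality closes this; you would need a higher moment or some tightness of the family $\{u'_k\,\hat\eta^{n,m}(u,du')\}$.

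The paper's route supplies exactly the missing ingredient, and it makes the whole truncation discussion unnecessary: every piece in a partition of $i$ satisfies $\abs{i'}\le\abs{i}$, so $\eta^{n,m}(i,i')=0$ whenever $\abs{i'}>\abs{i}$. Consequently $\hat\eta^{n,m}(u,\cdot)$ is supported on the fixed compact set $\{u':\abs{u'}\le\abs{u}\}$ for every $n,m$. Choosing any $f\in\mathbb C_c(\R_+^\ell)$ that agrees with $1$ (respectively with $u'_k$) on that set, \eqref{eq:7} gives $\int f\,\hat\varphi(u,du')=\lim\hat\phi^{n,m}(u)\int f\,\hat\eta^{n,m}(u,du')$, and the right side equals $\hat\phi^{n,m}(u)\,\hat\eta^{n,m}(u,\R_+^\ell)$ (respectively $\hat\phi^{n,m}(u)\,\lfloor nu_k\rfloor/n$) by \eqref{eq:90}--\eqref{eq:90a}; passing to the limit yields \eqref{eq:16} and \eqref{eq:91} at once. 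So the fix is a one--line support observation rather than an integrability estimate.
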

\begin{proof}
By the analogue of \eqref{eq:97},
\begin{equation*}
 \sum_{i'} \eta^{n,m}(i,i')=\sum_{i'}\mathbf E \theta^{n,m}_i(i')=
\mathbf E\sum_{i'} \theta^{n,m}_i(i')\,.
\end{equation*}
The latter sum is the total number of pieces, so, it 
does not exceed $b$\,. Similarly,
\begin{equation*}
  \sum_{i'}i' \eta^{n,m}(i,i')=
\mathbf E\sum_{i'}i' \theta_i^{n,m}(i')\,,
\end{equation*}
the latter sum being equal to $i$\,.
Representations \eqref{eq:90} and \eqref{eq:90a} now follow from  
\eqref{eq:32}.
Since $\hat\eta^{n,m}(u,du')=0$ when $\abs{u'}>\abs{u}$\,,
\eqref{eq:16} and \eqref{eq:91} follow from \eqref{eq:7},
\eqref{eq:90} and \eqref{eq:90a}.
\end{proof}
\begin{remark}
Similarly,   $\hat\eta^{n,m}(u,\R_+^\ell)= b$ 
and
$  \hat\varphi(u,\R_+^\ell)= b\hat\phi(u)\,,
$ provided every fission produces exactly $b$ offspring.
If, in addition, $\hat\eta^{n,m}(u,du')\to\hat\eta(u,du')$ weakly, then
these relations carry over to $\hat\eta(u,du')$\,.
\end{remark}
Let $\mathcal{F}^{n,m}_t$ represent the complete $\sigma$--algebra
that is generated by the random variables 
$X_0^{n,m}(i)$\,, $L^{B,n,m,k}_s(i,p,r)$\,,
 $L^{D,n,m,k}_s(i,p,r)$\,,
 $L^{\overline M,n,m,k}_s(i,p,r)$\,, 
 $L^{\overline F,n,m}_s(i,p)$\,, 
 $L^{ E,n,m}_s(i,p)$\,,
$B^{n,m,k}_s(i)$\,, $D^{n,m,k}_s(i)$\,, 
$\overline M^{n,m, k}_s(i)$\,, $M^{n,m, k}_s(i)$\,, and
$F^{n,m}_s(i,i')$\,, where $i\in\mathbb Z_+^\ell\setminus\{0\}$\,,
 $p\in\N$\,, $r\in\N$\,, $k\in\{1,\ldots,\ell\}$\,, and $0\le s\le t$\,, and let $\mathbf
 F^{n,m}=(\mathcal{F}^{n,m}_t\,,t\ge0)$ represent the associated filtration.
Let us adopt the convention that $0/0=0$\,, that the analogues of 
the processes on the
lefthand side of \eqref{eq:6} are equal to zero 
when $i=0$ and define
\begin{align}
  \label{eq:18}
  \begin{split}
                     N_t^{B,n,m,k}(i)=B_t^{n,m,k}(i)-\int_0^tX^{n,m}_s(i)i_k\beta^{n,m,k}(i)\,ds\,,\\
N_t^{D,n,m,k}(i)=D_t^{n,m,k}(i)-\int_0^tX^{n,m}_s(i)i_k\delta^{n,m,k}(i)\,ds\,,
\\
N^{F,n,m}_t(i',i)=F^{n,m}_t(i',i)-\int_0^t
X^{n,m}_s(i')\phi^{n,m}(i')\eta^{n,m}(i',i)
\,ds\,,\\
N^{\overline F,n,m}_t(i)=\overline{F}^{n,m}_t(i)-\int_0^t
X^{n,m}_s(i)\phi^{n,m}(i)\,ds\,,\\
N^{E,n,m}_t(i)=E^{n,m}_t(i)-\int_0^t
X^{n,m}_s(i)X_s^{n,m,\ast}\epsilon^{n,m}(i)\,ds\,,\\
    N_t^{M,n,m,k}(i)=M_t^{n,m,k}(i)-
\int_0^t\sum_{i'\not=i}
X^{n,m}_s(i')i_k'\mu^{n,m,k}(i')
\frac{X^{n,m}_s(i)}{X_s^{n,m,\ast}}\,ds\,,
\\N_t^{\overline{M},n,m,k}(i)=\overline
M_t^{n,m,k}(i)-\int_0^tX^{n,m}_s(i)i_k\mu^{n,m,k}(i)\bl(1-\frac{X^{n,m}_s(i)}{X_s^{n,m,\ast}}\br)
\,ds\,.
  \end{split}
 \end{align}
We note that   the righthand sides  are equal to zero
after the time when $X^{n,m,\ast}_t$ hits zero.
Let 
\begin{equation*}
  \alpha^{n,m}_i(i')=\mathbf E\theta^{n,m}_i(i',1)^2\,.
\end{equation*}
We note that 
\begin{equation}
  \label{eq:104}
  \alpha^{n,m}_i(i')\le b \eta^{n,m}(i,i')\,.
\end{equation}
By  the analogues of \eqref{eq:97} and
\eqref{eq:6}, and by Lemma \ref{le:split}  in
the appendix, the processes on the righthand  sides of \eqref{eq:18} are
locally square integrable martingales, whose
 predictable quadratic variation processes are as follows, see, e.g.,
 Liptser and Shiryayev \cite{lipshir} for the corresponding definitions,
 \begin{align}
   \label{eq:28a}
        \begin{split}
           \langle N^{B,n,m,k}(i)\rangle_t
=\int_0^tX^{n,m}_s(i)i_k\beta^{n,m,k}(i)\,ds\,,\;
\langle
N^{D,n,m,k}(i)\rangle_t=\int_0^tX^{n,m}_s(i)i_k\delta^{n,m,k}(i)\,ds\,,
\\
\langle N^{E,n,m}(i)\rangle_t=\int_0^t
X^{n,m}_s(i)X_s^{n,m,\ast}\epsilon^{n,m}(i)\,ds\,,\;
\langle N^{\overline F,n,m}(i)\rangle_t=\int_0^t
X^{n,m}_s(i)\phi^{n,m}(i)\,ds\,,
\\  \langle N^{F,n,m}(i',i)\rangle_t=
    \int_0^t
X^{n,m}_s(i')\phi^{n,m}(i')\alpha^{n,m}_{i'}(i)
\,ds\,,\\
\langle N^{M,n,m,k}(i)\rangle_t=\int_0^t\sum_{i'\not=i}
X^{n,m}_s(i')i_k'\mu^{n,m,k}(i')
\frac{X^{n,m}_s(i)}{X_s^{n,m,\ast}}\,ds\,,
\\
\langle
N^{\overline{M},n,m,k}(i)\rangle_t=\int_0^tX^{n,m}_s(i)i_k\mu^{n,m,k}(i)
\bl(1-
\frac{X^{n,m}_s(i)}{X_s^{n,m,\ast}}\br)
\,ds\,.
  \end{split}
\end{align}
The nonzero predictable covariance processes are
\begin{align}\label{eq:66}
  \begin{split}
          \langle N^{M,n,m,k}(i),N^{\overline M,n,m,k}(i')\rangle_t=&
(1-\mathbf 1_i(i'))\int_0^tX^{n,m}_s(i')
i'_k\mu^{n,m,k}(i')
\frac{X^{n,m}_s(i)}{X_s^{n,m,\ast}}\,ds\,,
\\  \langle  N^{F,n,m}(i,i'),N^{\overline F,n,m}(i)\rangle_t
=&\int_0^tX^{n,m}_s(i)\phi^{n,m}(i)\eta^{n,m}(i,i')
\,ds\,,\\
\langle
N^{F,n,m}(i,i'),N^{F,n,m}(i,j')\rangle_t
=&\int_0^tX^{n,m}_s(i)\phi^{n,m}(i)
\mathbf E\theta^{n,m}_i(i',1)\theta^{n,m}_i(j',1)
\,ds\,.
  \end{split}
\end{align}

Let
\begin{equation}
  \label{eq:19}
  R^{n,m}_t=\frac{1}{m}\,X_t^{n,m,\ast}=\Lambda^{n,m}_t(\R_+^\ell)\,,
\end{equation}
so,
$R^{n,m}_0\to R_0=\hat\lambda_0(\R_+^\ell)>0$ 
in probability, as
$n,m\to\infty$\,.
The processes  $(R^{n,m}_t\,,t\ge0)$ are random
 elements of $\mathbb D(\R_+,\R)$\,.
Let us recall that a sequence of  stochastic processes with
trajectories in a Skorohod space is said to be 
$C$--tight if it is tight
for convergence in distribution in the Skorohod space
and the limit points are laws of continuous path processes, see, e.g.,
Jacod and Shiryaev \cite{jacshir}.
\begin{lemma}
  \label{le:numberofgroups}
 The sequence of processes $(R^{n,m}_t\,,t\ge0)$ is $C$--tight
and, given $t>0$\,, there exists $\rho>0$ such that
$\mathbf P(\inf_{s\le t}R^{n,m}_s>\rho)\to1$\,, as $n,m\to\infty$\,.
\end{lemma}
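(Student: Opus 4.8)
The plan is to derive a closed balance equation for $R^{n,m}_t$ from the recursions \eqref{eq:6}--\eqref{eq:1}, exhibit it as a semimartingale, and then control the two terms separately. Summing \eqref{eq:1} over all $i$ and using the conservation of individuals under fission (the $B^k$ and $M^k$ terms telescope, since $\sum_i \Delta B^k_t(i-e_k)=\sum_i\Delta B^k_t(i)$ and likewise for deaths and migrations, so they all cancel), only the genuinely group-number-changing events survive: fission contributes $\sum_{i'}\Delta F_t(i',i)-\Delta\overline F_t(i)$, which summed over $i$ gives (number of offspring minus one) per fission, and extinction contributes $-\Delta E_t(i)$. Hence, after dividing by $m$ and using \eqref{eq:18},
\begin{equation}
  \label{eq:Rbalance}
  R^{n,m}_t = R^{n,m}_0
  + \int_0^t \frac1m\sum_i X^{n,m}_s(i)\,\phi^{n,m}(i)\bl(\hat\eta^{n,m}(i/n,\R_+^\ell)-1\br)\,ds
  - \int_0^t \frac1m\sum_i X^{n,m}_s(i)\,X^{n,m,\ast}_s\,\epsilon^{n,m}(i)\,ds
  + \tilde N^{n,m}_t,
\end{equation}
where $\tilde N^{n,m}_t = \frac1m\sum_i\bl(\sum_{i'}N^{F,n,m}_t(i',i)-N^{\overline F,n,m}_t(i)-N^{E,n,m}_t(i)\br)$ is a locally square-integrable martingale. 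I would rewrite the drift in terms of $R^{n,m}_s$: using $\hat\phi^{n,m}$ bounded (by hypothesis $\phi^{n,m}(i)$ is bounded), $\hat\eta^{n,m}(u,\R_+^\ell)\le b$ from \eqref{eq:90}, and $\hat\epsilon^{n,m}$ bounded, the first integrand is bounded in absolute value by $C R^{n,m}_s$ and the extinction integrand equals $R^{n,m}_s$ times something between $\underline\epsilon\, R^{n,m}_s$ and $\bar\epsilon\, R^{n,m}_s$ (here $\underline\epsilon>0$ because the $m\epsilon^{n,m}(i)$ are bounded away from zero). So \eqref{eq:Rbalance} has the form of a perturbed logistic equation, $dR^{n,m}_t = a^{n,m}_t\,dt + d\tilde N^{n,m}_t$ with $-c_1 R^{n,m}_t - c_2(R^{n,m}_t)^2 \le a^{n,m}_t \le C R^{n,m}_t$, which in particular keeps $R^{n,m}_t$ bounded above on compact time intervals in probability.

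For $C$-tightness I would invoke the Aldous--Rebolledo criterion (Jacod--Shiryaev \cite{jacshir}): one needs (i) compact containment of $R^{n,m}_t$ for each fixed $t$ and (ii) smallness of the increments of the finite-variation part and of the predictable quadratic variation $\langle\tilde N^{n,m}\rangle$ over short intervals. Compact containment and the modulus estimate for the finite-variation part both follow from the drift bound above together with a Gronwall-type argument controlling $\sup_{s\le t} R^{n,m}_s$ (using $R^{n,m}_0\to R_0$ in probability). For the martingale part, the predictable quadratic variations in \eqref{eq:28a} and covariances in \eqref{eq:66} give $\langle\tilde N^{n,m}\rangle_t \le \frac{1}{m^2}\int_0^t\sum_i X^{n,m}_s(i)\bl(C\phi^{n,m}(i) + X^{n,m,\ast}_s\epsilon^{n,m}(i)\br)\,ds \le \frac{C}{m}\int_0^t(R^{n,m}_s + (R^{n,m}_s)^2)\,ds$, where the extra factor $1/m$ (one power of $m$ from the $1/m^2$ prefactor, one power absorbed into $X^{n,m,\ast}_s/m = R^{n,m}_s$) forces $\langle\tilde N^{n,m}\rangle_t\to0$ in probability, giving both $\tilde N^{n,m}\to0$ and the increment smallness needed for tightness; $C$-tightness of $R^{n,m}$ follows since $R^{n,m}_0$ converges and jumps of $R^{n,m}$ are $O(b/m)\to0$.

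For the second assertion — a uniform positive lower bound $\rho$ on $\inf_{s\le t}R^{n,m}_s$ — the idea is a stochastic comparison from below. On the event where $\sup_{s\le t}R^{n,m}_s\le K$ (high probability by the above), the extinction drift integrand is at least $-\bar\epsilon K R^{n,m}_s$ and the fission drift may be negative but is bounded below by $-c_1 R^{n,m}_s$ (nonproper fissions could in principle decrease the count only if $\hat\eta^{n,m}(u,\R_+^\ell)<1$, but in any case it is $\ge -c_1 R^{n,m}_s$), so on this event $R^{n,m}_t \ge R^{n,m}_0 - c_3 K\int_0^t R^{n,m}_s\,ds + \tilde N^{n,m}_t$; comparing with the linear SDE $d\underline R_s = -c_3 K\,\underline R_s\,ds + d\tilde N^{n,m}_s$, $\underline R_0 = R^{n,m}_0$, whose solution is $e^{-c_3 K s}(R^{n,m}_0 + \int_0^s e^{c_3 K r}d\tilde N^{n,m}_r)$, and using that $R^{n,m}_0 \to R_0 > 0$ in probability while the stochastic integral is a martingale with quadratic variation $\le e^{2c_3Kt}\langle\tilde N^{n,m}\rangle_t\to0$ in probability, Doob's inequality gives that $\inf_{s\le t}\underline R_s \ge \frac12 e^{-c_3 K t} R_0 =: \rho > 0$ with probability tending to $1$. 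I expect the main obstacle to be making the lower comparison rigorous despite the drift coefficient itself depending (through $R^{n,m}$ and the ratios $X^{n,m}_s(i)/X^{n,m,\ast}_s$) on the very process being bounded; the clean way around this is to first establish the upper bound $\sup_{s\le t}R^{n,m}_s\le K$ with high probability, then run the comparison on that event where all the coefficients are genuinely bounded, so that a standard linear-SDE comparison applies — and to note that a localization argument (stopping at the first time $R^{n,m}$ exits $[\rho/2,2K]$) removes any circularity.
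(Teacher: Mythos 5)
Your derivation of the balance equation for $R^{n,m}$ contains a concrete error: the claim that the death and migration terms ``all cancel'' when \eqref{eq:1} is summed over $i$ is false. For deaths one gets
$\sum_{i\in\mathbb Z_+^\ell\setminus\{0\}}\bigl(\Delta D^{k}_t(i+e_k)-\Delta D^{k}_t(i)\bigr)=-\Delta D^{k}_t(e_k)$,
because the shifted index $i+e_k$ never equals $e_k$: when the sole member of a singleton group dies, the group disappears, and this is a genuine group-number-changing event. (The same computation leaves a residual $-\Delta \overline M^{k}_t(e_k)$ for emigrations from singleton groups.) The paper's total-group-count equation \eqref{eq:27} and the resulting decomposition \eqref{eq:24} keep the terms $\Delta D^{n,m,k}_t(e_k)$ and the associated martingales $N^{D,n,m,k}(e_k)$ throughout; your equation omits them, so the drift and the martingale you subsequently control are not the full drift and martingale part of $R^{n,m}$. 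The omission is repairable rather than fatal: the missing terms are nonpositive (so your upper bound is unaffected) and are bounded in absolute value by a constant times $X^{n,m}_s(e_k)/m\le R^{n,m}_s$, since $i_k\delta^{n,m,k}(i)$ and $i_k\mu^{n,m,k}(i)$ are bounded, so the lower-bound comparison goes through with a larger constant — but the identity as you state it is wrong and must be corrected before anything downstream is justified.

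A second, smaller issue is the circularity in your tightness/upper-bound step, which you acknowledge only in passing: Gronwall for $\sup_{s\le t}R^{n,m}_s$ requires $\sup_{s\le t}\abs{\tilde N^{n,m}_s}$ to be small, while your bracket bound $\langle\tilde N^{n,m}\rangle_t\le \frac{C}{m}\int_0^t\bigl(R^{n,m}_s+(R^{n,m}_s)^2\bigr)\,ds$ already presupposes control of $R^{n,m}$. The localization you mention (stopping when $R^{n,m}$ exits a compact) does resolve this and should be carried out explicitly in the upper-bound step, not deferred to the lower-bound discussion; the paper instead dominates $R^{n,m}$ by the fission-only process $\overline R^{n,m}$ of \eqref{eq:21}, proves a second-moment bound for it via It\^o's formula, stopping and Fatou (see \eqref{eq:14}), and deduces \eqref{eq:30} together with the vanishing bracket. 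Apart from these two points your route coincides with the paper's: the positive lower bound comes from the integrating-factor (linear-comparison) identity driven by the same martingale — to make the comparison rigorous, multiply by $e^{K_3 t}$ and use nonnegativity of the resulting integrand, which is exactly the manipulation in \eqref{eq:99}--\eqref{eq:88} — with the martingale terms killed by Lenglart--Rebolledo, and $C$--tightness follows from the bounded drift on the high-probability event, the vanishing martingale, and the $O(1/m)$ jumps.
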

\begin{proof}
  By the analogue of \eqref{eq:1},
\begin{equation}
  \label{eq:27}
      \Delta X_t^{n,m,\ast}
=\sum_{i}
\sum_{i'}\Delta F^{n,m}_t(i',i)
-\sum_{k=1}^\ell\Delta D^{n,m,k}_t(e_k)-\sum_{i}\Delta \overline{F}^{n,m}_t(i)
\\ -\sum_{i}\Delta
E_t^{n,m}(i)\,.
\end{equation}
 Since $\sum_{i}
\Delta F^{n,m}_t(i',i)\le b \Delta \overline F^{n,m}_t(i')$\,,
\begin{equation*}
      \Delta X_t^{n,m,\ast}
\le b\sum_{i}\Delta \overline{F}^{n,m}_t(i)
\\ -\sum_{i}\Delta
E_t^{n,m}(i)\,.
\end{equation*}
Hence,
\begin{equation}
  \label{eq:23}
  \Delta R^{n,m}_t\le\Delta\overline R^{n,m}_t \,,
\end{equation}
where
\begin{equation}
  \label{eq:21}
  \overline R^{n,m}_t=R^{n,m}_0+
\frac{b}{m}\,\sum_{i} \overline{F}^{n,m}_t(i)
-\,\frac{1}{m}\,\sum_{i} E_t^{n,m}(i)\,.
\end{equation}
Let $
  N^{\overline R,n,m}_t(i)=(bN^{\overline F,n,m}_t(i)-N^{E,n,m}_t(i))/m
$ and $  N^{\overline R,n,m}_t=\sum_{i}  N^{\overline R,n,m}_t(i)$ so that by  \eqref{eq:18},
\begin{equation}
  \label{eq:22}
  \overline R^{n,m}_t=R^{n,m}_0+\int_0^t\sum_{i}\,
\frac{1}{m}\, X_s^{n,m}(i)\bl(b\phi^{n,m}(i)-
X^{n,m,\ast}_s\epsilon^{n,m}(i)\br)\,ds+
\,N^{\overline R,n,m}_t\,.
\end{equation}
 Since the processes $N^{\overline F,n,m}(i)=(N^{\overline F,n,m}_t(i)\,,t\ge0)$ and
$N^{E,n,m}(i)=(N^{E,n,m}_t(i)\,,t\ge0)$ are locally square integrable
martingales with  disjoint jumps, it
follows by  \eqref{eq:28a} that the process
$N^{\overline R,n,m}=(N^{\overline R,n,m}_t\,,t\ge0)$ is a locally square integrable martingale with
the predictable quadratic variation process
\begin{equation}
  \label{eq:25}
  \langle N^{\overline R,n,m}\rangle _t=
\int_0^t\sum_{i}\frac{1}{m^2}
X^{n,m}_s(i)(b^2\phi^{n,m}(i)+X^{n,m,\ast}_s
\epsilon^{n,m}(i))\,ds\,.
\end{equation}
By \eqref{eq:18},  \eqref{eq:21}, \eqref{eq:22}, and the It\^o formula for
semimartingales, see, e.g., Theorem 1 on p.118 in
 Liptser and Shiryayev \cite{lipshir}, on taking into account that 
the processes $(\overline F^{n,m}_t\,,t\ge0)$ and 
$(E^{n,m}_t\,,t\ge0)$ have unit jumps,
\begin{multline*}
  (\overline R^{n,m}_t)^2
=(  R^{n,m}_0)^2+\int_0^t2 \overline R^{n,m}_{s-}d
 \overline R^{n,m}_s+\frac{1}{m^2}
\sum_{s\le t}(b^2\sum_{i}(\Delta
 \overline{F}^{n,m}_s(i))^2+
\sum_{i}(\Delta E_s^{n,m}(i))^2)\\
=( R^{n,m}_0)^2+\int_0^t2 \overline R^{n,m}_{s}
\sum_{i}\,
\frac{1}{m}\, X_s^{n,m}(i)\bl(b\phi^{n,m}(i)-
R^{n,m}_sm\epsilon^{n,m}(i)\br)\,ds\\+
\int_0^t2\overline R^{n,m}_{s-}dN^{R,n,m}_s+\frac{1}{m^2}
\sum_{i}(b^2 \overline{F}^{n,m}_t(i)+
 E_t^{n,m}(i))\\
\le( R^{n,m}_0)^2+\int_0^t2\overline R^{n,m}_{s}
\sum_{i}\,
\frac{b}{m}\, X_s^{n,m}(i)\phi^{n,m}(i)\,ds
\\+
\int_0^t2\overline R^{n,m}_{s-}dN^{R,n,m}_s+\frac{1}{m^2}
\sum_{i}(b^2 \overline{F}^{n,m}_t(i)+
 E_t^{n,m}(i))\,.
\end{multline*}
Hence, on recalling  \eqref{eq:18}, \eqref{eq:19}, and \eqref{eq:23},
by $m\epsilon^{n,m}(i)$ and 
$\phi^{n,m}(i)$ being bounded, there exists $K_0>0$ such that, for all $t>0$\,,
\begin{multline}
  \label{eq:38}
    (\overline R^{n,m}_t)^2\le ( R^{n,m}_0)^2
+\frac{K_0}{m}\,t
+K_0\int_0^t (\overline R^{n,m}_s)^2\,ds
+
\int_0^t2\overline R^{n,m}_{s-}dN^{\overline R,n,m}_s\\+\frac{1}{m^2}
\sum_{i}(N^{\overline F,n,m}_t(i)+N^{E,n,m}_t(i))\,.
\end{multline}
Let, for $K_1>0$\,,
\begin{equation*}
  \tau_{K_1}=\inf\{s\ge0:\,\overline R^{n,m}_s>K_1\}\,.
\end{equation*}
Then, by $R^{n,m}_0$ being $\mathcal{F}^{n,m}_0$--measurable and by
$N^{\overline R,n,m}$\,, $N^{\overline F,n,m}(i)$ and $N^{E,n,m}(i)$ being
locally square integrable
martingales, whose predictable quadratic variation processes are bounded for
$t\le\tau_{K_1}$ by \eqref{eq:28a} and \eqref{eq:25}, so that the local martingale on the righthand side
of \eqref{eq:38} stopped at $\tau_{K_1}$ is a martingale,  we have that
\begin{equation*}
  \mathbf E
\bl(\int_0^{t\wedge \tau_{K_1}}2\overline R^{n,m}_{s-}dN^{\overline R,n,m}_s\\+\frac{1}{m^2}
\sum_{i}(N^{\overline F,n,m}_{t\wedge \tau_{K_1}}(i)
+N^{E,n,m}_{t\wedge \tau_{K_1}}(i))\br)
=0\,.
\end{equation*}
 By \eqref{eq:38} and Gronwall's inequality, for  $K_2>0$\,,
  \begin{equation*}
    \mathbf E  (\overline R^{n,m}_{t\wedge \tau_{K_1}})^2\mathbf 1_{[0,K_2]}(R^{n,m}_0)
\le (K_2^2
+\frac{K_0}{m}\,t)
e^{K_0t}\,.
  \end{equation*}
Letting $K_1\to\infty$ implies, by Fatou's lemma, that
\begin{equation}
  \label{eq:14}
      \mathbf E  (\overline R^{n,m}_{t})^2\mathbf 1_{[0,K_2]}(R^{n,m}_0)\le
( K_2^2
+\frac{K_0}{m}\,t)
e^{K_0t}\,.
\end{equation}
By \eqref{eq:25}, \eqref{eq:14}, the $\phi^{n,m}(i)$ and
$m\epsilon^{n,m}(i)$ being bounded,   for $\gamma>0$\,,
\begin{equation}
  \label{eq:35}
    \lim_{n,m\to\infty}
\mathbf P(\langle N^{\overline R,n,m}\rangle _t>\gamma)=0\,,
\end{equation}
so, by
the
Lenglart--Rebolledo inequality, see, e.g.,
Theorem 3 on p.66 in Liptser and Shiryayev
\cite{lipshir},  in probability, 
\begin{equation*}
    \lim_{m,n\to\infty}\sup_{s\le
    t}\abs{N^{\overline R,n,m}_s}=0\,.
\end{equation*}
By  \eqref{eq:19}, \eqref{eq:22}, and Gronwall's inequality, for some $K'>0$\,,
\begin{equation*}
\overline  R^{n,m}_t\le (R^{n,m}_0+\sup_{s\le t}\abs{N^{\overline R,n,m}_s})e^{K't}\,.
\end{equation*}
It follows that
\begin{equation}
  \label{eq:30}
   \lim_{K\to\infty} \limsup_{n,m\to\infty}
\mathbf P(\sup_{s\le t}\overline R^{n,m}_s>K)=0\,.
\end{equation}
By  \eqref{eq:18} and \eqref{eq:27}, 
\begin{multline}
  \label{eq:24}
  R^{n,m}_t=
R^{n,m}_0
+\int_0^t\sum_{i}\,
\frac{X_s^{n,m}(i)}{m}\, \bl(\sum_{i'}\eta^{n,m}(i,i')\phi^{n,m}(i)
-\phi^{n,m}(i)-
X^{n,m,\ast}_s\epsilon^{n,m}(i)\br)\,ds\\
-\sum_{k=1}^\ell \int_0^t\frac{X^{n,m}_s(e_k)}{m}
\,\delta^{n,m,k}(e_k)\,ds
-\sum_{k=1}^\ell \frac{N^{D,n,m,k}_t(e_k)}{m}\,+
\frac{N^{R,n,m}_t}{m}\,,
\end{multline}
where $(N^{R,n,m}_t\,,t\ge0)$ is a locally square integrable
martingale
with the predictable quadratic variation process
\begin{equation*}
    \langle N^{R,n,m}\rangle_t=
\sum_i\int_0^tX^{n,m}_s(i)
\bl( \mathbf E\bl(\sum_{i'}\theta^{n,m}_i(i',1)-1\br)^2\phi^{n,m}(i)
+X^{n,m,\ast}_s\epsilon^{n,m}(i)\br)\,ds\,.
\end{equation*}
Since $\sum_{i'}\theta_i^{n,m}(i',1)\le b$\,, by \eqref{eq:25} and
\eqref{eq:35},  we have that
$\langle N^{R,n,m}\rangle_t/m^2\to$ in probability, as $n,m\to\infty$\,,
so, in probability,
\begin{equation}
  \label{eq:102}
  \lim_{n,m\to\infty}\frac{1}{m}\,\sup_{s\le t}\abs{N^{R,n,m}}=0\,.
\end{equation}
Hence,  for  arbitrary $K_3>0$\,,
\begin{multline}
  \label{eq:99}
      R^{n,m}_t-\frac{N^{R,n,m}_t}{m}+
\sum_{k=1}^\ell \frac{N^{D,n,m,k}_t(e_k)}{m}
=
R^{n,m}_0-K_3\int_0^t(R^{n,m}_s-
\frac{N^{R,n,m}_s}{m}\\+\sum_{k=1}^\ell \frac{N^{D,n,m,k}_s(e_k)}{m})
\,ds+
\int_0^t\bl(K_3R^{n,m}_s-\sum_{k=1}^\ell \frac{X^{n,m}_s(e_k)}{m}
\,\delta^{n,m,k}(e_k)\\
+\sum_{i}\,
\frac{X_s^{n,m}(i)}{m}\, (\sum_{i'}\eta^{n,m}(i,i')\phi^{n,m}(i)-\phi^{n,m}(i)-
X^{n,m,\ast}_s\epsilon^{n,m}(i))-K_3\frac{N^{R,n,m}_s}{m}\\
+K_3\sum_{k=1}^\ell \frac{N^{D,n,m,k}_s(e_k)}{m}\br)\,ds\,.
\end{multline}
By \eqref{eq:99} and the fact that 
$\sum_{i'}\eta^{n,m}(i,i')\le b$ according to Lemma \ref{le:conserv},
 if $\sup_{s\le t}R^{n,m}_s\\\le K$\, then, solving for the lefthand
side of \eqref{eq:99} and  picking  $K_3$  great
enough so that
\begin{multline*}
 K_3R^{n,m}_s-\sum_{k=1}^\ell \frac{X^{n,m}_s(e_k)}{m}
\,\delta^{n,m,k}(e_k)+\sum_{i}\,
\frac{X_s^{n,m}(i)}{m}\, (\sum_{i'}\eta^{n,m}(i,i')\phi^{n,m}(i)\\-\phi^{n,m}(i)-
X^{n,m,\ast}_s\epsilon^{n,m}(i))\ge0 
\end{multline*}
  when $0\le s\le t$\,, 
\begin{multline}
  \label{eq:88}
    R^{n,m}_t-\frac{N^{R,n,m}_t}{m}+\sum_{k=1}^\ell \frac{N^{D,n,m,k}_t(e_k)}{m}
=
e^{-K_3t}R^{n,m}_0+
e^{-K_3t}\int_0^te^{K_3s}\bl(K_3R^{n,m}_s\\
-\sum_{k=1}^\ell \frac{X^{n,m}_s(e_k)}{m}
\,\delta^{n,m,k}(e_k)+\sum_{i}\,
\frac{X_s^{n,m}(i)}{m}\, (\sum_{i'}\eta^{n,m}(i,i')\phi^{n,m}(i)\\-\phi^{n,m}(i)-
X^{n,m,\ast}_s\epsilon^{n,m}(i))
-K_3\frac{N^{R,n,m}_s}{m}+K_3\sum_{k=1}^\ell \frac{N^{D,n,m,k}_s(e_k)}{m}
\br)\,ds\\
\ge 
e^{-K_3t}R^{n,m}_0-\sup_{s\le t}\frac{\abs{N^{R,n,m}_s}}{m}
-\sum_{k=1}^\ell \sup_{s\le t}\frac{\abs{N^{D,n,m,k}_s(e_k)}}{m}\,.
\end{multline}
 By  \eqref{eq:28a}, \eqref{eq:19},
 \eqref{eq:30}, the $\delta^{n,m,k}(e_k)$ being
 bounded   and the
Lenglart--Rebolledo inequality, in probability, for $k=1,\ldots,\ell$\,,
\begin{equation*}
  \lim_{m,n\to\infty}\sup_{s\le
    t}\frac{1}{m}\,\abs{N^{D,n,m,k}_t(e_k)}=0\,,
\end{equation*}
which implies, by \eqref{eq:102},  \eqref{eq:88}, and  $\hat\lambda_0(\R_+^\ell)$
being positive, that the $R^{n,m}_t$ are  locally
uniformly asymptotically separated away from zero in probability.
In addition, by \eqref{eq:24} and the 
$m\epsilon^{n,m}(i)$ being bounded,
\begin{equation*}
  \lim_{\sigma\to0}\limsup_{n,m\to\infty}
\mathbf P\bl(\sup_{s,s'\in[0,t]:\,\abs{s-s'}\le \sigma}\abs{
    R^{n,m}_{s}- R^{n,m}_{s'}}>\gamma\br)=0\,,
\end{equation*}
so, $ R^{n,m}$ is $C$--tight.

\end{proof}
Let us introduce  
$\hat\beta^{n,m}(u)=(\hat\beta^{n,m,1}(u),\ldots,\hat\beta^{n,m,\ell}(u))$ and
$\hat\delta^{n,m}(u)=(\hat\delta^{n,m,1}(u),\ldots,\linebreak\hat\delta^{n,m,\ell}(u))$\,, and let $\cdot$
denote the inner product in $\R^\ell$\,.
\begin{lemma}
  \label{le:yule}
The sequence of 
processes $\bl(\int_{\R_+^\ell} \abs{u}\Lambda^{n,m}_t(du)\,, t\ge0\br)$ is $C$--tight.
\end{lemma}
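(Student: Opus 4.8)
We outline the argument. Denote $Q^{n,m}_t=\int_{\R_+^\ell}\abs{u}\,\Lambda^{n,m}_t(du)=\frac1{nm}\sum_i\abs{i}\,X^{n,m}_t(i)$, the scaled total population, and recall $R^{n,m}_t$ from \eqref{eq:19}. The plan is the usual route to $C$--tightness: represent $Q^{n,m}$ as an absolutely continuous process plus a local martingale, extract from Gronwall/Lenglart--Rebolledo estimates a stochastic bound on $\sup_{s\le t}Q^{n,m}_s$ and a modulus-of-continuity bound for the drift, and show the martingale part is asymptotically negligible. Multiplying \eqref{eq:1} by $\abs{i}$ and summing over $i$, all contributions of migrations and of fissions cancel, since neither event creates or destroys individuals, and only births, deaths and extinctions survive; compensating the resulting jump processes via \eqref{eq:18} gives
\begin{equation*}
  Q^{n,m}_t=Q^{n,m}_0+\int_0^t a^{n,m}_s\,ds+\frac1{nm}\,M^{n,m}_t,
\end{equation*}
where
\begin{equation*}
  a^{n,m}_s=\frac1{nm}\sum_{i,k}X^{n,m}_s(i)\,i_k\bl(\beta^{n,m,k}(i)-\delta^{n,m,k}(i)\br)
  -\frac1{nm}\sum_i\abs{i}\,X^{n,m}_s(i)\,X^{n,m,\ast}_s\,\epsilon^{n,m}(i)
\end{equation*}
and $M^{n,m}_t=\sum_{i,k}\bl(N^{B,n,m,k}_t(i)-N^{D,n,m,k}_t(i)\br)-\sum_i\abs{i}\,N^{E,n,m}_t(i)$ is a locally square integrable $\mathbf{F}^{n,m}$--martingale; as the driving Poisson primitives have no common jumps, \eqref{eq:28a} yields
\begin{multline*}
  \langle M^{n,m}\rangle_t=\int_0^t\Bl(\sum_{i,k}X^{n,m}_s(i)\,i_k\bl(\beta^{n,m,k}(i)+\delta^{n,m,k}(i)\br)\\
  +\sum_i\abs{i}^2X^{n,m}_s(i)\,X^{n,m,\ast}_s\,\epsilon^{n,m}(i)\Br)\,ds\,.
\end{multline*}

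Next we record the estimates. By the standing boundedness assumptions on the rate functions and on $m\epsilon^{n,m}(i)$, there is a constant $C$, independent of $n$ and $m$, with $\abs{a^{n,m}_s}\le C R^{n,m}_s\bl(1+Q^{n,m}_s\br)$; since the extinction part of $a^{n,m}_s$ is nonpositive, this gives the one-sided bound $Q^{n,m}_t\le Q^{n,m}_0+C\int_0^tR^{n,m}_s\,ds+\frac1{nm}\sup_{s\le t}\abs{M^{n,m}_s}$. Moreover, using the elementary inequality $\sum_i\abs{i}^2X^{n,m}_s(i)\le\bl(\max_{i:\,X^{n,m}_s(i)>0}\abs{i}\br)\sum_i\abs{i}X^{n,m}_s(i)$ together with the same rate bounds,
\begin{equation*}
  \frac1{n^2m^2}\,\langle M^{n,m}\rangle_t\le\frac C{nm}\int_0^tR^{n,m}_s\,ds+C\,G^{n,m}_t\int_0^tR^{n,m}_sQ^{n,m}_s\,ds,\qquad
  G^{n,m}_t:=\frac1{nm}\max_{s\le t}\;\max_{i:\,X^{n,m}_s(i)>0}\abs{i}\,.
\end{equation*}

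The crucial point is that $G^{n,m}_t\to0$ in probability. Since the per capita birth rates are bounded, the total within--group birth rate in an $i$--group is at most $Cn$, so a group's size grows by at most $Cnt$ over $[0,t]$, while fissions only split groups; hence every group alive by time $t$ descends from a time--$0$ group whose size it exceeds by at most $Cnt$, so $G^{n,m}_t\le\frac1{nm}\max_{i:\,X^{n,m}_0(i)>0}\abs{i}+Ct/m$. A single $i$--group present at time $0$ with $\abs{i}/n>L$ contributes $\abs{i}/(nm)$ to $\int_{\abs{u}>L}\abs{u}\,\Lambda^{n,m}_0(du)$; and the hypothesis that $\int_{\R_+^\ell}\abs{u}\,\Lambda^{n,m}_0(du)\to\int_{\R_+^\ell}\abs{u}\,\hat\lambda_0(du)<\infty$ (which is genuinely stronger than weak convergence of $\Lambda^{n,m}_0$) forces $\int_{\abs{u}>L}\abs{u}\,\Lambda^{n,m}_0(du)$ to converge in probability to $\int_{\abs{u}>L}\abs{u}\,\hat\lambda_0(du)$ for almost every $L$, a quantity that tends to $0$ as $L\to\infty$. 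Hence $\frac1{nm}\max_{i:\,X^{n,m}_0(i)>0}\abs{i}\to0$ in probability, and therefore $G^{n,m}_t\to0$ in probability.

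Finally we assemble the pieces. By Lemma \ref{le:numberofgroups}, $\sup_{s\le t}R^{n,m}_s$ is stochastically bounded. Localizing at the first time $Q^{n,m}$ leaves a large interval, so that the right side of the quadratic variation bound is controlled, and applying the Lenglart--Rebolledo inequality, one obtains, after letting the localization level tend to infinity, that $\sup_{s\le t}Q^{n,m}_s$ is stochastically bounded; with this in hand the right side of the quadratic variation bound tends to $0$ in probability (because $1/(nm)\to0$ and $G^{n,m}_t\to0$), so by Lenglart--Rebolledo again $\frac1{nm}\sup_{s\le t}\abs{M^{n,m}_s}\to0$ in probability. Thus $\frac1{nm}M^{n,m}\to0$ locally uniformly in probability, hence is $C$--tight; $A^{n,m}_t:=\int_0^ta^{n,m}_s\,ds$ is absolutely continuous with $\abs{A^{n,m}_s-A^{n,m}_{s'}}\le\abs{s-s'}\sup_{r\le t}\abs{a^{n,m}_r}$ and $\sup_{r\le t}\abs{a^{n,m}_r}$ stochastically bounded, hence $A^{n,m}$ is $C$--tight; and $Q^{n,m}_0$ converges in probability, hence is $C$--tight as a sequence of constant processes. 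Being a sum of $C$--tight sequences, $(Q^{n,m}_t\,,t\ge0)$ is $C$--tight. The genuinely delicate step is the extinction martingale: its predictable quadratic variation involves the second moment $\sum_i\abs{i}^2X^{n,m}_s(i)$, which $Q^{n,m}$ alone does not control, and correspondingly the jump of $Q^{n,m}$ at an extinction can a priori be of order one; the estimate $G^{n,m}_t\to0$ --- resting on boundedness of the per capita birth rates and on the convergence $\int_{\R_+^\ell}\abs{u}\,\Lambda^{n,m}_0(du)\to\int_{\R_+^\ell}\abs{u}\,\hat\lambda_0(du)$ rather than on mere weak convergence of $\Lambda^{n,m}_0$ --- is exactly what removes it.
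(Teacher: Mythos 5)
Your overall skeleton is the same as the paper's: using \eqref{eq:12} and \eqref{eq:18} you write $Q^{n,m}_t=\int_{\R_+^\ell}\abs{u}\Lambda^{n,m}_t(du)$ as initial value plus an absolutely continuous drift plus compensated birth/death/extinction terms, obtain stochastic boundedness of $\sup_{s\le t}Q^{n,m}_s$ from the one--sided (birth--only) bound via localization, Gronwall and Lenglart--Rebolledo (the paper's \eqref{eq:26}--\eqref{eq:20}), and then control increments through the drift bound and the negligibility of the martingale parts. Your extra ingredient --- the running maximal rescaled group size $G^{n,m}_t\to0$, used to dominate the second--moment term $\sum_i\abs{i}^2X^{n,m}_s(i)X^{n,m,\ast}_s\epsilon^{n,m}(i)$ in the predictable quadratic variation of the $\abs{i}$--weighted extinction martingale --- addresses a point the paper passes over very quickly (around \eqref{eq:62}, where its $N^{E,n,m}$ is written without the weight $\abs{i}$), and your time--zero argument for it, combining weak convergence of $\Lambda^{n,m}_0$ with convergence of $\int_{\R_+^\ell}\abs{u}\Lambda^{n,m}_0(du)$ to rule out macroscopic initial groups, is correct.

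The gap is in propagating that bound in time. The step ``the total within--group birth rate in an $i$--group is at most $Cn$, so a group's size grows by at most $Cnt$ over $[0,t]$'' is not valid: a bound on a stochastic intensity yields no deterministic pathwise bound on the number of events --- the number of births in a given lineage on $[0,t]$ is a Poisson--type count that exceeds any multiple of its mean with positive probability, and you then take a maximum over all lineages, so $G^{n,m}_t\le \frac1{nm}\max_{i:\,X^{n,m}_0(i)>0}\abs{i}+Ct/m$ does not follow. (Two further lapses: under the standing assumptions it is $i_k\beta^{n,m,k}(i)$ that is bounded, so the per--group birth rate is bounded by a constant, not by $Cn$; and immigration also increases group sizes and must be counted --- its per--group rate is bounded as well, since the total emigration rate is at most $CX^{n,m,\ast}_s$ and the target group is chosen uniformly among the $X^{n,m,\ast}_s$ groups.) The claim itself is true and is easily repaired stochastically: the total number of birth and immigration events in the entire population on $[0,t]$ has compensator at most $C\int_0^tX^{n,m,\ast}_s\,ds=Cm\int_0^tR^{n,m}_s\,ds$, which is stochastically of order $m$ by Lemma \ref{le:numberofgroups}, hence by the Lenglart--Rebolledo inequality the total count is $O_{\mathbf P}(m)$; since no single group can gain more individuals than the total number of such events, $G^{n,m}_t\le\frac1{nm}\max_{i:\,X^{n,m}_0(i)>0}\abs{i}+O_{\mathbf P}(1/n)\to0$ in probability. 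With that substitution your argument goes through.
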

\begin{proof}
  By the analogue of \eqref{eq:1},
\begin{align*}
  \sum_{i}i_k\Delta X^{n,m}_t(i)=
\sum_{i}\Delta B^{n,m,k}_t(i)-\sum_{i}\Delta D^{n,m,k}_t(i)-
\sum_{i}i_k\Delta E^{n,m}_t(i)\,,
\end{align*}
so,
\begin{multline}
  \label{eq:12}
    \sum_{i}\sum_{k=1}^\ell i_k X^{n,m}_t(i)= \sum_{i}
\sum_{k=1}^\ell i_k X^{n,m}_0(i)+
\sum_{i}\sum_{k=1}^\ell B^{n,m,k}_t(i)\\
-\sum_{i}\sum_{k=1}^\ell  D^{n,m,k}_t(i)
-\sum_{i}\abs{i} E^{n,m}_t(i)\,.
\end{multline}
Therefore, on recalling \eqref{eq:2} and \eqref{eq:18},
\begin{multline}
  \label{eq:26}
  \int_{\R_+^\ell}\abs{u}\Lambda^{n,m}_t(du)\le
\int_{\R_+^\ell}\abs{u}\Lambda^{n,m}_0(du)
+\int_0^t
\int_{\R_+^\ell}u\cdot\hat\beta^{n,m}(u)
\Lambda^{n,m}_s(du)\,ds\\+\frac{1}{mn}\, N^{B,n,m}_t\,,
\end{multline}
where
\begin{equation*}
  N^{B,n,m}_t=
\sum_{i}\sum_{k=1}^\ell N^{B,n,m,k}_t(i)\,.
\end{equation*}
By \eqref{eq:28a}, 
the process $N^{B,n,m}=(N^{B,n,m}_t\,,t\ge0)$ is a locally square
integrable  martingale
with the predictable quadratic variation process
\begin{multline}
  \label{eq:10}
\langle
N^{B,n,m}\rangle_t=\int_0^t\sum_{i}\sum_{k=1}^\ell i_k\beta^{n,m,k}(i)
X^{n,m}_s(i)\,ds=
nm\int_0^t\int_{\R_+^\ell}u\cdot\hat\beta^{n,m}(u)\Lambda^{n,m}_s(du)\,ds
\,. 
\end{multline}
Let $\Theta^{n,m}_K$ represent the event that 
$\int_{\R_+^\ell}\abs{u}\Lambda^{n,m}_0(du)\le K$\,, where $K>0$\,,
and let, for $K_1>0$\,,
\begin{equation*}
    \tau_{K_1}=\inf\{t\ge0:\,
\int_0^t\int_{\R_+^\ell}u\cdot \hat\beta^{n,m}(u)\Lambda^{n,m}_s(du)\,ds>K_1\} \,.
\end{equation*}
The process $(N^{B,n,m}_{t\wedge \tau_{K_1}}\,,t\ge0)$ being a
martingale implies
  by \eqref{eq:26} that\\
$  \mathbf E\mathbf 1_{\Theta^{n,m}(K)}
\int_{\R_+^\ell}\abs{u}\Lambda^{n,m}_{t\wedge \tau_{K_1}}(du)$
is finite, so
 by the $\hat\beta^{n,m,k}(u)$ being bounded, 
provided $K$ is great enough,
\begin{equation*}
  \mathbf E\mathbf 1_{\Theta^{n,m}(K)}
  \int_{\R_+^\ell}\abs{u}\Lambda^{n,m}_{t\wedge \tau_{K_1}}(du)\le
  K
+K\int_0^{t}
\mathbf E\mathbf 1_{\Theta^{n,m}(K)}\int_{\R_+^\ell}
\abs{u}
\Lambda^{n,m}_{s\wedge \tau_{K_1}}(du)\,ds\,.
\end{equation*}
By Gronwall's inequality and Fatou's lemma,
\begin{equation*}
  \mathbf E\mathbf 1_{\Theta^{n,m}(K)}
  \int_{\R_+^\ell}\abs{u}\Lambda^{n,m}_{t}(du)
\le Ke^{Kt}\,.
\end{equation*}
Since, for $K_2>0$\,,
\begin{equation*}
  \mathbf P(\int_{\R_+^\ell}\abs{u}\Lambda^{n,m}_{t}(du)>K_2)\le
1-\mathbf P(\Theta^{n,m}_K)+\frac{Ke^{Kt}}{K_2}\,,
\end{equation*}
we have that
\begin{equation*}
\lim_{K_2\to\infty}\limsup_{n,m\to\infty}
  \mathbf P(\int_{\R_+^\ell}\abs{u}\Lambda^{n,m}_{t}(du)>K_2)\le
\limsup_{n,m\to\infty}(1-\mathbf P(\Theta^{n,m}_K))\,,
\end{equation*}
which implies that the lefthand side equals zero by $K$ being
arbitrary and by the fact that
$\int_{\R^\ell_+}\abs{u}\Lambda^{n,m}_0(du)\to
\int_{\R^\ell_+}\abs{u}\hat\lambda_0(du)$ in probability.
By \eqref{eq:10}, for $\gamma>0$\,,
\begin{equation*}
 \lim_{n,m\to\infty}\mathbf P\bl( \frac{1}{m^2n^2}\langle
N^{B,n,m}\rangle_t>\gamma\br)=0\,,
\end{equation*}
so, by the Lenglart--Rebolledo inequality,
\begin{equation}
  \label{eq:62}
 \lim_{n,m\to\infty}\mathbf P\bl( \frac{1}{mn}\sup_{s\le t}\abs{
N^{B,n,m}_s}>\gamma\br)=0\,.
\end{equation}
By \eqref{eq:26} and Gronwall's inequality, for $K$ great enough,
\begin{equation*}
  \sup_{s\le t}\int_{\R_+^\ell}\abs{u}\Lambda^{n,m}_s(du)\le
\bl(\int_{\R_+^\ell}\abs{u}\Lambda^{n,m}_0(du)+
\sup_{s\le t}\frac{1}{mn}\,\abs{ N^{B,n,m}_t}\br)e^{Kt}\,.
\end{equation*}
By \eqref{eq:62}, 
 \begin{equation}
  \label{eq:20}
    \lim_{K_2\to\infty}\limsup_{n,m\to\infty}
  \mathbf P\bl(\sup_{s\le t}\int_{\R_+^\ell}\abs{u}\Lambda^{n,m}_s(du)>K_2\br)=0\,.
\end{equation}
By \eqref{eq:18} and \eqref{eq:12}, in analogy with \eqref{eq:26},  for $s\le t$\,,
\begin{multline*}
\abs{ \int_{\R_+^\ell}\abs{u}  \Lambda^{n,m}_{t}(du)
-\int_{\R_+^\ell}\abs{u}\Lambda^{n,m}_s(du)}\\
\le
\int_s^t
\int_{\R_+^\ell}\bl(
 u\cdot(\hat\beta^{n,m}(u)+\hat\delta^{n,m}(u))
+\abs{u}\hat\epsilon^{n,m}(u) R^{n,m}_{\tilde s}\br)\Lambda^{n,m}_{\tilde s}(du)\,d\tilde s\\+\frac{1}{mn}\,
\abs{ (N^{B,n,m}_{t}-N^{B,n,m}_s)+ 
(N^{D,n,m}_{t}-N^{D,n,m}_s)+ 
(N^{E,n,m}_{t}-N^{E,n,m}_s)}\\
\le
(t-s)
\sup_{\tilde s\in[s,t]}\int_{\R_+^\ell}\bl(
 u\cdot(\hat\beta^{n,m}(u)+\hat\delta^{n,m}(u))
+\abs{u}\hat\epsilon^{n,m}(u) R^{n,m}_{\tilde s}\br)\Lambda^{n,m}_{\tilde s}(du)\\+\frac{1}{mn}\,
\abs{ (N^{B,n,m}_{t}-N^{B,n,m}_s)+ 
(N^{D,n,m}_{t}-N^{D,n,m}_s)+ 
(N^{E,n,m}_{t}-N^{E,n,m}_s)}\,,
\end{multline*}
where
\begin{equation*}
  N^{D,n,m}_{\tilde t}=
\sum_{i}\sum_{k=1}^\ell N^{D,n,m,k}_{\tilde t}(i)\,,\quad
N^{E,n,m}_{\tilde t}=\sum_i N^{E,n,m}_{\tilde t}(i)\,.
\end{equation*}
Similarly to \eqref{eq:62}, on recalling \eqref{eq:28a},
  \begin{equation*}
  \lim_{n,m\to\infty}\mathbf P\bl(\sup_{t\le L}
\frac{\abs{N^{D,n,m}_t}+\abs{N^{E,n,m}_t}}{mn}>\gamma\br)=0\,.
\end{equation*}

Hence, invoking \eqref{eq:20} once again and Lemma \ref{le:numberofgroups},
\begin{equation*}
    \lim_{\chi\to0}\limsup_{n,m\to\infty}
\mathbf P\bl(
\sup_{s,t\le L,\,\abs{t-s}<\chi}\abs{
  \int_{\R_+^\ell}\abs{u}(\Lambda^{n,m}_{t}(du)-\Lambda^{n,m}_s(du))}>\gamma\br)=0\,.
\end{equation*}
\end{proof}
We develop more semimartingale decompositions.
 Rearranging in the analogue of \eqref{eq:1}, accounting for
\eqref{eq:6} and \eqref{eq:18} and 
assuming that $X^{n,m}_s(i-e_k)=0$ when  $i_k=0$\,,
yields, for $i\in\mathbb Z_+^\ell$\,,
\begin{multline}
  \label{eq:52a}  X^{n,m}_t(i)-X^{n,m}_0(i)=
\int_0^t\Bl(\sum_{k=1}^\ell \bl(-X^{n,m}_s(i)i_k(\beta^{n,m,k}(i)
+\delta^{n,m,k}(i))
\\+X^{n,m}_s(i-e_k)(i_k-1)\beta^{n,m,k}(i-e_k)+
X^{n,m}_s(i+e_k)(i_k+1)\delta^{n,m,k}(i+e_k)
\\-X^{n,m}_s(i)i_k\mu^{n,m,k}(i)+X^{n,m}_s(i+e_k)(i_k+1)\mu^{n,m,k}(i+e_k)\\
-\sum_{i'\not=i}
X^{n,m}_s(i')i_k'\mu^{n,m,k}(i')
\frac{X^{n,m}_s(i)}{X_s^{n,m,\ast}}
+\sum_{i'\not=i-e_k}
X^{n,m}_s(i')i_k'\mu^{n,m,k}(i')
\frac{X^{n,m}_s(i-e_k)}{X_s^{n,m,\ast}}\br)
\\
-X^{n,m}_s(i)\phi^{n,m}(i)+\sum_{i'}X^{n,m}_s(i')\phi^{n,m}(i')\eta^{n,m}(i',i)
\\
-
X^{n,m}_s(i)X_s^{n,m,\ast}
\epsilon^{n,m}(i)\Br)\,ds+N_t^{n,m}(i)\,,
\end{multline}
where
\begin{align}
  \label{eq:57a}
  \begin{split}
     N_t^{n,m}(i)=
     \sum_{k=1}^\ell \bl( - N_t^{B,n,m,k}(i)- N^{D,n,m,k}_t(i)
+ N^{B,n,m,k}_{t}(i-e_k)
+
N^{D,n,m,k}_t(i+e_k)\\
- N^{\overline M,n,m,k}_t(i)
- N^{M,n,m,k}_t(i)
+ N^{\overline M,n,m,k}_t(i+e_k)
+ N^{M,n,m,k}_{t}(i-e_k)
\br)\\
- N^{\overline{F},n,m}_t(i)+\sum_{i'} N^{F,n,m}_t(i',i)- N^{E,n,m}_t(i)\,.
  \end{split}
\end{align}

Hence,  the predictable quadratic variation process of
$N^{n,m}(i)=(N_t^{n,m}(i)\,,t\ge0)$ is as follows
\begin{multline}
  \label{eq:75a}
      \langle N^{n,m}(i)\rangle_t=
\sum_{k=1}^{\ell}\bl(
\langle N^{B,n,m,k}(i)\rangle_t
+
 \langle N^{D,n,m,k}(i)\rangle_t+\langle N^{B,n,m,k}(i-e_k)\rangle_t
\\+
 \langle N^{D,n,m,k}(i+e_k)\rangle_t
+
 \langle N^{\overline M,n,m,k}(i)\rangle_t+
\langle N^{M,n,m,k}(i)\rangle_t
+
 \langle N^{\overline M,n,m,k}(i+e_k)\rangle_t
\\
+
\langle N^{M,n,m,k}(i-e_k)\rangle_t
+\langle N^{\overline{F},n,m}(i)\rangle_t
+
\sum_{i'}
 \langle N^{F,n,m}(i',i)\rangle_t+
 \langle N^{E,n,m}(i)\rangle_t\\
- \langle N^{\overline M,n,m,k}(i),N^{ M,n,m,k}(i-e_k)\rangle_t
- \langle N^{ M,n,m,k}(i),N^{\overline M,n,m,k}(i+e_k)\rangle_t\\
+ \langle N^{\overline M,n,m,k}(i+e_k),N^{ M,n,m,k}(i-e_k)\rangle_t
\br)
\,,
\end{multline}
where $\langle N^{\overline M,n,m,k}(i),N^{ M,n,m,k}(i-e_k)\rangle_t=
\langle N^{\overline M,n,m,k}(i+e_k),N^{ M,n,m,k}(i-e_k)\rangle_t=0$ when $i_k=0$\,.

\begin{lemma}
  \label{le:dyra}
For all $L>0$\,,
\begin{equation*}\lim_{K\to\infty}
\limsup_{n,m\to\infty}
\mathbf P(\sup_{t\in[0,L]}\sum_i\abs{ X^{n,m}_t(i)}^2
>Km^2)=0\,.
\end{equation*}
\end{lemma}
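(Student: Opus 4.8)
The plan is to derive a differential inequality for $\sum_i X^{n,m}_t(i)^2$ and to close it by Gronwall's inequality inside a suitable localisation. Write $V^{n,m}_t=\frac{1}{m^2}\sum_i X^{n,m}_t(i)^2$. The key structural observation is that each primitive jump changes only boundedly many coordinates $X^{n,m}_\cdot(i)$ by bounded amounts: at a fission of an $i'$--group the squared increment satisfies $\sum_i(\Delta X^{n,m}_s(i))^2\le1+\sum_i\theta^{n,m}_{i'}(i,p)^2\le1+b^2$, while births, deaths, migrations and extinctions alter at most four coordinates by $\pm1$; hence $\sum_i(\Delta X^{n,m}_s(i))^2\le C_b$ at every jump, with $C_b$ depending only on $b$ and $\ell$. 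Applying It\^o's formula to $x\mapsto x^2$ coordinatewise, summing over $i$, and invoking \eqref{eq:52a}--\eqref{eq:57a} gives the semimartingale decomposition
\begin{equation*}
  \sum_i X^{n,m}_t(i)^2=\sum_i X^{n,m}_0(i)^2+\mathcal M^{n,m}_t+\int_0^t\mathcal A^{n,m}_s\,ds,
\end{equation*}
where $\mathcal M^{n,m}$ is a local martingale with jumps $\sum_i\bl(2X^{n,m}_{s-}(i)\Delta X^{n,m}_s(i)+(\Delta X^{n,m}_s(i))^2\br)$ and $\mathcal A^{n,m}_s$ collects the event rates multiplied by the corresponding increments of $\sum_iX^{n,m}_\cdot(i)^2$.

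The first step is to estimate $\mathcal A^{n,m}_s$ from above. For the birth, death and migration events one multiplies the coordinate increments of \eqref{eq:52a} by $X^{n,m}_s(i)$, sums over $i$, shifts indices, and applies $2ab\le a^2+b^2$; using the boundedness of $i_k(\beta^{n,m,k}(i)+\delta^{n,m,k}(i)+\mu^{n,m,k}(i))$ and, for the terms carrying a factor $X^{n,m}_s(i)/X^{n,m,\ast}_s$, the bound $\sum_{i'}i'_k\mu^{n,m,k}(i')X^{n,m}_s(i')\le C X^{n,m,\ast}_s$, each such term is at most $C(\sum_iX^{n,m}_s(i)^2+X^{n,m,\ast}_s)$. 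Extinction contributes at most $\sum_iX^{n,m}_s(i)X^{n,m,\ast}_s\epsilon^{n,m}(i)\le C(X^{n,m,\ast}_s)^2/m$. Fission is the delicate term: its contribution is at most
\begin{equation*}
  \sum_{i'}X^{n,m}_s(i')\phi^{n,m}(i')\Bl(1+b^2+2\sum_iX^{n,m}_s(i)\eta^{n,m}(i',i)\Br),
\end{equation*}
and a naive application of $2ab\le a^2+b^2$ fails here since $\sum_{i'}\eta^{n,m}(i',i)$ need not be bounded in $i$; instead one uses $\sum_i\eta^{n,m}(i',i)\le b$, established in the proof of Lemma \ref{le:conserv}, to get $\sum_iX^{n,m}_s(i)\eta^{n,m}(i',i)\le b\max_iX^{n,m}_s(i)\le b(\sum_iX^{n,m}_s(i)^2)^{1/2}$, whence fission contributes at most $C(\sum_iX^{n,m}_s(i)^2+X^{n,m,\ast}_s+X^{n,m,\ast}_s(\sum_iX^{n,m}_s(i)^2)^{1/2})$. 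Since $X^{n,m,\ast}_s=mR^{n,m}_s$ by \eqref{eq:19}, dividing by $m^2$ and using $\sqrt{V^{n,m}_s}\le(V^{n,m}_s+1)/2$ yields
\begin{equation*}
  \frac{\mathcal A^{n,m}_s}{m^2}\le a(R^{n,m}_s)\bl(V^{n,m}_s+1\br)+\frac{b(R^{n,m}_s)}{m}
\end{equation*}
with $a$, $b$ polynomials of degrees $1$ and $2$. A parallel accounting of the event rates, all of which are $O(m(1+R^{n,m}_s)^2)$, together with $\abs{\Delta\mathcal M^{n,m}_s}\le Cm\sqrt{V^{n,m}_{s-}}+C$, gives $\langle\mathcal M^{n,m}\rangle_t\le C\int_0^t m(1+R^{n,m}_s)^2(m^2V^{n,m}_{s-}+1)\,ds$.

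The second step is the localisation. Put $\sigma_K=\inf\{t:\,R^{n,m}_t>K\}$, $\rho_N=\inf\{t:\,V^{n,m}_t>N\}$, and $\Gamma^{n,m}_K=\{\sum_iX^{n,m}_0(i)^2\le Km^2\}$; by \eqref{eq:30} (and $R^{n,m}_t\le\overline R^{n,m}_t$ by \eqref{eq:23}), $\limsup_{n,m\to\infty}\mathbf P(\sigma_K<L)\to0$ as $K\to\infty$, while $\limsup_{n,m\to\infty}\mathbf P((\Gamma^{n,m}_K)^c)\to0$ as $K\to\infty$ by hypothesis. On $[0,\sigma_K\wedge\rho_N]$ the bracket $\langle\mathcal M^{n,m}\rangle$ is bounded by a deterministic constant, so $\mathcal M^{n,m}_{\cdot\wedge\sigma_K\wedge\rho_N}$ is a square--integrable martingale; multiplying the decomposition by the $\mathcal F^{n,m}_0$--measurable indicator $\mathbf 1_{\Gamma^{n,m}_K}$, evaluating at $t\wedge\sigma_K\wedge\rho_N$, taking expectations, bounding the drift with the displayed estimate (so that $a(R^{n,m}_s)\le a(K)$ for $s<\sigma_K$), and applying Gronwall's inequality gives $\mathbf E\mathbf 1_{\Gamma^{n,m}_K}V^{n,m}_{t\wedge\sigma_K\wedge\rho_N}\le G_K$ for $t\le L$, a bound independent of $N$ and, for $m$ large, of $n,m$; letting $N\to\infty$ and using Fatou's lemma removes $\rho_N$. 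Feeding this back into the estimate for $\langle\mathcal M^{n,m}\rangle$ gives $\mathbf E\mathbf 1_{\Gamma^{n,m}_K}\langle\mathcal M^{n,m}\rangle_{L\wedge\sigma_K}=O(m^3)$, so by the Burkholder--Davis--Gundy inequality $\mathbf E\sup_{t\le L}\mathbf 1_{\Gamma^{n,m}_K}\abs{\mathcal M^{n,m}_{t\wedge\sigma_K}}=O(m^{3/2})=o(m^2)$; combining this with the drift estimate and the bound just obtained for $\mathbf E\mathbf 1_{\Gamma^{n,m}_K}V^{n,m}_{s\wedge\sigma_K}$ yields $\mathbf E\sup_{t\le L}\mathbf 1_{\Gamma^{n,m}_K}V^{n,m}_{t\wedge\sigma_K}\le H_K$ for all large $n,m$, with $H_K$ finite. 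Then, for $A>0$,
\begin{equation*}
  \mathbf P\bl(\sup_{t\le L}\sum_iX^{n,m}_t(i)^2>Am^2\br)\le\frac{H_K}{A}+\mathbf P\bl((\Gamma^{n,m}_K)^c\br)+\mathbf P\bl(\sigma_K<L\br),
\end{equation*}
and letting $A\to\infty$ and then $K\to\infty$ completes the proof.

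The main obstacle is the fission drift term $\sum_{i',i}X^{n,m}_s(i')X^{n,m}_s(i)\phi^{n,m}(i')\eta^{n,m}(i',i)$, which resists the naive quadratic estimate for want of a uniform bound on $\sum_{i'}\eta^{n,m}(i',i)$; the remedy is to bound $\sum_iX^{n,m}_s(i)\eta^{n,m}(i',i)$ by $b\max_iX^{n,m}_s(i)\le b(\sum_iX^{n,m}_s(i)^2)^{1/2}$ and then to absorb the resulting term $X^{n,m,\ast}_s\sqrt{V^{n,m}_s}$ via Young's inequality, so that the Gronwall step closes. A secondary technical point is the ordering of the stopping times: one must bound $\mathbf E V^{n,m}_{t\wedge\sigma_K\wedge\rho_N}$ at a fixed $t$ first and extract the supremum over $t$ only afterwards, through a maximal inequality, to avoid a circular occurrence of $\sup_tV^{n,m}_t$ on the right.
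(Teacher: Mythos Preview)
Your proposal is correct and follows essentially the same scheme as the paper: It\^o's formula on $\sum_i X^{n,m}_t(i)^2$, a drift estimate closed by Gronwall inside a localisation by a stopping time for $R^{n,m}$ and an $\mathcal F^{n,m}_0$--event for the initial data, followed by a maximal inequality to upgrade the pointwise bound to a supremum.

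Two tactical differences are worth noting. First, your handling of the fission drift is more elaborate than necessary. Your observation that the naive inequality $2ab\le a^2+b^2$ fails (because $\sum_{i'}\eta^{n,m}(i',i)$ need not be bounded in $i$) is correct, but the remedy via $\max_i X(i)\le(\sum_iX(i)^2)^{1/2}$ can be avoided: since the number of offspring in a fission is at most $b$, one has the pointwise bound $\eta^{n,m}(i',i)\le b$, so $\sum_{i'}X^{n,m}_s(i')\phi^{n,m}(i')\eta^{n,m}(i',i)\le Cb\,X^{n,m,\ast}_s$ and hence the full sum is at most $Cb(X^{n,m,\ast}_s)^2=Cb\,m^2(R^{n,m}_s)^2$, already controlled by Lemma~\ref{le:numberofgroups}. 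This is the route the paper takes, and it removes the $\sqrt{V^{n,m}_s}$ term from the drift. Second, for the martingale supremum the paper uses the weak-type Doob inequality after expressing $\sum_i\bigl|\int X^{n,m}_{s-}(i)\,dN^{n,m}_s(i)\bigr|$ through the other pieces of the It\^o decomposition, rather than BDG on the aggregated martingale; both give $o(m^2)$, so this is a matter of taste.
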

\begin{proof}
  On writing   \eqref{eq:52a} as
\begin{equation*}
  X^{n,m}_t(i)=X^{n,m}_0(i)+A^{n,m}_t(i)+N^{n,m}_t(i)\,,
\end{equation*}
 we have that
\begin{multline}
  \label{eq:29'}
  X^{n,m}_t(i)^2
=X^{n,m}_0(i)^2+
2\sum_{0<
    s\le t}X^{n,m}_{s-}(i)\Delta X^{n,m}_{s}(i)
+\sum_{0<
    s\le t}(\Delta X^{n,m}_{s}(i))^2\\
\\= X^{n,m}_0(i)^2+
2\int_0^tX^{n,m}_{s}(i)\,d A^{n,m}_{s}(i)
+2\int_0^tX^{n,m}_{s-}(i)
d N^{n,m}_{s}(i)
+\sum_{0<
    s\le t}(\Delta X^{n,m}_{s}(i))^2\,.
\end{multline}
As a consequence, on recalling that $\abs{\Delta X^{n,m}_s(i)}\le
b$\,,
\begin{multline}
  \label{eq:77}
    X^{n,m}_t(i)^2
\le X^{n,m}_0(i)^2+
2\int_0^tX^{n,m}_{s}(i)\,d A^{n,m}_{s}(i)
+2\int_0^tX^{n,m}_{s-}(i)
d N^{n,m}_{s}(i)\\
+b\sum_{0<
    s\le t}\abs{\Delta X^{n,m}_{s}(i)}\,.
\end{multline}
By  \eqref{eq:52a},
\begin{multline*}
  \int_0^tX^{n,m}_{s}(i)\,d A^{n,m}_{s}(i)=
\int_0^tX^{n,m}_{s}(i)\,
\Bl(\sum_{k=1}^\ell\bl(-X^{n,m}_s(i)i_k(\beta^{n,m,k}(i)
+\delta^{n,m,k}(i))\\
+X^{n,m}_s(i- e_k)(i_k-1)\beta^{n,m,k}(i- e_k)
+
X^{n,m}_s(i+ e_k)(i_k+1)\delta^{n,m,k}(i+ e_k)
\\
-X^{n,m}_s(i)i_k\mu^{n,m,k}(i)
+X^{n,m}_s(i+ e_k)(i_k+1)\mu^{n,m,k}(i+ e_k)\\
+\sum_{i'\not=i- e_k}
X^{n,m}_s(i')i_k'\mu^{n,m,k}(i')
\frac{X^{n,m}_s(i- e_k)}{X^{n,m,\ast}_s}
-\sum_{i'\not=i}
X^{n,m}_s(i')i'_k\mu^{n,m,k}(i')
\frac{X^{n,m}_s(i)}{X^{n,m,\ast}_s}\br)\\
+\sum_{i'}X^{n,m}_s(i')\phi^{n,m}(i')\eta^{n,m}(i',i)
-X^{n,m}_s(i)\phi^{n,m}(i)-
X^{n,m}_s(i)X^{n,m,\ast}_s\epsilon^{n,m}(i)\Br)\,ds\,.
\end{multline*}
Since the functions
$i_k(\beta^{n,m,k}(i)+\delta^{n,m,k}(i)+\mu^{n,m,k}(i))$\,,
$\phi^{n,m}(i)$ and $\eta^{n,m}(i',i)$ 
are
bounded, there exist $K_1>0$ and $K_2>0$ such that
\begin{multline*}
 \abs{ \int_0^tX^{n,m}_{s}(i)\,d A^{n,m}_{s}(i)}\le K_1\int_0^t
X^{n,m}_{s}(i)\bl(X^{n,m}_{s}(i)
+\sum_{k=1}^\ell\bl(X^{n,m}_{s}(i-e_k)
+X^{n,m}_{s}(i+e_k)\br)\\
+\sum_{i'}X^{n,m}_{s}(i')\phi^{n,m}(i')\eta^{n,m}(i',i)
+X^{n,m}_{s}(i)X^{n,m,\ast}_{s}\epsilon^{n,m}(i)\br)\,ds\\
\le K_2\int_0^t\bl(X^{n,m}_{s}(i)^2+\sum_{k=1}^\ell\bl(
X^{n,m}_{s}(i-e_k)^2
+X^{n,m}_{s}(i+e_k)^2\br)\\
+X^{n,m}_{s}(i)X^{n,m,\ast}_{s}
+X^{n,m}_{s}(i)^2X^{n,m,\ast}_{s}\epsilon^{n,m}(i)\br)\,ds\,.
\end{multline*}
On recalling that  $m\epsilon^{n,m}(i)$ is
bounded in $n,m,i$\,, for some $K_3>0$\,,
\begin{equation}
  \label{eq:72}
  \sum_{i}\abs{\int_0^tX^{n,m}_{s}(i)\,d A^{n,m}_{s}(i)}\le
K_3\int_0^t\sum_{i}X^{n,m}_{s}(i)^2(1+R^{n,m}_s)\,ds+K_3m^2
\int_0^t(R^{n,m}_{s})^2\,ds\,.
\end{equation}
By \eqref{eq:5}, \eqref{eq:32}, \eqref{eq:28a},
 \eqref{eq:66},  \eqref{eq:19}, and \eqref{eq:75a},
     for some $K_4>0$\,,
\begin{equation}
  \label{eq:76}
\sum_i  d\langle N^{n,m}(i)\rangle_t
\le
 K_4m(1+( R^{n,m}_t)^2)\,dt\,.
\end{equation}
Let, for $\gamma>0$\,,
\begin{equation}
  \label{eq:73}
  \tau^{n,m}_\gamma=\inf\{t\ge0:\, R^{n,m}_t>
\gamma \}\,.
\end{equation}
Let
$X^{n,m}(i)\circ
 N^{n,m}(i)_t=\int_0^tX^{n,m}_{s-}(i)
d N^{n,m}_{s}(i)$ and $X^{n,m}(i)\circ
 N^{n,m}(i)=\bl(X^{n,m}(i)\circ
 N^{n,m}(i)_t\,,t\ge0\br)$\,. 
The process  $X^{n,m}(i)\circ
 N^{n,m}(i)$ is a locally square integrable martingale with the
 predictable quadratic
variation process 
$(\int_0^tX^{n,m}_{s}(i)^2
d\langle N^{n,m}(i)\rangle_s\,,t\ge0\br)$\,. By \eqref{eq:76},
 $\int_0^{t\wedge\tau^{n,m}_\gamma} X^{n,m}_{s}(i)^2
d\langle N^{n,m}(i)\rangle_s$ is bounded for given $n,m$\,. Therefore,
$\bl(X^{n,m}(i)\circ N^{n,m}_{t\wedge\tau^{n,m}_\gamma}(i)\,, t\ge0\br)$  is a martingale, so
$\mathbf E(X^{n,m}(i)\circ N^{n,m}(i)_{t\wedge\tau^{n,m}_\gamma}|\mathcal{F}^{n,m}_0)=0$\,.
By \eqref{eq:77} and \eqref{eq:72}, 
for $R>0$\,,
introducing the event $\Gamma^{n,m}=\{\sum_{i'}X_0^{n,m}(i')^2\le
Rm^2\}$\,, 
\begin{multline*}
   \mathbf E\sum_{i}X^{n,m}_{t\wedge\tau^{n,m}_\gamma}(i)^2
\mathbf 1_{\Gamma^{n,m}}
\le Rm^2+
2K_3(1+\gamma)\int_0^t\mathbf E\sum_{i}X^{n,m}_{s\wedge
  \tau^{n,m}_\gamma}(i)^2\mathbf 1_{\Gamma^{n,m}}\,ds\,
\\
+2K_3
m^2\gamma^2t
+b\mathbf E\sum_{0<
    s\le t}\abs{\Delta X^{n,m}_{s\wedge\tau^{n,m}_\gamma}(i)}\,.
\end{multline*}
By the analogue of \eqref{eq:1}, 
\begin{multline*}
  \mathbf E\sum_{0<
    s\le t}\abs{\Delta X^{n,m}_{s\wedge\tau^{n,m}_\gamma}(i)}\le
\sum_i\sum_{k=1}^\ell
\mathbf E\bl( B^{n,m,k}_{t\wedge\tau^{n,m}_\gamma}(i)+ D^{n,m,k}_{t\wedge\tau^{n,m}_\gamma}(i)
+ M^{n,m,k}_{t\wedge\tau^{n,m}_\gamma}(i)+ \overline M^{n,m,k}_{t\wedge\tau^{n,m}_\gamma}(i)\br)
\\+
\sum_i\mathbf E
\overline F^{n,m}_{t\wedge\tau^{n,m}_\gamma}(i)+\sum_i\sum_{i'}
\mathbf E F^{n,m}_{t\wedge\tau^{n,m}_\gamma}(i',i)
+\sum_i \mathbf E E^{n,m}_{t\wedge\tau^{n,m}_\gamma}(i)\,.
\end{multline*}
Since the processes on the righthand sides of \eqref{eq:18} are local
martingales and $ X^{n,m,\ast}_{s}\le m\gamma$ when
$s<\tau^{n,m}_\gamma$\,, for some $K_5>0$\,,
\begin{equation}
  \label{eq:98}
    \mathbf E\sum_{0<
    s\le t}\abs{\Delta X^{n,m}_{s\wedge\tau^{n,m}_\gamma}(i)}\le
K_5m\gamma(1+\gamma)t\,.
\end{equation}
It follows that 
\begin{multline*}
   \mathbf E\sum_{i}X^{n,m}_{t\wedge\tau^{n,m}_\gamma}(i)^2
\mathbf 1_{\Gamma^{n,m}}
\le Rm^2+
2K_3(1+\gamma)\int_0^t\mathbf E\sum_{i}X^{n,m}_{s\wedge
  \tau^{n,m}_\gamma}(i)^2\mathbf 1_{\Gamma^{n,m}}\,ds\,
\\
+2K_3
m^2\gamma^2t+bK_5m\gamma(1+\gamma)t\,.
\end{multline*}
By Gronwall's inequality, 
\begin{equation}
  \label{eq:87}
    \mathbf E\sum_{i}X^{n,m}_{t\wedge\tau^{n,m}_\gamma}(i)^2
\mathbf 1_{\Gamma^{n,m}}
\le (Rm^2+2K_3\gamma^2tm^2+bK_5m\gamma(1+\gamma)t
)e^{2K_3(1+\gamma)t}\,.
\end{equation}
By \eqref{eq:29'} and \eqref{eq:72}, recalling \eqref{eq:46}, for some $K_6>0$\,,
\begin{multline*}
2\sum_i\abs{\int_0^tX^{n,m}_{s-}(i)
d N^{n,m}_{s}(i)}\le 
    \sum_i X^{n,m}_t(i)^2+\sum_i X^{n,m}_0(i)^2+
\\2\sum_i\abs{\int_0^tX^{n,m}_{s}(i)\,d A^{n,m}_{s}(i)}
+b\sum_{0<s\le t}\sum_i\abs{\Delta X^{n,m}_s(i)}
\\\le     \sum_i X^{n,m}_t(i)^2+\sum_i X^{n,m}_0(i)^2
+K_6\int_0^t\sum_{i}X^{n,m}_{s}(i)^2\,ds+K_6
m^2\int_0^t(R^{n,m}_{s})^2\,ds\\+
K_6\int_0^t \sum_iX^{n,m}_s(i)^2R_s^{n,m}\,ds
+b\sum_{0<s\le t}\sum_i\abs{\Delta X^{n,m}_s(i)}\,.
\end{multline*}
Therefore,  by \eqref{eq:98} and \eqref{eq:87},
\begin{multline*}
      2\mathbf E\sum_i\abs{\int_0^{t\wedge\tau^{n,m}_\gamma}X^{n,m}_{s-}(i)
d N^{n,m}_{s}(i)}\mathbf 1_{\Gamma^{n,m}}\le
(1+K_6t+K_6t\gamma )
(Rm^2+2K_3\gamma^2tm^2\\+bK_5m\gamma(1+\gamma)t)e^{2K_3(1+\gamma)t}
+Rm^2+K_6t\gamma^2m^2
+bK_5m\gamma(1+\gamma)t\,.
\end{multline*}
On applying Doob's inequality, for $L>0$ and $K>0$\,,
\begin{multline*}
  \mathbf P(\mathbf 1_{\Gamma^{n,m}}\sup_{t\le L\wedge\tau^{n,m}_\gamma }\abs{
\sum_i\int_0^tX^{n,m}_{s-}(i)\,dN^{n,m}_s(i)}>Km^2)\\
\le\frac{1}{2Km^2}\,\Bl( 
(1+K_6L+K_6L\gamma )
(Rm^2+2K_3\gamma^2Lm^2\\
+bK_5m\gamma(1+\gamma)L)e^{2K_3(1+\gamma)L}
+Rm^2+K_6L\gamma^2m^2+bK_5m\gamma(1+\gamma)L\Br)\,.
   \end{multline*}
By \eqref{eq:29'}, \eqref{eq:72}, \eqref{eq:98}, and Gronwall's inequality,
\begin{multline*}
\sup_{t\le L\wedge\tau^{n,m}_\gamma}  \sum_iX_t^{n,m}(i)^2
\le\bl( \sum_iX^{n,m}_0(i)^2+
2K_3m^2\gamma^2L\\
+2\sup_{t\le L\wedge\tau^{n,m}_\gamma}\abs{\sum_i\int_0^tX^{n,m}_{s-}(i)
d N^{n,m}_{ s}(i)}
+bK_5m\gamma(1+\gamma)L\br)e^{2K_3(1+\gamma)L}\,.
\end{multline*}
It follows that, for arbitrary $K'>0$\,,
\begin{multline*}
  \mathbf P\bl(\sup_{t\le L}\sum_{i}X^{n,m}_{t}(i)^2>K'm^2\br)
\le\mathbf P(\sum_{i}X^{n,m}_0(i)^2>Rm^2)+
 \mathbf P(\tau^{n,m}_\gamma\le L)\\+
\mathbf P\bl(\mathbf 1_{\Gamma^{n,m}}
\sup_{t\le L\wedge\tau_\gamma^{n,m}}\abs{\sum_i\int_0^tX^{n,m}_{s-}(i)
d N^{n,m}_{s}(i)}>\frac{m^2}{2}\,\bl(K' e^{-2K_3(1+\gamma)L}-R
\\-2K_3\gamma^2L-\frac{bK_5\gamma(1+\gamma)L}{m}\br)\br)
\le
\mathbf P(\sum_{i}X^{n,m}_0(i)^2>Rm^2)+
 \mathbf P(\tau^{n,m}_\gamma\le L)\\+
\frac{1}{m^2\bl(K' e^{-2K_3(1+\gamma)L}-R
-2K_3\gamma^2L-bK_5\gamma(1+\gamma)L/m\br)}\\
\Bl((1+K_6L+K_6L\gamma )
(Rm^2+2K_3\gamma^2Lm^2+bK_5m\gamma(1+\gamma)L)e^{2K_3(1+\gamma)L}\\
+Rm^2+K_6L\gamma^2m^2+bK_5m\gamma(1+\gamma)L\Br)\,.
\end{multline*}
On recalling \eqref{eq:73},
\begin{multline*}
  \lim_{K'\to\infty}\limsup_{n,m\to\infty}
  \mathbf P(\sup_{t\le L}\sum_i
X^{n,m}_{t}(i)^2>K'm^2)\\\le
\limsup_{n,m\to\infty}\mathbf P(\sum_{i}X^{n,m}_0(i)^2>Rm^2)+
\limsup_{n,m\to\infty}\mathbf P\bl(\sup_{t\le L}
 R^{n,m}_{t}\ge \gamma)\,.
\end{multline*}
Letting  $R\to\infty$\,, $\gamma\to\infty$\,, and accounting for Lemma
\ref{le:numberofgroups2} and the hypotheses of the theorem
yield the convergence
\begin{equation*}
  \lim_{K'\to\infty}\limsup_{n,m\to\infty}
\mathbf P\bl(\sup_{t\le L}\sum_{i}X^{n,m}_{t}(i)^2>K'm^2\br)=0\,.
\end{equation*}
\end{proof}

Let $a(u)$\,, where $u\in\R^\ell\,,$ 
represent a bounded and  continuously differentiable  function
 of compact support.
By  \eqref{eq:19} and   \eqref{eq:52a}, for $t\ge0$\,,
\begin{multline*}
\sum_{i}
a \bl(\frac{i}{n}\br)
\bl(  \frac{X^{n,m}_t(i)}{m}-\frac{X^{n,m}_0(i)}{m}\br)=
\int_0^t\sum_i
a \bl(\frac{i}{n}\br)\Bl(\sum_{k=1}^\ell \Bl(-
\frac{X^{n,m}_s(i)}{m}
i_k(\beta^{n,m,k}(i)+
\delta^{n,m,k}(i))\\
+
\frac{X^{n,m}_s(i-e_k)}{m}(i_k-1)\beta^{n,m,k}(i-e_k)+
\frac{X^{n,m}_s(i+e_k)}{m}(i_k+1)
\delta^{n,m,k}(i+e_k)\\
+\sum_{i'\not=i-e_k}
\frac{X^{n,m}_s(i')}{m}\,i_k'\mu^{n,m,k}(i')
\frac{X^{n,m}_s(i-e_k)}{mR^{n,m}_s}
-
\sum_{i'\not=i}
\frac{X^{n,m}_s(i')}{m}\,i_k'\mu^{n,m,k}(i')
\frac{X^{n,m}_s(i)}{mR^{n,m}_s}\\
-
\frac{X^{n,m}_s(i)}{m}\,
i_k\mu^{n,m,k}(i)
+
\frac{X^{n,m}_s(i+e_k)}{m}
(i_k+1)\mu^{n,m,k}(i+e_k)\Br)
-
\frac{X^{n,m}_s(i)}{m}\phi^{n,m}(i)\\
+
\sum_{i'}
\frac{X^{n,m}_s(i')}{m}\phi^{n,m}(i')\eta^{n,m}(i',i)
-
\frac{X^{n,m}_s(i)}{m}\,
\sum_{i'}X^{n,m}_s(i')\epsilon^{n,m}(i)\Br)\,ds
+N'^{n,m}_t\,,
\end{multline*}
where 
\begin{equation}
  \label{eq:53}
  N_t'^{n,m}=\frac{1}{m}\sum_i a \bl(\frac{i}{n}\br)
N_t^{n,m}(i)\,.
\end{equation}
Changing summation indices and regrouping
  yield
\begin{multline}
  \label{eq:64}
\sum_i a \bl(\frac{i}{n}\br)
  \frac{X^{n,m}_t(i)}{m}=
\sum_i a\bl(\frac{i}{n}\br)
  \frac{X^{n,m}_0(i)}{m}\\+
\int_0^t\sum_i\Bl(
 \frac{X^{n,m}_s(i)}{m}\sum_{k=1}^\ell \Bl(
n\bl(a \bl(\frac{i+e_k}{n}\br)-a \bl(\frac{i}{n}\br)\br)
\,\,\frac{i_k}{n}\,\beta^{n,m,k}(i)\\+
n\bl(
a \bl(\frac{i-e_k}{n}\br)-
a \bl(\frac{i}{n}\br)\br)
\frac{i_k}{n}\,\delta^{n,m,k}(i)
+n\bl(a \bl(\frac{i-e_k}{n}\br)-a \bl(\frac{i}{n}\br)
\br)\,
\frac{i_k}{n}\,\mu^{n,m,k}(i)\\+
n\bl(a \bl(\frac{i+e_k}{n}\br)-a \bl(\frac{i}{n}\br)\br)
\sum_{i'\not=i}\,\frac{i'_k}{n}\,\frac{
\mu^{n,m,k}(i')X^{n,m}_s(i')}
{mR^{n,m}_s}\Br)
\\+
a \bl(\frac{i}{n}\br)\sum_{i'}
\frac{X^{n,m}_s(i')}{m}\,\phi^{n,m}(i')\eta^{n,m}(i',i)
-a \bl(\frac{i}{n}\br)\phi^{n,m}(i)\,\frac{X^{n,m}_s(i)}{m}\\-
a \bl(\frac{i}{n}\br)\,\frac{X^{n,m}_s(i)}{m}
\sum_{i'}X^{n,m}_s(i')\epsilon^{n,m}(i)\Br)\,ds+N'^{n,m}_t\,.
\end{multline}
Owing to \eqref{eq:57a} and \eqref{eq:53},
 \begin{multline*}
N_t'^{n,m}
\\=        
\frac{1}{m}\sum_i\Bl(\sum_{k=1}^\ell \Bl(
\bl(a \bl(\frac{i+e_k}{n}\br)-
a \bl(\frac{i}{n}\br)\br)N_t^{B,n,m,k}(i)
+\bl(a \bl(\frac{i-e_k}{n}\br)-a \bl(\frac{i}{n}\br)\br)
 N^{D,n,m,k}_t(i)\\
+\bl(a \bl(\frac{i+e_k}{n}\br)-
a \bl(\frac{i}{n}\br)\br)
 N^{M,n,m,k}_t(i)
+\br(a \bl(\frac{i-e_k}{n}\br)-
a \bl(\frac{i}{n}\br)\br)
 N^{\overline M,n,m,k}_t(i)\Br)
\\+
\sum_{i'}a \bl(\frac{i}{n}\br)
 N^{F,n,m}_t(i',i)-
a \bl(\frac{i}{n}\br)
\bl( N^{\overline{F},n,m}_t(i)+ N^{E,n,m}_t(i)\br)\Br)
\,.
\end{multline*}
Thus,  $ N'^{n,m}=(N_t'^{n,m}\,,t\ge0)$ is 
 a locally square integrable martingale  with the predictable quadratic
variation process
\begin{multline*}
  \langle N'^{n,m}\rangle_t=
\frac{1}{m^2}\sum_i\Bl(\sum_{k=1}^\ell \Bl(
\bl(a \bl(\frac{i+e_k}{n}\br)-
a \bl(\frac{i}{n}\br)\br)^2\langle N^{B,n,m,k}(i)\rangle_t\\
+\bl(a \bl(\frac{i-e_k}{n}\br)-a \bl(\frac{i}{n}\br)\br)^2
 \langle N^{D,n,m,k}(i)\rangle_t\\
+\bl(a \bl(\frac{i+e_k}{n}\br)-
a \bl(\frac{i}{n}\br)\br)^2
\langle N^{M,n,m,k}(i)\rangle_t
\\
+\br(a \bl(\frac{i-e_k}{n}\br)-
a \bl(\frac{i}{n}\br)\br)^2
 \langle N^{\overline M,n,m,k}(i)\rangle_t
\\
+\sum_{i'}
\br(a \bl(\frac{i+e_k}{n}\br)-
a \bl(\frac{i}{n}\br)\br)\br(a \bl(\frac{i'-e_k}{n}\br)-
a \bl(\frac{i'}{n}\br)\br)
 \langle N^{ M,n,m,k}(i),N^{\overline M,n,m,k}(i')\rangle_t\Br)\\
+
a \bl(\frac{i}{n}\br)^2\bl(\sum_{i'}
 \langle N^{F,n,m}(i',i)\rangle_t+
 \langle N^{E,n,m}(i)\rangle_t+\langle N^{\overline{F},n,m}(i)\rangle_t\br)
\\
+a \bl(\frac{i}{n}\br)\sum_{i'}\sum_ja \bl(\frac{j}{n}\br)
\langle N^{F,n,m}(i',i),N^{F,n,m}(i',j)\rangle_t
\\-a \bl(\frac{i}{n}\br)\sum_{i'}a \bl(\frac{i'}{n}\br)
 \langle N^{F,n,m}(i,i'),N^{\overline{F},n,m}(i)\rangle_t
\Br)\,.
\end{multline*}
Substitutions from \eqref{eq:28a} and  \eqref{eq:66} with the account
of \eqref{eq:90}    yield
\begin{multline}
  \label{eq:61}
  \langle N'^{n,m}\rangle_t=  \frac{1}{m^2}
\sum_i\Bl(\sum_{k=1}^\ell \Bl(
\bl(a \bl(\frac{i+e_k}{n}\br)-
a \bl(\frac{i}{n}\br)\br)^2
\int_0^tX^{n,m}_s(i)i_k\beta^{n,m,k}(i)\,ds\\
+
\bl(a \bl(\frac{i-e_k}{n}\br)-a \bl(\frac{i}{n}\br)\br)^2
 \int_0^tX^{n,m}_s(i)i_k\delta^{n,m,k}(i)\,ds\\+
\bl(a \bl(\frac{i+e_k}{n}\br)-
a \bl(\frac{i}{n}\br)\br)^2
\int_0^t\sum_{i'\not=i
}
X^{n,m}_s(i')i'_k\mu^{n,m,k}(i')
\frac{X^{n,m}_s(i)}{X^{n,m,\ast}_s}\,ds
\\
+\br(a \bl(\frac{i-e_k}{n}\br)-
a \bl(\frac{i}{n}\br)\br)^2
\int_0^tX^{n,m}_s(i)i_k\mu^{n,m,k}(i)\bl(1-\frac{X^{n,m}_s(i)}{X^{n,m,\ast}_s}\br)\,ds
\\
+\sum_{i'\not=i}
\br(a \bl(\frac{i+e_k}{n}\br)-
a \bl(\frac{i}{n}\br)\br)\br(a \bl(\frac{i'-e_k}{n}\br)-
a \bl(\frac{i'}{n}\br)\br)
 \int_0^tX^{n,m}_s(i')
i'_k\mu^{n,m,k}(i')
\frac{X_s^{n,m}(i)}{X_s^{n,m,\ast}}\,ds\Br)\\
+\sum_{i'}a \bl(\frac{i}{n}\br)^2
 \int_0^t
X^{n,m}_s(i')\phi^{n,m}(i')\alpha^{n,m}(i',i)\,ds
\\+a \bl(\frac{i}{n}\br)^2
\int_0^t
X^{n,m}_s(i)\bl(X^{n,m,\ast}_s\epsilon^{n,m}(i)+\phi^{n,m}(i)\br)\,ds
\\+
\int_0^tX_s^{n,m}(i)\phi^{n,m}(i)
\mathbf E\bl(\sum_{i'}a \bl(\frac{i'}{n}\br)\theta^{n,m}_i(i',1)\br)^2
\,ds
\\-a \bl(\frac{i}{n}\br)\sum_{i'}a \bl(\frac{i'}{n}\br)
\int_0^tX_s^{n,m}(i)\eta^{n,m}(i,i')\phi^{n,m}(i)\,ds
\Br)\,.
\end{multline}

Let
\begin{equation}
  \label{eq:79}
    a^n(u)=a \bl(\frac{\lfloor nu\rfloor}{n}\br)
\end{equation}
and
\begin{equation}
  \label{eq:65}
  Y^{n,m}_t=
\int_{\R_+^\ell}
a^n(u)
\Lambda^{n,m}_t(du)\,.
\end{equation}
By  \eqref{eq:2}, \eqref{eq:5}, \eqref{eq:32},   \eqref{eq:64}, and \eqref{eq:61}, 
\begin{equation}
  \label{eq:9}
  Y_t^{n,m}=Y^{n,m}_0+\int_0^t Z^{n,m}_s\,ds
+N'^{n,m}_t\,,
\end{equation}
where
\begin{multline}
  \label{eq:11}
  Z^{n,m}_s=
\int_{\R_+^\ell}\Bl(
\sum_{k=1}^\ell \Bl(
n\bl(a^n\bl(u+\frac{e_k}{n}\br)-a ^n(u)\br)
\,
\frac{\lfloor nu_k\rfloor}{n}\,\hat\beta^{n,m,k}(u)\\+
n\bl(
a^n\bl(u-\frac{e_k}{n}\br)-
a^n(u)\br)\,
\frac{\lfloor nu_k\rfloor}{n}\,(\hat\delta^{n,m,k}(u)+
\hat\mu^{n,m,k}(u))
\\+
n\bl(a^n\bl(u+\frac{e_k}{n}\br)-a^n(u)\br)
\displaystyle\int_{\R_+^\ell}
\frac{1}{ R^{n,m}_s}\,
\,\frac{\lfloor nu_k'\rfloor}{n}\,
\hat\mu^{n,m,k}(u')
\,\Lambda^{n,m}_s(du')\Br)
\\+
\int_{\R_+^\ell}
a^n(u')\hat\phi^{n,m}(u)\hat\eta^{n,m}(u,du')
-a^n(u)
\bl(\hat\phi^{n,m}(u)+
R^{n,m}_s\hat\epsilon^{n,m}(u)\br)\Br)\Lambda^{n,m}_s(du)\\
-\int_0^t\sum_i\sum_{k=1}^\ell
 \bl(a \bl(\frac{i+e_k}{n}\br)-a (\frac{i}{n})\br)
\frac{1}{ R^{n,m}_s}\,
i_k\mu^{n,m,k}(i)\frac{X^{n,m}_s(i)^2}{m^2}\,ds
\end{multline}
and,  by \eqref{eq:61}, on letting $B_{1/n}(u)=\{u'\in\R_+^\ell:\,
\max_{k=1,\ldots,\ell}\abs{u'_k-u_k}< 1/n\}$\,,
\begin{multline*}
        \langle N'^{n,m}\rangle_t=  \frac{1}{m}\,
\int_0^t\int_{\R_+^\ell} \Bl(\sum_{k=1}^\ell \Bl(
n\bl(a^n\bl(u+\frac{e_k}{n}\br)-
a^n(u)\br)^2
\frac{\lfloor nu_k\rfloor}{n}\,\hat\beta^{n,m,k}(u)\\
+n
\bl(a^n\bl(u-\frac{e_k}{n}\br)
-a^n(u)\br)^2
\frac{\lfloor nu_k\rfloor}{n}\,\hat\delta^{n,m,k}(u)\\
+n\bl(a^n\bl(u+\frac{e_k}{n}\br)
-a^n(u)\br)^2
\int_{\R_+^\ell\setminus B_{1/n}(u)}
\frac{1}{R^{n,m}_s}\,
\frac{\lfloor nu'_k\rfloor}{n}\,\hat\mu^{n,m,k}(u')
\,\Lambda^{n,m}_s(du')\\
+n\bl(a^n\bl(u-\frac{e_k}{n}\br)
-a^n(u)\br)^2
\frac{\lfloor nu_k\rfloor}{n}\,
\hat\mu^{n,m,k}(u)\bl(1-\frac{\hat X^{n,m}_s(u)}{R^{n,m}_s}\br)\\
+n\bl(a^n\bl(u+\frac{e_k}{n}\br)
-a^n(u)\br)\frac{1}{R^{n,m}_s}
\\
\int_{\R_+^\ell\setminus B_{1/n}(u)}
\bl(a^n\bl(u'-\frac{e_k}{n}\br)
-a^n(u')\br)
\frac{\lfloor
  nu_k'\rfloor}{n}\,\hat\mu^{n,m,k}(u')
\,\Lambda^{n,m}_s(du')\Br)
\\+\hat\phi^{n,m}(u)
\int_{\R_+^\ell}a^n(u')^2
\hat\alpha^{n,m}(u,du')
+a^n(u)^2\bl(
R^{n,m}_s\hat\epsilon^{n,m}(u)+\hat\phi^{n,m}(u)\br)\\
+\hat\phi^{n,m}(u)
\mathbf E\bl(\sum_{i'}a \bl(\frac{i'}{n}\br)
\theta^{n,m}_{\lfloor nu\rfloor}(i',1)\br)^2
\\-a^n(u)
\hat\phi^{n,m}(u)\int_{\R_+^\ell}
a^n(u')\hat\eta^{n,m}(u,du')
\Br)
\Lambda^{n,m}_s(du)\,ds
\,,
\end{multline*}
where, in analogy with \eqref{eq:32},
$\hat\alpha^{n,m}(u,\Gamma)=\sum_{i'}\alpha^{n,m}(\lfloor nu\rfloor,i'/n)
\mathbf 1_{\Gamma}(i'/n)$\,.
We note that
$\mathbf E\bl(\sum_{i'}a (i'/n)
\theta_{\lfloor nu\rfloor}^{n,m}(i',1)\br)^2
\le b^2\sup_{u}a (u)^2$\, and, by \eqref{eq:104},
\begin{equation*}
    \hat\alpha^{n,m}(u,\R_+^\ell)=
\sum_{i'}\alpha^{n,m}\bl(\lfloor nu\rfloor,i'\br)\le
b\eta^{n,m}(\lfloor nu\rfloor,\R_+^\ell)\le b^2\,.
\end{equation*}
Therefore, on recalling \eqref{eq:19},
 \eqref{eq:79}, the fact that the
$a (u)$ are differentiable of bounded support and the boundedness
hypotheses of the theorem, we have that, given $L>0$\,,
 for some $\hat K>0$\,, which may depend
on $L$\,, for $t\le L$\,,
\begin{equation*}
  \langle N'^{n,m}\rangle_t\le \frac{\hat K}{m}\,
\int_0^t(1+(R^{n,m}_s)^2)\,ds\,.
\end{equation*}
By Lemma \ref{le:numberofgroups}, Lemma \ref{le:yule} and the Lenglart--Rebolledo
inequality, for $\chi>0$\,,
\begin{equation}
  \label{eq:81}
  \lim_{n,m\to\infty}\mathbf P(\sup_{t\le L}\abs{N'^{n,m}_t}>\chi)=0\,.
\end{equation}

\begin{lemma}
  \label{le:L^1}
The sequence $\Lambda^{n,m}$ is $C$-tight for convergence in
distribution  in\\ $\mathbb D(\R_+,\mathbb M_+(\R_+^\ell))$\,.
\end{lemma}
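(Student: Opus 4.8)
The plan is to establish $C$--tightness through the standard criterion for càdlàg processes with values in the Polish space $\mathbb M_+(\R_+^\ell)$ (Jakubowski's criterion \cite{Jak86}; see also Jacod and Shiryaev \cite{jacshir} and Ethier and Kurtz \cite{EthKur86}): it suffices to verify (a) a compact containment condition for the $\Lambda^{n,m}$, and (b) $C$--tightness in $\mathbb D(\R_+,\R)$ of the real--valued processes $\bl(\int_{\R_+^\ell}f\,d\Lambda^{n,m}_t\,,t\ge0\br)$ for every $f$ in a family of continuous functions on $\mathbb M_+(\R_+^\ell)$ that separates points and is closed under addition. I would take the family $\{\mu\mapsto\int_{\R_+^\ell}f\,d\mu:\,f\in\mathbb C^1_c(\R_+^\ell)\}$; it is closed under addition, and, since $\mathbb C^1_c(\R_+^\ell)$ is uniformly dense in $\mathbb C_c(\R_+^\ell)$, it separates the points of $\mathbb M_+(\R_+^\ell)$. (The functional $\mu\mapsto\mu(\R_+^\ell)$ evaluated at $\Lambda^{n,m}_t$ is $R^{n,m}_t$, whose $C$--tightness is Lemma \ref{le:numberofgroups}.)

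For (a), fix $L>0$ and, for $A,R>0$, put $K_{A,R}=\{\mu\in\mathbb M_+(\R_+^\ell):\,\mu(\R_+^\ell)\le A,\ \int_{\R_+^\ell}\abs{u}\,\mu(du)\le R\}$. Since $\mu\mapsto\mu(\R_+^\ell)$ is weakly continuous and $\mu\mapsto\int_{\R_+^\ell}\abs{u}\,\mu(du)$ is weakly lower semicontinuous, $K_{A,R}$ is weakly closed; it has total mass at most $A$ and is uniformly tight because $\mu(\{\abs{u}>R/\varepsilon\})\le\varepsilon$ for every $\mu\in K_{A,R}$ and $\varepsilon>0$; hence $K_{A,R}$ is weakly compact by Prokhorov's theorem. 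As $\Lambda^{n,m}_t(\R_+^\ell)=R^{n,m}_t\le\overline R^{n,m}_t$ by \eqref{eq:19} and \eqref{eq:23}, the estimates \eqref{eq:30} and \eqref{eq:20} give $\lim_{A,R\to\infty}\liminf_{n,m\to\infty}\mathbf P\bl(\Lambda^{n,m}_t\in K_{A,R}\text{ for all }t\le L\br)=1$, which is the required compact containment.

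For (b), fix $f\in\mathbb C^1_c(\R_+^\ell)$ and take $a=f$ in the computations that lead to \eqref{eq:9}. Because $a^n(i/n)=a(i/n)$ for $i\in\mathbb Z_+^\ell$, one has $Y^{n,m}_t=\int_{\R_+^\ell}f\,d\Lambda^{n,m}_t$, so \eqref{eq:9} exhibits $\int_{\R_+^\ell}f\,d\Lambda^{n,m}_\cdot$ as the sum of its initial value, the absolutely continuous part $\int_0^\cdot Z^{n,m}_s\,ds$, and the martingale $N'^{n,m}$, which satisfies $\sup_{t\le L}\abs{N'^{n,m}_t}\to0$ in probability by \eqref{eq:81}. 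If $\sup_{s\le L}\abs{Z^{n,m}_s}$ is bounded in probability, then for $s,t\le L$ with $\abs{t-s}\le\chi$ one has $\abs{\int_{\R_+^\ell}f\,d\Lambda^{n,m}_t-\int_{\R_+^\ell}f\,d\Lambda^{n,m}_s}\le\chi\,\sup_{r\le L}\abs{Z^{n,m}_r}+2\sup_{r\le L}\abs{N'^{n,m}_r}$, and the probability that the right side exceeds a given $\gamma>0$ tends to $0$ as $\chi\to0$, uniformly in $n,m$; together with $\abs{\int_{\R_+^\ell}f\,d\Lambda^{n,m}_t}\le\norm{f}_\infty R^{n,m}_t$ and Lemma \ref{le:numberofgroups}, this yields $C$--tightness of $\int_{\R_+^\ell}f\,d\Lambda^{n,m}_\cdot$ in $\mathbb D(\R_+,\R)$, the limit points being continuous since the drift is continuous and $N'^{n,m}$ is negligible.

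It thus remains to bound $\sup_{s\le L}\abs{Z^{n,m}_s}$ in probability, and this is the heart of the matter. In \eqref{eq:11} the first differences $n(a^n(u\pm e_k/n)-a^n(u))$ are bounded (by a constant depending only on $a$) and supported in a fixed compact set of $u$; on that set the factors $\frac{\lfloor nu_k\rfloor}{n}\hat\beta^{n,m,k}(u)$, $\frac{\lfloor nu_k\rfloor}{n}(\hat\delta^{n,m,k}(u)+\hat\mu^{n,m,k}(u))$, $\hat\phi^{n,m}(u)$ and $\hat\epsilon^{n,m}(u)$ are uniformly bounded for $n,m$ large by the boundedness hypotheses of the theorem and \eqref{eq:7}, while $\hat\eta^{n,m}(u,\R_+^\ell)\le b$ from Lemma \ref{le:conserv} makes $\int_{\R_+^\ell}a^n(u')\hat\phi^{n,m}(u)\hat\eta^{n,m}(u,du')$ bounded, and $\frac1{R^{n,m}_s}\int_{\R_+^\ell}\frac{\lfloor nu'_k\rfloor}{n}\hat\mu^{n,m,k}(u')\,\Lambda^{n,m}_s(du')$ is bounded by a constant because $\frac{\lfloor nu'_k\rfloor}{n}\hat\mu^{n,m,k}(u')$ is bounded and $\int_{\R_+^\ell}\Lambda^{n,m}_s(du')=R^{n,m}_s$. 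Integrating against $\Lambda^{n,m}_s$ over the compact support then bounds the first integral in \eqref{eq:11} by a constant multiple of $R^{n,m}_s+(R^{n,m}_s)^2$, which is bounded in probability uniformly on $[0,L]$ by Lemma \ref{le:numberofgroups} and \eqref{eq:30}. The remaining term in \eqref{eq:11} is, since $\abs{a((i+e_k)/n)-a(i/n)}\le C/n$ and $i_k\mu^{n,m,k}(i)$ is bounded, at most $\frac{C}{n}\,\frac1{R^{n,m}_s}\sum_i\frac{X^{n,m}_s(i)^2}{m^2}$, which tends to $0$ in probability uniformly on $[0,L]$ by Lemma \ref{le:numberofgroups} and Lemma \ref{le:dyra}. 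Hence $\sup_{s\le L}\abs{Z^{n,m}_s}$ is tight, and (a) with (b) delivers the asserted $C$--tightness of $\Lambda^{n,m}$. I expect the control of this last ``quadratic'' self--migration term to be the principal obstacle --- it is precisely what Lemma \ref{le:dyra} was established to provide --- while the rest is routine estimation under the standing boundedness assumptions.
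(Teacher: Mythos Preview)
Your proof is correct and follows essentially the same route as the paper: Jakubowski's criterion for $C$--tightness in $\mathbb D(\R_+,\mathbb M_+(\R_+^\ell))$, with compact containment supplied by the uniform control of total mass \eqref{eq:30} and of the first moment \eqref{eq:20}, and with the oscillation condition for compactly supported $\mathbb C^1$ test functions coming from the semimartingale representation \eqref{eq:9}, the boundedness in probability of $\sup_{s\le L}\abs{Z^{n,m}_s}$, and the martingale estimate \eqref{eq:81}.

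The one place where you diverge from the paper is in how you bound the quadratic self--migration correction in \eqref{eq:11}. You invoke Lemma~\ref{le:dyra}; the paper's proof of Lemma~\ref{le:L^1} does not, citing only Lemmas~\ref{le:numberofgroups} and~\ref{le:yule}. The reason is that for mere tightness one needs that term \emph{bounded} in probability, and the crude inequality $X^{n,m}_s(i)\le X^{n,m,\ast}_s=mR^{n,m}_s$ already gives
\[
\frac{1}{R^{n,m}_s}\sum_i\bigl|a\bigl(\tfrac{i+e_k}{n}\bigr)-a\bigl(\tfrac{i}{n}\bigr)\bigr|\,i_k\mu^{n,m,k}(i)\,\frac{X^{n,m}_s(i)^2}{m^2}
\le \sum_i\bigl|a\bigl(\tfrac{i+e_k}{n}\bigr)-a\bigl(\tfrac{i}{n}\bigr)\bigr|\,i_k\mu^{n,m,k}(i)\,\frac{X^{n,m}_s(i)}{m},
\]
which is bounded by a constant multiple of $R^{n,m}_s$ under the standing boundedness hypotheses and the compact support of $a$. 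So your expectation that this term is ``the principal obstacle'' and the raison d'\^etre of Lemma~\ref{le:dyra} is slightly misplaced at this stage: Lemma~\ref{le:dyra} is actually deployed \emph{after} Lemma~\ref{le:L^1}, in the identification of the limit, where one needs the quadratic correction to \emph{vanish} (not merely stay bounded) so that $Z^{n,m}_s\to Z_s$. Your use of Lemma~\ref{le:dyra} here is valid, just stronger than required.
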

\begin{proof}
  By Theorem 4.6 in Jakubowski \cite{Jak86} and Tops\oe\,\cite{Top70}
  (or Tops\oe\,\cite{Top}),
it is sufficient to prove
that, for all $L>0$ and $\gamma>0$\,,
\begin{equation*}
  \lim_{K\to\infty}\limsup_{n,m\to\infty}
\mathbf
P\bl(\sup_{t\in[0,L]}\Lambda^{n,m}_t(\R_+^\ell)
>K\br)=0\,,
\end{equation*}
\begin{equation*}
  \lim_{K\to\infty}\limsup_{n,m\to\infty}
\mathbf P\bl(\sup_{t\in[0,L]}\Lambda_t^{n,m}(u:\,\abs{u}>K)>\gamma\br)=0
\end{equation*}
and, for all continuous functions $g$ of compact support  and all $\gamma>0$\,,
\begin{equation*}
 \lim_{\chi\to0} \limsup_{n,m\to\infty}
\mathbf
P(\sup_{\substack{s,t\in[0,L]:\\\abs{s-t}\le\chi}}
\abs{\int\limits_{\R_+^\ell}g(u)(
\Lambda^{n,m}_s(du)-\Lambda^{n,m}_{t}(du))}>\gamma)=0\,.
\end{equation*}
The first and second requirements are fulfilled by Lemma \ref{le:yule}
(see \eqref{eq:30} and \eqref{eq:20}). 
Let us note that, by \eqref{eq:11}, \eqref{eq:19},
Lemma \ref{le:numberofgroups} and Lemma
\ref{le:yule},
\begin{equation*}
  \lim_{K\to\infty}\limsup_{n,m\to\infty}
\mathbf P(\sup_{s\le t}\abs{Z^{n,m}_s}>K)=0\,.
\end{equation*}
Therefore, the third
limit follows from \eqref{eq:65}, \eqref{eq:9}, \eqref{eq:11},  and
\eqref{eq:81}.
\end{proof}
We now identify limit points of the $\Lambda^{n,m}$\,. 
Let $(\tilde \lambda_t\,,t\ge0)$
represent a limit point in distribution of 
$\Lambda^{n,m}$ along a
subsequence in 
 $\mathbb D(\R_+,\mathbb M_+(\R_+^\ell))$\,. 
 We keep the notation $(n,m)$ for the
subsequence. By Lemma \ref{le:L^1}, $\tilde\lambda_t$ is continuous in
$t$ for the metric of weak convergence in $\mathbb M_+(\R_+^\ell)$\,.
Since the functions $a ^n(u)$ are bounded uniformly in $u$ and $n$ and
converge to $a (u)$  uniformly in $u$\,, see \eqref{eq:79},
 by \eqref{eq:65} and the continuous mapping theorem, in distribution in $\mathbb D(\R_+,\R)$\,,
\begin{equation}
  \label{eq:8}
  Y^{n,m}_t\to Y_t=\int_{\R_+^\ell} a (u)\,\tilde \lambda_t(du)\,.
\end{equation}
On recalling \eqref{eq:19} and Lemma \ref{le:numberofgroups}, we obtain that,
in distribution in $\mathbb D(\R_+,\R)$\,,
\begin{equation}
  \label{eq:60}
    R^{n,m}_s\to R_s=
\tilde \lambda_s(\R_+^\ell)\,.
\end{equation}
By Lemma \ref{le:numberofgroups}, the latter quantity is
bounded away from zero locally uniformly in $s$
with probability 1.
Since the function $a (u)$ is continuously differentiable and is of
compact support in $u$\,,
$a^n(u)\to a (u)$  and
$n\bl(a ^n\bl(u\pm e_k/n\br)-a ^n(u)\br)\to\pm \partial_{u_k}
a (u)$ uniformly in $u$\,, as
$n\to\infty$\,. 
Therefore, 
 in distribution in $\mathbb D(\R_+,\R)$\,, for $k=1,\ldots,\ell$\,,
  \begin{equation}
  \label{eq:50}
  \int_{\R_+^\ell}  n\bl(a^n\bl(  u+\frac{e_k}{n}\br)-a^n(u)\br)
\Lambda^{n,m}_s(du)\to
\int_{\R_+^\ell}\partial_{u_k}a (u)
\tilde \lambda_s(du)\,,
\end{equation}
and, since the $u_k\hat\mu^{n,m,k}(u)$ are bounded and the
convergences in \eqref{eq:7} hold, 
\begin{equation}
  \label{eq:43}
    \int_{\R_+^\ell}
\frac{\lfloor nu_k\rfloor}{n}\,
\hat\mu^{n,m,k}(u)
\Lambda^{n,m}_s(du)\to 
\int_{\R_+^\ell}
\,u_k \hat\mu^k(u)
\,\tilde \lambda_s(du)\,.
\end{equation}
Since the convergences in \eqref{eq:60},
 \eqref{eq:50}, and \eqref{eq:43}
 hold jointly,   in distribution 
in $\mathbb D(\R_+,\R)$\,,
\begin{multline}
  \label{eq:59}
    \int_{\R_+^\ell}  n\bl(a^n\bl(  u+\frac{e_k}{n}\br)-a^n(u)\br)\,\Lambda^{n,m}_s(du)\int_{\R_+^\ell}
\frac{1}{ R^{n,m}_s}\,
\,\frac{\lfloor nu_k'\rfloor}{n}\,
\hat\mu^{n,m,k}(u')
\,\Lambda^{n,m}_s(du')\\
\to
\frac{\displaystyle\int_{\R_+^\ell}
u_k\hat\mu^{k}(u)
\,\tilde \lambda_s(du)}{\displaystyle\tilde \lambda_s(\R_+^\ell)\,}
  \int_{\R_+^\ell}\partial_{u_k}a(u)
\tilde \lambda_s(du)\,.
\end{multline}
Similar lines of reasoning show that, jointly in distribution
in $\mathbb D(\R_+,
\R^4)$\,, and jointly with the convergence in \eqref{eq:59}, for $k=1,\ldots,\ell$\,,
\begin{align*}
\int_{\R_+^\ell}
n\bl(a^n\bl(  u+\frac{e_k}{n}\br)-a^n(u)\br)
\,\frac{\lfloor
  nu_k\rfloor}{n}\,\hat\beta^{n,m,k}(u)
\,\Lambda^{n,m}_s(du)\\\to
\int_{\R_+^\ell}\partial_{u_k}a (u)\,u_k\hat\beta^{k}(u)\,
\tilde \lambda_s(du),\\
\int_{\R_+^\ell}
n\bl(
a^n(u)-a^n\bl(u-\frac{e_k}{n}\br)
\br)\,\frac{\lfloor nu_k\rfloor}{n}\,\hat\delta^{n,m,k}(u)\,
\Lambda^{n,m}_s(du)
\\\to
\int_{\R_+^\ell}
\partial_{u_k}a (u)
u_k\hat\delta^{k}(u)\tilde\lambda_s(du)\,,
\\\int_{\R_+^\ell}
n\bl(a^n\bl(  u+\frac{e_k}{n}\br)-a^n(u)\br)
\Lambda^{n,m}_s(du)
\displaystyle\int_{\R_+^\ell}
\frac{1}{ R^{n,m}_s}\,
\,\frac{\lfloor nu_k'\rfloor}{n}\,
\hat\mu^{n,m,k}(u')
\,\Lambda^{n,m}_s(du')\\
\to \frac{\displaystyle
\int_{\R_+^\ell}
u_k\hat\mu^{k}(u)
\,\tilde \lambda_s(du)
}{\displaystyle\tilde \lambda_s(\R_+^\ell)}
\int_{\R_+^\ell}
\partial_{u_k}a (u)\tilde \lambda_s(du)\,,
\\\int_{\R_+^\ell}
n\bl(a^n(u)
-a^n\bl(u-\frac{e_k}{n}\br)\br)\,
\frac{\lfloor nu_k\rfloor}{n}\,\hat\mu^{n,m,k}(u)\,
\Lambda^{n,m}_s(du)\\
\to
\int_{\R_+^\ell}\partial_{u_k}a (u)
u_k\hat\mu^{k}(u)\,\tilde \lambda_s
(du)\,,
\end{align*}
and
\begin{align*}\int_{\R_+^\ell}\bl(\hat\phi^{n,m}(u)
\int_{\R_+^\ell}a^n(u')
\hat\eta^{n,m}(u,du')-a^n(u)
\bl(\hat\phi^{n,m}(u)+
R^{n,m}_s\hat\epsilon^{n,m}(u)\br)\br)\,
\,\Lambda^{n,m}_s(du)\\
\to \int_{\R_+^\ell}\bl(
\int_{\R_+^\ell}a (u')
\hat\varphi(u,du')
-a (u)
\bl(\hat\phi(u)+
R_s\hat\epsilon(u)\br)\br)\,
\tilde \lambda_s(du)\,.
\end{align*}
Let us note also that, by Lemma \ref{le:dyra}, for arbitrary $\gamma>0$\,,
\begin{equation*}
\lim_{n,m\to\infty}
\mathbf P\bl(\sup_{s\le t}\sum_i\sum_{k=1}^\ell\abs{
  a \bl(\frac{i+e_k}{n}\br)-a \bl(\frac{i}{n}\br)}
i_k\,\mu^{n,m,k}(i)\hat X^{n,m}_s(i)^2>\gamma)=0\,.
\end{equation*}
Therefore, by  \eqref{eq:11}, in distribution in $\mathbb
D(\R_+,\R)$\,,
\begin{equation}
  \label{eq:68}
  Z_s^{n,m}\to Z_s\,,
\end{equation}
where
\begin{multline*}
   Z_s=\sum_{k=1}^\ell \Bl(\int_{\R_+^\ell}
\partial_{u_k}a (u)
\,u_k(\hat\beta^{k}(u)-\hat\delta^{k}(u)-\hat\mu^k(u))\,
\tilde \lambda_s(du)\\
+\frac{\displaystyle\int_{\R_+^\ell} 
u_k\,\hat\mu^{k}(u)\,\tilde \lambda_s(du)}{
\displaystyle\tilde \lambda_s(\R_+^\ell)}
  \int_{\R_+^\ell} \partial_{u_k} a (u)
\,\tilde \lambda_s(du)\Br)
\\+\int_{\R_+^\ell}\bl(
\int_{\R_+^\ell}
a (u')\hat\varphi(u,du')
-a (u)\hat\phi(u)
-\tilde \lambda_s(\R_+^\ell)
a (u)\,\hat\epsilon(u)\br)\tilde \lambda_s(du)\,.
\end{multline*}
Since $Z_s$ has continuous trajectories owing to Lemmas
\ref{le:numberofgroups} and \ref{le:yule},
by \eqref{eq:9},  \eqref{eq:81}, \eqref{eq:8}, and \eqref{eq:68},
\begin{equation*}
  Y_t=Y_0+\int_0^tZ_s\,ds\,.
\end{equation*}
By continuity of $Z_s$\,, 
 $Y_t$ is  continuously differentiable with respect to $t$ and
 \begin{multline}
   \label{eq:4}
\frac{d}{dt}\,\int_{\R_+^\ell} a (u)\,\tilde
\lambda_t(du)=
\sum_{k=1}^\ell \Bl(\int_{\R_+^\ell}
\partial_{u_k}a (u)
u_k(\hat\beta^{k}(u)-\hat\delta^{k}(u)-\hat\mu^k(u))
\tilde\lambda_t(du)\\
+\frac{\displaystyle\int_{\R_+^\ell} u_k\,\hat\mu^{k}(u)\,\tilde\lambda_t(du)}{
\displaystyle\tilde \lambda_t(\R_+^\ell)}
  \int_{\R_+^\ell} \partial_{u_k} a (u)\tilde \lambda_t(du)\Br)
\\+\int_{\R_+^\ell}\bl(
\int_{\R_+^\ell}
a (u')\hat\varphi(u,du')
-a (u)\hat\phi(u)
-\tilde \lambda_t(\R_+^\ell)
a (u)\,\hat\epsilon(u)\br)\tilde\lambda_t(du)\,.
 \end{multline}
We now prove that $\tilde \lambda_t$ is specified uniquely.
Given $\nu\in\mathbb M_+(\R_+^\ell)$
such that $\nu(\R_+^\ell)>0$\,, we define, for 
$y\in\mathbb C^1_c(\R_+^\ell)$\,,
  \begin{multline*}
      A(\nu)y(u)=\sum_{k=1}^\ell \Bl(   
u_k(\hat\beta^{k}(u)-\hat\delta^{k}(u)-\hat\mu^k(u))
+\frac{\displaystyle\int_{\R_+^\ell} u_k'\,\hat\mu^{k}(u')
\nu(du')}{
\displaystyle\nu(\R_+^\ell)}\,\Br)\partial_{u_k} y(u)\,.
\end{multline*}
We also let, for bounded functions $y$\,,
\begin{equation}
  \label{eq:34}
      B(\nu) y(u)=
\int_{\R_+^\ell}
y(u')\hat\varphi(u,du')
-\bl(\hat\phi(u)+
\hat\epsilon(u)\nu(\R_+^\ell)\br)y(u)\,.
\end{equation}
We  write 
 \eqref{eq:4}  as 
 \begin{equation}
\label{eq:3}      \frac{d}{ds}\,\langle a,\tilde\lambda_s\rangle =
\langle   (A( \tilde \lambda_s) +B(\tilde \lambda_s))a,\tilde
\lambda_s\rangle\,.
 \end{equation}
(In the rest of the section,  $\langle\cdot,\cdot\rangle$ represents
 the pairing between $\mathbb L^\infty(\R_+^\ell)$ and
$\mathbb M(\R_+^\ell)$\,, with $\mathbb M(\R_+^\ell)$ denoting the set
of signed Borel measures on $\R_+^\ell$ with the total variation norm.)
Let $a_s(u)$\,, where $ s\in\R,\,u\in\R^\ell\,,$ 
represent a bounded and  continuously differentiable  function
 compactly supported in $u$  uniformly over
 $s$ from bounded intervals. Noting that
 \begin{equation*}
\frac{1}{\Delta s}\bl(\langle a_{s+\Delta s},\tilde\lambda_{s+\Delta s}\rangle-
\langle a_s,\tilde\lambda_s\rangle\br)
=\frac{1}{\Delta s}\langle a_{s+\Delta s}- a_s,
\tilde\lambda_{s+\Delta s}\rangle+
\frac{1}{\Delta s}\langle a_s,\tilde\lambda_{s+\Delta
s}- \tilde\lambda_s\rangle\,,
    \end{equation*}
letting $\Delta s\to0$
and recalling that $\tilde\lambda_s$ is continuous yield, by \eqref{eq:3}, cf. Luo and
Mattingly \cite{LuoMat17},
\begin{equation}
  \label{eq:45}
   \frac{d}{ds}\,\langle a_s,\tilde\lambda_s\rangle =
\langle \partial_sa_s+   (A( \tilde \lambda_s) +B(\tilde \lambda_s))a_s,\tilde
\lambda_s\rangle\,.
\end{equation}
Let
\begin{equation}
  \label{eq:40}
  \tilde F_t^k(u)=u_k(\hat\beta^{k}(u)-
\hat\delta^{k}(u)-\hat\mu^k(u))
+\frac{1}{
\displaystyle\tilde\lambda_t(\R_+^\ell)}\,
\displaystyle\int_{\R_+^\ell} 
u'_k\,\hat\mu^{k}(u')\,\tilde\lambda_t(du')\,,
\end{equation}
$\tilde F_t(u)=(\tilde F_t^1(u),\ldots,\tilde F_t^\ell(u))$
and let
$\tilde\psi_{s,t}(\nu,u)=\bl(\tilde\psi_{s,t}^{1}(u),\ldots,\tilde
\psi_{s,t}^{\ell}(u)\br)$\,,
where    $s\in\R$ and $t\in\R$\,, be defined by
 $\tilde\psi_{s,s}(u)=u$  and by
\begin{equation}
  \label{eq:44}
    \partial_t\tilde\psi_{s,t}(u)=
\tilde F_t(\tilde \psi_{s,t}(u))\,.
\end{equation}
Since
$u_k\hat\beta^k(u)$\,, $u_k\hat\delta^k(u)$\,, and $u_k\hat\mu^k(u)$ are
$\mathbb C^1$--functions with bounded derivatives 
and the ratio on the  righthand side of \eqref{eq:40} is
a continuous function of $t$\,, the function $\tilde\psi_{s,t}(\nu,u)$ is
continuously differentiable in $(s,t,u)$ with 
the   $u$--derivatives being  uniformly bounded
locally uniformly in $(s,t)$, see, e.g., Theorem 3.1 on
p.95 in Hartman \cite{Har64}.
Let,  for $f\in\mathbb \R^{\mathbb \R_+^\ell}$\,,
\begin{equation}
  \label{eq:67}
\tilde U_{s,t} f(u)=f(\tilde\psi_{s,t}(u))\,,   
\end{equation}
where 
 $u\in\R_+^\ell$\,.
For $f\in\mathbb C^1_c(\R_+^\ell)$\,, $\tilde U_{s,t}f(u)$ is continuously
differentiable in $(s,t,u)$ and
\begin{equation}
      \label{eq:37}
      \partial_s\tilde U_{s,t}f=-A(\tilde\lambda_s)\tilde U_{s,t}f\,,
\end{equation}
the temporal derivatives on the lefthand side being for the $\sup$--norm.
 (One way to ascertain the equation is to use  the flow
property that $\psi_{s,t}=\psi_{r,t}\circ \psi_{s,r}$\,.)
We now draw on Luo and Mattingly \cite{LuoMat17} by letting in
\eqref{eq:45}, for $t$ fixed,
$a_s=\tilde U_{s,t}f$\,, where $f\in
\mathbb C_c^1(\R_+^\ell)$\,. 
By \eqref{eq:67} and 
\eqref{eq:37}, $\partial_s a_s=-A(\tilde\lambda_s)a_s$\,, $a_t=f$\,,
and $a_0=\tilde U_{0,t}f$\,.
By \eqref{eq:45} and the fact that $\tilde\lambda_0=\hat\lambda_0$\,,
for all $0\le s\le t$\,,
\begin{equation}
  \label{eq:29}
  \langle f,\tilde\lambda_t\rangle=\langle
  \tilde U_{0,t}f,\hat\lambda_0\rangle
+\int_0^t
\langle B(\tilde \lambda_s)\tilde U_{s,t}f,\tilde
\lambda_s\rangle\,ds\,.
\end{equation}
Via limits akin to the
construction of the Daniell integral, this equality extends to bounded Borel
functions $f$\,.
Let $\breve\lambda_t$ represent another limit point of
$\Lambda^{n,m}$ and let $\breve \psi_{s,t}$ and 
$\breve U_{s,t}$ be defined in analogy with
$\tilde\psi_{s,t}$ and $\tilde U_{s,t}$\,, respectively, with $\breve\lambda_t$ as $\tilde\lambda_t$\,, so,
\begin{equation*}
    \langle f,\breve\lambda_t\rangle=\langle
  \breve U_{0,t}f,\hat\lambda_0\rangle
+\int_0^t
\langle B(\breve \lambda_s)\breve U_{s,t}f,\breve
\lambda_s\rangle\,ds\,.
\end{equation*}
We have that
\begin{multline}
  \label{eq:36}
    \langle f,\tilde\lambda_t-\breve\lambda_t\rangle=
\langle
\tilde U_{0,t}f-  \breve U_{0,t}f,\hat\lambda_0\rangle+
\int_0^t \langle B(\tilde\lambda_s)\tilde U_{s,t}f,
\tilde\lambda_s-\breve\lambda_s\rangle ds\\
+\int_0^t \langle( B(\tilde\lambda_s)- B(\breve\lambda_s))
\tilde U_{s,t}f,\breve\lambda_s\rangle ds
+\int_0^t \langle B(\breve\lambda_s)
(\tilde U_{s,t}f-\breve U_{s,t}f)\,,\breve\lambda_s\rangle ds\,.
\end{multline}
Let $\rho_w$ represent the Lipschitz metric for the weak topology
on $\R_+^\ell$\,, see, e.g., Dudley \cite{Dud89}. Since, by Lemma
\ref{le:numberofgroups} and \eqref{eq:19}, we may assume that
$\tilde\lambda_t(\R_+^\ell)$ and $\breve\lambda_t(\R_+^\ell)$ are
locally bounded away from zero,
 since the $u'_k\hat\mu_k(u')$ are bounded and Lipschitz--continuous, and
since the derivatives of the $u_k(\hat\beta^{k}(u)-
\hat\delta^{k}(u)-\hat\mu^k(u))$ are bounded, by
\eqref{eq:40}, given $T>0$\,, there exists  $\kappa>0$ such that, for
all $t\in[0,T]$ and all $u\in\R_+^\ell$\,,
\begin{equation*}
  \norm{\tilde F^k_t(u)-\breve F^k_t(u')}\le
 \kappa(\abs{u-u'}+\rho_w(\tilde\lambda_t,\breve
  \lambda_t))\,. 
\end{equation*}
By \eqref{eq:44} and the analogue for $\breve\psi_{s,t}$\,,
\begin{multline*}
      \tilde\psi_{s,t}(u)-\breve\psi_{s,t}(u)=
\int_s^t(\tilde F_{s'}(\tilde \psi_{s,s'}(u))-
\breve F_{s'}(\breve \psi_{s,s'}(u)))\,ds'\\=
\int_s^t(\tilde F_{s'}(\tilde \psi_{s,s'}(u))-
\tilde F_{s'}(\breve \psi_{s,s'}(u)))\,ds'+
\int_s^t (\tilde F_{s'}(\breve \psi_{s,s'}(u))-
\breve F_{s'}(\breve \psi_{s,s'}(u)))\,ds'\,.
\end{multline*}
Therefore,
\begin{equation}
  \label{eq:15}
    \abs{\tilde\psi_{s,t}(u)-\breve\psi_{s,t}(u)}\le
\kappa\int_s^t\abs{\tilde \psi_{s,s'}(u)-
\breve \psi_{s,s'}(u)}\,ds'+
\kappa\int_s^t\rho_w(\tilde\lambda_{s'},\breve\lambda_{s'})\,ds'\,,
\end{equation}
so, by Gronwall's inequality,
\begin{equation}
  \label{eq:17}
      \abs{\tilde\psi_{s,t}(u)-\breve\psi_{s,t}(u)}\le
\kappa 
e^{\kappa(t-s)}\int_s^t\rho_w(\tilde\lambda_{s'},\breve\lambda_{s'})\,ds'\,.
\end{equation}
Suppose that  $f$ is bounded above by one in absolute value
and is Lipschitz continuous with a Lipschitz constant
 one.
By \eqref{eq:67} and \eqref{eq:15},  for the $\sup$--norm on $\R^{\R_+^\ell}$\,,
\begin{equation}
  \label{eq:33}
  \norm{\tilde U_{s,t}f-\breve U_{s,t}f}\le 
\kappa
e^{\kappa(t-s)}\int_s^t\rho_w(\tilde\lambda_{s'},\breve\lambda_{s'})\,ds'\,.
\end{equation}
By  \eqref{eq:34}, by \eqref{eq:36}, by \eqref{eq:33},  by
 $U_{s,t}$ being  a contraction for the $\sup$--norm, and by the
 definition of $\rho_w$\,,
 on recalling that $\sup_{u\in\R_+^\ell}\hat\varphi(u,\R_+^\ell)<\infty$\,,
we have that  there exists $K>0$ such that, for all $t\le T$\,,   
\begin{equation*}
\langle f,\tilde\lambda_t-\breve\lambda_t\rangle\le  
K\int_0^t\rho_w(\tilde
\lambda_s,\breve\lambda_s)\,ds\,,
\end{equation*}
so, on maximising over $f$\,,
\begin{equation*}
  \rho_w(\tilde\lambda_t,\breve\lambda_t)
\le
K\int_0^t\rho_w(\tilde
\lambda_s,\breve\lambda_s)\,ds\,.
\end{equation*}
By Gronwall's inequality, $\tilde\lambda_t=\breve\lambda_t$\,. Thus,
\eqref{eq:4} has a unique solution, which concludes the
convergence proof. We have also proved that $\hat\lambda_t$ is
specified uniquely by \eqref{eq:4} which is the same equation as \eqref{eq:13}.
Besides, we  can and will refer to $\tilde F_t$\,, $\tilde \psi_{s,t}$\,,
and
 $\tilde U_{s,t}$ as 
$ F_t$\,, $ \psi_{s,t}$\,,
and
 $ U_{s,t}$\,, respectively.

Let us prove that if $\hat\lambda_0$ admits a density with respect
to Lebesgue measure, then $\hat\lambda_t$ does too. Let
$\mathcal{N}$ denote the set of Lebesgue measurable
 functions on $\R_+^\ell$
 that are not greater than one in absolute value and
are  equal to zero a.e. with respect to the Lebesgue measure.
Let $f\in\mathcal{N}$\,.
By the uniqueness  of solutions to \eqref{eq:44},
 $\psi_{s,t}^{-1}(u)$ is well defined, satisfies the
initial condition 
$\psi_{s,s}^{-1}(u)=u$ and the
version of
 \eqref{eq:44} in reverse time
 \begin{equation*}
       \partial_t\psi_{s,t}^{-1}(u)=-
F_t(\psi_{s,t}^{-1}(u))\,.
 \end{equation*}
Therefore, $\psi_{s,t}^{-1}(u)$ is of class $C^1$ in $u$\,.
 Since $\{u:\,U_{s,t}f(u)\not=0\}=\psi_{s,t}^{-1}\{u:\,f(u)\not=0\}$ and
 sets of  Lebesgue measure zero  are
preserved under $C^1$--maps, see, e.g., Lemma 1.1 on p.68 in 
Hirsch \cite{Hir94}, $U_{s,t}f=0$ a.e.
As $\hat\lambda_0$ admits a density, $\langle
U_{0,t}f,\hat\lambda_0\rangle=0$\,.
 By \eqref{eq:29}, for some $K>0$\,,
\begin{equation*}
\esssup_{f\in\mathcal{N}}\abs{    \langle f,\hat\lambda_t\rangle}\le
K\int_0^t
\esssup_{f\in\mathcal{N}}\abs{\langle U_{s,t}f,\hat
\lambda_s\rangle}\,ds
\le K\int_0^t
\esssup_{f\in\mathcal{N}}\abs{\langle f,\hat
\lambda_s\rangle}\,ds\,.
\end{equation*}By Gronwall's inequality,
 $\abs{\langle f,\hat
\lambda_s\rangle}=0$ when $f\in\mathcal{N}$\,, so, $\hat\lambda_s$
has a density which we denote by $\hat x_s(u)$\,.
Part 1 has been proved.

We prove part 2.
Since the $\beta^{k}(u)$ and $\delta^{k}(u)$ are bounded and
$\sup_{s\le t}\int_{\R_+^\ell}\abs{u}\hat\lambda_s(du)<\infty$\,, for all $t>0$\,,
by approximation, \eqref{eq:13} 
  holds for    $f(u)=\abs{u}$ so that
\begin{multline}
  \label{eq:56}
    \frac{d}{dt}\,\int_{\R_+^\ell}\abs{u}\hat\lambda_t(du)=
\int_{\R_+^\ell}\Bl(u\cdot\bl(\hat\beta(u)-\hat\delta(u)\br)
+
\int_{\R_+^\ell}
\abs{u'}\hat\varphi(u,du')\\
-\abs{u}\bl(\hat\phi(u)+\hat\epsilon(u)
\hat\lambda_t(\R_+^\ell)\br)\Br)\hat\lambda_t(du)\,.
\end{multline}
By \eqref{eq:91}, 
$ \int_{\R_+^\ell}\abs{u'}\hat\varphi(u,du')=\abs{u}\hat\phi(u)$\,.
Substitution in \eqref{eq:56} yields
\begin{equation*}
  \frac{d}{dt}\,\int_{\R_+^\ell} \abs{u}\, \hat\lambda_t(du) =
\int_{\R_+^\ell}
\, u\,\cdot(\hat\beta(u)-\hat\delta(u))
 \hat\lambda_t(du) 
-\hat\lambda_t(\R_+^\ell) \,\int_{\R_+^\ell}
\abs{u}\,\hat\epsilon(u)\hat \lambda_t(du)\,.
\end{equation*}
On the other hand,    \eqref{eq:12} can be written as
\begin{multline*}
  \int_{\R_+^\ell}\abs{u}\Lambda^{n,m}_t(du)=
\int_{\R_+^\ell}\abs{u}\Lambda^{n,m}_0(du)
+\int_0^t
\int_{\R_+^\ell}u\cdot\bl(\hat\beta^{n,m}(u)
-\hat\delta^{n,m}(u)\br)
\Lambda^{n,m}_s(du)\,ds
\\-\int_0^t\Lambda^{n,m}_s(\R_+^\ell)
\int_{\R_+^\ell}\abs{u}\hat\epsilon^{n,m}(u)
\Lambda^{n,m}_s(du)\,ds
+\frac{1}{mn}\, \overline{N}^{n,m}_t\,,
\end{multline*}
where
$\sup_{t\le L}
\abs{\overline N^{n,m}_t}/(mn)\to0$ in probability, as
$m,n\to\infty$\,, the latter convergence being proved in analogy with
\eqref{eq:62}. 
By
Lemma \ref{le:numberofgroups} and Lemma \ref{le:yule},
 in
probability, 
\begin{multline*}
  \int_{\R_+^\ell}\abs{u}\Lambda^{n,m}_t(du) 
\to \int_{\R_+^\ell}\abs{u}\hat \lambda_0(du)
+\int_0^t\int_{\R_+^\ell}u\cdot (\hat\beta(u)
-\hat\delta(u))
\hat \lambda_s(du) \,ds
\\-\int_0^t\hat \lambda_s(\R_+^\ell)
\int_{\R_+^\ell}\abs{u}\hat\epsilon(u)
\hat \lambda_s(du) \,ds\,.
\end{multline*}
Therefore, in probability in $\mathbb D(\R_+,\R)$\,,
\begin{equation*}
    \int_{\R_+^\ell}\abs{u}\Lambda^{n,m}_t(du) 
\to \int_{\R_+^\ell}\abs{u}\hat\lambda_t(du) \,.
\end{equation*}
By Lemma \ref{le:yule},  the  convergence holds locally
uniformly in $t$\,.
The other assertion of part 2 is proved similarly.
Part 2 has been proved.

We address now the regularity properties of $\hat x_s$\,, so, we
assume the hypotheses of part 3 of the theorem to hold.
Since $u\to \psi_{s,t}(u)$ is a diffeomorphism, given 
 $f\in \mathbb L^\infty(\R_+^\ell)$ and $z\in \mathbb
L^1(\R_+^\ell)$\,, 
by a change of variables, see, e.g., Theorem 2.6 on
p.505 in Lang \cite{Lan93}, 
$\langle U_{s,t}f,z\rangle_1=\langle f,z\!\circ\! \psi_{s,t}^{-1}\,
J(\psi_{s,t}^{-1})\rangle_1$\,, where
$\langle\cdot,\cdot\rangle_1$ represents the pairing between 
$\mathbb L^\infty$ and $\mathbb L^{1}$ and $J(\psi_{s,t}^{-1})$ denotes
the absolute value of the Jacobian determinant of $\psi_{s,t}^{-1}$\,.
Let us denote 
\begin{equation}
  \label{eq:31}
U^\ast_{s,t}z=z\!\circ\! \psi_{s,t}^{-1}\,
J(\psi_{s,t}^{-1})\,.
\end{equation}
It is a bounded operator on $\mathbb W^{1,\infty}(\R_+^\ell)$\,, with
norms being  bounded locally uniformly, cf., Proposition 9.6 on p.270
in Brezis \cite{Bre11}. (The $u$--derivatives of
$\psi^{-1}_{s,t}(u)$ are bounded locally uniformly in  $s$ and $t$\,.)
By \eqref{eq:34}, the  $B(\hat\lambda_s)$ are
 bounded operators on $\mathbb
W^{1,\infty}(\R_+^\ell)$ too, with locally uniformly bounded
norms.

By \eqref{eq:29},
for $f\in \mathbb L^\infty(\R_+^\ell)$\,,
\begin{equation*}
      \langle f,\hat x_t\rangle_1=\langle
  f,U_{0,t}^\ast\hat x_0\rangle_1
+\int_0^t
\langle f,U_{s,t}^\ast B(\hat\lambda_s)\hat x_s\rangle_1\,ds\,,
\end{equation*}
so, for almost all $u$\,,
\begin{equation}
  \label{eq:58}
    \hat x_t(u)=U_{0,t}^\ast\hat x_0(u)
+\int_0^tU_{s,t}^\ast B(\hat\lambda_s)\hat x_s(u)\,ds\,.
\end{equation}
We can therefore redefine $\hat x_t(u)$ as the latter righthand side,
which
makes it a continuous function of $t$ for all $u\in\R_+^\ell$\,.
Continuity of $\hat\lambda_t(\R_+)$ implies, as in the proof of 
 Scheffe's theorem,  that
 $\hat x_t $ is continuous in $t$ in $\mathbb
L^1(\R_+^\ell)$ for the strong topology. In particular, $\int_0^tU_{s,t}^\ast
B(\hat\lambda_s)\hat x_s\,ds$ is well defined as a Riemann integral in
$\mathbb L^1(\R^\ell)$ and
\begin{equation}
  \label{eq:85}
      \hat x_t=U_{0,t}^\ast\hat x_0
+\int_0^tU_{s,t}^\ast B(\hat\lambda_s)\hat x_s\,ds\,.
\end{equation}
 As in the proof of Lemma 4.5 on p.142 in Pazy \cite{Paz83}, see also
 Theorem 9.19 on p.488 in Engel and Nagel \cite{EngNag00}, there
exists  family  $V_{t}$ of bounded linear operators
on $\mathbb
W^{1,\infty}(\R_+^\ell)$  such that, for  $z\in\mathbb
W^{1,\infty}(\R_+^\ell)$\,,  
\begin{equation}
  \label{eq:39}
  V_{t}z=  U_{0,t}^\ast z+\int_0^t
 U_{s,t}^\ast B(\hat\lambda_s)
V_{ s}z\,d s\,.
\end{equation}
Specifically, one defines
\begin{align*}
V^{(0)}_{t}z=U_{0,t}^\ast z\,,\quad
    V^{(m)}_{t}z&=\int_0^t U^\ast_{s,t} B(\hat\lambda_s)
V_{s}^{(m-1)}z\,d s \intertext{and}
  V_{t}z&=\sum_{m=0}^\infty V^{(m)}_{t}z\,,
\end{align*}
the convergence holding in $\mathbb W^{1,\infty}(\R_+^\ell)$ because, as
induction shows, $\norm{V^{(m)}_t}\le K_1\\K_2^mt^m/m!$\,, where 
$K_1$ is an upper bound for $\norm{U_{0,t}^\ast}$ and $K_2$ is an
upper bound for
 $\norm{U_{s,t}^\ast}\norm{B(\hat\lambda_s)}$\,.
Let $\overline x_t=V_{t}\hat x_0$\,. By \eqref{eq:85},
\eqref{eq:39}, and the uniqueness of $\hat\lambda_t$\,,
$\overline x_t=\hat x_t$ as elements of $\mathbb L^1(\R_+^\ell)$\,, so $\hat x_t\in\mathbb
W^{1,\infty}(\R_+^\ell)$\,. By \eqref{eq:31}, $U^\ast_{s,t}z(u)$ is
Lipschitz--continuous   with respect to $t$\,, provided $z\in\mathbb
W^{1,\infty}(\R_+^\ell)$\,.
 By \eqref{eq:58}, $\hat x_t(u)$ is Lipschitz--continuous with respect
 to $t$\,.
Now \eqref{eq:28} is obtained from \eqref{eq:13} via integration by parts.
Part 3 has been proved.
\begin{remark}
  One can  see  that $U_{t,s}=U_{t,r}\circ U_{r,s}$ when
$s\le r\le t$\,, so, $U_{t,s}$ is an evolution system on
$\mathbb \R^{\R_+^\ell}$\,, see Engel and Nagel \cite{EngNag00} or 
Pazy \cite{Paz83} for the definitions.
\end{remark}
\section{Proof of Theorem \ref{the:lln2}}
\label{sec:proof-theor-refth}

The proof proceeds along similar lines to the one of Theorem
\ref{the:lln}. We give the main points.
Let, in analogy with \eqref{eq:19},
\begin{equation*}
  \check R^{n,m}_t=\frac{1}{mn^\ell}\,\sum_i X^{n,m}_t(i)=\int_{\R_+^\ell}\hat X^{n,m}_t(u)\,du\,.
\end{equation*}
The next three lemmas are proved similarly to 
Lemma \ref{le:numberofgroups},
 Lemma \ref{le:yule}, and Lemma \ref{le:dyra}, respectively.
\begin{lemma}
  \label{le:numberofgroups2}
 The sequence $(\check R^{n,m}_t\,,t\ge0)$ is $C$--tight
and, given $t>0$\,, there exists $\rho>0$ such that
$\mathbf P(\inf_{s\le t}\check R^{n,m}_s>\rho)\to1$\,, as $n,m\to\infty$\,.
\end{lemma}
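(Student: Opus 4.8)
The plan is to transcribe the proof of Lemma~\ref{le:numberofgroups} almost verbatim, with the normalising factor $m$ replaced by $mn^\ell$ throughout and with the boundedness hypotheses of Theorem~\ref{the:lln2} used in place of those of Theorem~\ref{the:lln}; the essential change is that now $mn^\ell\epsilon^{n,m}(i)$, rather than $m\epsilon^{n,m}(i)$, is bounded, which is exactly what keeps the predictable quadratic variations of the relevant martingales of order $1/(mn^\ell)$. Since $\check R^{n,m}_t=X^{n,m,\ast}_t/(mn^\ell)$ and, pairing the hypothesis $\hat X^{n,m}_0\to\check x_0$ with the constant $1\in\mathbb L^\infty(\R_+^\ell)$ (just as $R^{n,m}_0\to R_0$ is obtained from $\Lambda^{n,m}_0\to\hat\lambda_0$ right after \eqref{eq:19}), $\check R^{n,m}_0\to\int_{\R_+^\ell}\check x_0(u)\,du>0$ in probability, the proof splits, as in Lemma~\ref{le:numberofgroups}, into an upper tightness bound, a lower bound away from zero, and a modulus-of-continuity estimate.

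For the upper bound I would start from the analogue of \eqref{eq:27} and the inequality $\sum_i\Delta F^{n,m}_t(i',i)\le b\,\Delta\overline F^{n,m}_t(i')$ to get $\Delta\check R^{n,m}_t\le\Delta\overline{\check R}^{n,m}_t$, where $\overline{\check R}^{n,m}_t=\check R^{n,m}_0+\frac{b}{mn^\ell}\sum_i\overline F^{n,m}_t(i)-\frac1{mn^\ell}\sum_i E^{n,m}_t(i)$. By \eqref{eq:18} this is a semimartingale with drift $\int_0^t\sum_i\frac{X^{n,m}_s(i)}{mn^\ell}\bl(b\phi^{n,m}(i)-\check R^{n,m}_s\,(mn^\ell\epsilon^{n,m}(i))\br)\,ds$ and martingale part $N^{\overline{\check R},n,m}$ with $\langle N^{\overline{\check R},n,m}\rangle_t=\frac1{(mn^\ell)^2}\int_0^t\sum_i X^{n,m}_s(i)\bl(b^2\phi^{n,m}(i)+X^{n,m,\ast}_s\epsilon^{n,m}(i)\br)\,ds$. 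Applying the It\^o formula to $(\overline{\check R}^{n,m}_t)^2$, localising, and using Gronwall's inequality and Fatou's lemma just as in the passage leading to \eqref{eq:14} yields $\mathbf E(\overline{\check R}^{n,m}_t)^2\mathbf 1_{\{\check R^{n,m}_0\le K_2\}}\le(K_2^2+K_0t/(mn^\ell))e^{K_0t}$. Since $X^{n,m,\ast}_s\epsilon^{n,m}(i)$ equals $\check R^{n,m}_s$ times the bounded quantity $mn^\ell\epsilon^{n,m}(i)$ and $\sum_i X^{n,m}_s(i)\epsilon^{n,m}(i)\le C\,X^{n,m,\ast}_s/(mn^\ell)$, this bound forces $\langle N^{\overline{\check R},n,m}\rangle_t\to0$ in probability, hence $\sup_{s\le t}\abs{N^{\overline{\check R},n,m}_s}\to0$ by the Lenglart--Rebolledo inequality, and Gronwall applied to $\overline{\check R}^{n,m}$ then gives $\lim_{K\to\infty}\limsup_{n,m\to\infty}\mathbf P(\sup_{s\le t}\check R^{n,m}_s>K)=0$.

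For the lower bound I would rewrite $\check R^{n,m}_t$ itself, via \eqref{eq:18} and \eqref{eq:27}, as the analogue of \eqref{eq:24}: initial value $\check R^{n,m}_0$, a drift built from $\sum_{i'}\eta^{n,m}(i,i')\phi^{n,m}(i)-\phi^{n,m}(i)-X^{n,m,\ast}_s\epsilon^{n,m}(i)$ together with a death term in the $e_k$--groups, minus $\sum_k N^{D,n,m,k}_t(e_k)/(mn^\ell)$, plus a martingale $N^{R,n,m}_t/(mn^\ell)$ whose normalised quadratic variation tends to zero in probability (using $\sum_{i'}\theta^{n,m}_i(i',1)\le b$ and the previous estimates). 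Since $\sum_{i'}\eta^{n,m}(i,i')\le b$ (the expected number of offspring never exceeds $b$, cf.\ Lemma~\ref{le:conserv}) and $mn^\ell\epsilon^{n,m}(i)$ is bounded, I can, as in the argument leading from \eqref{eq:99} to \eqref{eq:88}, add and subtract $K_3\check R^{n,m}_s$ with $K_3$ large enough that the remaining integrand is nonnegative on $\{\sup_{s\le t}\check R^{n,m}_s\le K\}$; solving the resulting linear differential inequality gives $\check R^{n,m}_t\ge e^{-K_3t}\check R^{n,m}_0-\sup_{s\le t}\abs{N^{R,n,m}_s}/(mn^\ell)-\sum_k\sup_{s\le t}\abs{N^{D,n,m,k}_s(e_k)}/(mn^\ell)$, and the two $\sup$ terms vanish in probability by the Lenglart--Rebolledo inequality. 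Because $\check R^{n,m}_0\to\int_{\R_+^\ell}\check x_0(u)\,du>0$ in probability, this shows that the $\check R^{n,m}_t$ are locally uniformly asymptotically bounded away from zero, which furnishes the required $\rho$.

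$C$--tightness then follows from the same decomposition: on the asymptotically overwhelming event that $\sup_{s\le t}\check R^{n,m}_s$ is controlled, the drift rate in the analogue of \eqref{eq:24} is bounded, so $\lim_{\sigma\to0}\limsup_{n,m\to\infty}\mathbf P\bl(\sup_{s,s'\le t,\,\abs{s-s'}\le\sigma}\abs{\check R^{n,m}_s-\check R^{n,m}_{s'}}>\gamma\br)=0$, which, with the boundedness already established, yields $C$--tightness. The only step demanding genuine (though entirely routine) attention is this scaling bookkeeping: everything works precisely because $X^{n,m,\ast}_s\epsilon^{n,m}(i)$ is a bounded multiple of $\check R^{n,m}_s$, whence $\frac1{(mn^\ell)^2}\sum_i X^{n,m}_s(i)X^{n,m,\ast}_s\epsilon^{n,m}(i)\le\frac{C}{mn^\ell}(\check R^{n,m}_s)^2$, so it is the hypothesis that $mn^\ell\epsilon^{n,m}(i)$ is bounded that lets the argument of Lemma~\ref{le:numberofgroups} carry over with $m$ replaced by $mn^\ell$; I do not expect any further obstacle.
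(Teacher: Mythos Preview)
Your proposal is correct and matches the paper's own approach: the paper simply states that Lemma~\ref{le:numberofgroups2} is proved similarly to Lemma~\ref{le:numberofgroups}, and you have correctly identified that the proof transcribes verbatim once $m$ is replaced by $mn^\ell$ and the boundedness of $mn^\ell\epsilon^{n,m}(i)$ (the Theorem~\ref{the:lln2} hypothesis) is used in place of that of $m\epsilon^{n,m}(i)$. Your scaling bookkeeping is right, and in particular your observation that $X^{n,m,\ast}_s\epsilon^{n,m}(i)=\check R^{n,m}_s\cdot mn^\ell\epsilon^{n,m}(i)$ is the key identity that makes all the estimates carry over.
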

\begin{lemma}
  \label{le:yule2}
The sequence  $\bl(\int_{\R_+^\ell}\abs{u}
\hat X^{n,m}_t(u)\,du\,, t\ge0\br)$ is $C$--tight.
\end{lemma}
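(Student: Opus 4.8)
The plan is to follow the proof of Lemma \ref{le:yule} essentially verbatim, replacing the measure $\Lambda^{n,m}$ by the density $\hat X^{n,m}$ and Lemma \ref{le:numberofgroups} by Lemma \ref{le:numberofgroups2}. First I would observe that, since $\lfloor nu\rfloor/n\le u$ componentwise and $\abs{u}\le\abs{\lfloor nu\rfloor}/n+\ell/n$\,,
\begin{equation*}
  \frac{1}{mn^{\ell+1}}\sum_i\abs{i}X^{n,m}_t(i)\le
  \int_{\R_+^\ell}\abs{u}\hat X^{n,m}_t(u)\,du\le
  \frac{1}{mn^{\ell+1}}\sum_i\abs{i}X^{n,m}_t(i)+\frac{\ell}{n}\,\check R^{n,m}_t\,.
\end{equation*}
Since Lemma \ref{le:numberofgroups2} makes $(\check R^{n,m}_t\,,t\ge0)$ $C$--tight and $\ell/n\to0$\,, it then suffices to prove $C$--tightness of $\bl((mn^{\ell+1})^{-1}\sum_i\abs{i}X^{n,m}_t(i)\,,t\ge0\br)$\,.

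Next, as in Lemma \ref{le:yule}, I would start from the balance identity \eqref{eq:12}, in which only the birth, death, and extinction counts survive. Dividing by $mn^{\ell+1}$ and discarding the nonpositive death and extinction terms yields
\begin{equation*}
  \frac{1}{mn^{\ell+1}}\sum_i\abs{i}X^{n,m}_t(i)\le
  \frac{1}{mn^{\ell+1}}\sum_i\abs{i}X^{n,m}_0(i)+
  \int_0^t\int_{\R_+^\ell}\frac{\lfloor nu\rfloor}{n}\cdot\hat\beta^{n,m}(u)\,\hat X^{n,m}_s(u)\,du\,ds+
  \frac{N^{B,n,m}_t}{mn^{\ell+1}}\,,
\end{equation*}
where $N^{B,n,m}_t=\sum_i\sum_{k=1}^\ell N^{B,n,m,k}_t(i)$ is, by \eqref{eq:18} and \eqref{eq:28a}, a locally square integrable martingale with $\langle N^{B,n,m}\rangle_t=mn^{\ell+1}\int_0^t\int_{\R_+^\ell}(\lfloor nu\rfloor/n)\cdot\hat\beta^{n,m}(u)\hat X^{n,m}_s(u)\,du\,ds$\,. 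Because $\lfloor nu_k\rfloor/n\le u_k$ and the $u_k\hat\beta^{n,m,k}(u)$ are bounded under the hypotheses of Theorem \ref{the:lln2}, the drift integrand is dominated by a constant multiple of $\hat X^{n,m}_s(u)$\,, so the drift is at most a constant times $\int_0^t\check R^{n,m}_s\,ds$\,; a stopping--time argument together with the martingale property, Gronwall's inequality, and Fatou's lemma (using the assumed convergence of $\int_{\R_+^\ell}\abs{u}\hat X^{n,m}_0(u)\,du$) then bounds $\mathbf E\int_{\R_+^\ell}\abs{u}\hat X^{n,m}_t(u)\,du$ on the event that the initial moment is bounded. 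Since $\langle N^{B,n,m}\rangle_t/(mn^{\ell+1})^2$ is bounded by a constant multiple of $(mn^{\ell+1})^{-1}\int_0^t\check R^{n,m}_s\,ds$ and hence tends to $0$ in probability, the Lenglart--Rebolledo inequality gives $(mn^{\ell+1})^{-1}\sup_{s\le t}\abs{N^{B,n,m}_s}\to0$ in probability, and a further application of Gronwall's inequality upgrades the estimate to $\lim_{K\to\infty}\limsup_{n,m\to\infty}\mathbf P(\sup_{s\le t}\int_{\R_+^\ell}\abs{u}\hat X^{n,m}_s(u)\,du>K)=0$\,.

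For equicontinuity I would again use \eqref{eq:12} and \eqref{eq:18}: for $s\le t$ the increment of $(mn^{\ell+1})^{-1}\sum_i\abs{i}X^{n,m}_\cdot(i)$ is bounded by $t-s$ times a supremum over $\tilde s\in[s,t]$ of $\int_{\R_+^\ell}\bl((\lfloor nu\rfloor/n)\cdot(\hat\beta^{n,m}(u)+\hat\delta^{n,m}(u))+\abs{u}\,\check\epsilon^{n,m}(u)\,\check R^{n,m}_{\tilde s}\br)\hat X^{n,m}_{\tilde s}(u)\,du$ plus $(mn^{\ell+1})^{-1}$ times increments of the martingales associated with births, deaths, and extinctions; these martingale terms vanish uniformly on $[0,L]$ in probability exactly as $N^{B,n,m}$ does, and the drift supremum is controlled by the uniform--in--time bound just established and by Lemma \ref{le:numberofgroups2}. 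Together with the uniform boundedness this yields $C$--tightness.

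The point that actually requires care — the rest being a mechanical transcription of Lemma \ref{le:yule} — is the scaling arithmetic: one must check that the $1/(mn)$ normalisation of the $\abs{u}$--moment in Lemma \ref{le:yule} is replaced here by $1/(mn^{\ell+1})$\,, that this is exactly the power that makes $\langle N^{B,n,m}\rangle_t/(mn^{\ell+1})^2\to0$ and reduces the drift to bounded multiples of $\check R^{n,m}$ under the standing hypotheses of Theorem \ref{the:lln2}\,, and that it is $\check\epsilon^{n,m}(u)=mn^\ell\epsilon^{n,m}(\lfloor nu\rfloor)$ (not $\hat\epsilon^{n,m}$) whose boundedness keeps the extinction contributions under control; one should also note, as in Lemma \ref{le:yule}, that the extinction count enters \eqref{eq:12} weighted by $\abs{i}$\,, so the corresponding martingale and its predictable quadratic variation carry that weight.
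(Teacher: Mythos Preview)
Your proposal is correct and matches the paper's own approach: the paper simply states that Lemma~\ref{le:yule2} is proved similarly to Lemma~\ref{le:yule}, and your write-up carries out exactly that transcription, with the correct rescaling $1/(mn^{\ell+1})$ in place of $1/(mn)$ and $\check\epsilon^{n,m}$, $\check R^{n,m}$ in place of $\hat\epsilon^{n,m}$, $R^{n,m}$. The only minor imprecision is that the boundedness of $u_k\hat\beta^{n,m,k}(u)$ comes from the standing assumption that $i_k\beta^{n,m,k}(i)$ is bounded (stated before both theorems), not from the hypotheses specific to Theorem~\ref{the:lln2}; also, when you bound the drift by a constant times $\int_0^t\check R^{n,m}_s\,ds$ rather than by the $\abs{u}$--moment itself, Gronwall is no longer needed at that step --- but either route works.
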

\begin{lemma}
  \label{le:dyra1}
For all $L>0$\,,
\begin{equation*}\lim_{K\to\infty}
\limsup_{n,m\to\infty}
\mathbf P(\sup_{t\in[0,L]}\sum_i\abs{ X^{n,m}_t(i)}^2
>Km^2n^\ell)=0\,.
\end{equation*}
\end{lemma}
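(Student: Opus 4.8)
The plan is to repeat, \emph{mutatis mutandis}, the proof of Lemma~\ref{le:dyra} in Section~\ref{sec:proof-theorem-}, replacing the scaling $m^2$ by $m^2n^\ell$ throughout and exploiting the stronger hypotheses of Theorem~\ref{the:lln2}: besides $m\epsilon^{n,m}(i)$ and $\phi^{n,m}(i)$ being bounded, the functions $mn^\ell\epsilon^{n,m}(i)$ and $\phi^{n,m}(i)n^\ell\eta^{n,m}(i,i')$ are bounded, so that $\epsilon^{n,m}(i)\le c/(mn^\ell)$ and $\phi^{n,m}(i)\eta^{n,m}(i,i')\le c/n^\ell$ for some constant $c$; and by Lemma~\ref{le:numberofgroups2} the process $\check R^{n,m}_t=X^{n,m,\ast}_t/(mn^\ell)$ is $C$--tight and, for each $L$, locally uniformly bounded away from zero in probability. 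The decomposition \eqref{eq:52a}, the expression \eqref{eq:57a} for $N^{n,m}(i)$, and the It\^o identities \eqref{eq:29'}--\eqref{eq:77} concern a single model and carry over verbatim; one writes $X^{n,m}_t(i)=X^{n,m}_0(i)+A^{n,m}_t(i)+N^{n,m}_t(i)$ as before.

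The first substantive step is to re-derive the analogues of \eqref{eq:72} and \eqref{eq:76}. In $\sum_i\int_0^tX^{n,m}_s(i)\,dA^{n,m}_s(i)$ the birth, death, and within/cross--group migration contributions still produce only a multiple of $\int_0^t\sum_iX^{n,m}_s(i)^2(1+\check R^{n,m}_s)\,ds$, because $i_k(\beta^{n,m,k}(i)+\delta^{n,m,k}(i)+\mu^{n,m,k}(i))$ is bounded and the migration ratios do not exceed $1$; the fission term $\sum_iX^{n,m}_s(i)\sum_{i'}X^{n,m}_s(i')\phi^{n,m}(i')\eta^{n,m}(i',i)$ and the extinction term $\sum_iX^{n,m}_s(i)^2X^{n,m,\ast}_s\epsilon^{n,m}(i)$, on applying $\phi^{n,m}(i')\eta^{n,m}(i',i)\le c/n^\ell$ (together with $\sum_i\eta^{n,m}(i',i)\le b$ from Lemma~\ref{le:conserv}) respectively $\epsilon^{n,m}(i)\le c/(mn^\ell)$, and using $(X^{n,m,\ast}_s)^2=m^2n^{2\ell}(\check R^{n,m}_s)^2$, are bounded by a multiple of $m^2n^\ell\int_0^t(\check R^{n,m}_s)^2\,ds$; hence
\[
\sum_i\Bigl\lvert\int_0^tX^{n,m}_s(i)\,dA^{n,m}_s(i)\Bigr\rvert\le K_3\int_0^t\sum_iX^{n,m}_s(i)^2(1+\check R^{n,m}_s)\,ds+K_3m^2n^\ell\int_0^t(\check R^{n,m}_s)^2\,ds\,.
\]
Likewise, in $\sum_id\langle N^{n,m}(i)\rangle_t$ computed from \eqref{eq:75a}, \eqref{eq:28a}, \eqref{eq:66}, the birth/death/migration terms are $O(mn^\ell\check R^{n,m}_t)$ (using $\sum_iX^{n,m}_s(i)=X^{n,m,\ast}_s=mn^\ell\check R^{n,m}_s$), extinction is $O(mn^\ell(\check R^{n,m}_t)^2)$, and fission is $O(m\check R^{n,m}_t)$ (thanks to the $n^{-\ell}$ gain), so altogether $\sum_id\langle N^{n,m}(i)\rangle_t\le K_4mn^\ell(1+(\check R^{n,m}_t)^2)\,dt$.

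From here the argument is formally identical to that of Lemma~\ref{le:dyra}. One sets $\tau^{n,m}_\gamma=\inf\{t\ge0:\,\check R^{n,m}_t>\gamma\}$ in place of \eqref{eq:73}; for fixed $n,m,\gamma$ the population remains finite, so on $[0,\tau^{n,m}_\gamma]$ all predictable variations are bounded and the stopped local martingales in \eqref{eq:77}, as well as the processes $X^{n,m}(i)\circ N^{n,m}(i)$, are true martingales. Then one bounds $\mathbf E\sum_{0<s\le t}\lvert\Delta X^{n,m}_{s\wedge\tau^{n,m}_\gamma}(i)\rvert\le K_5mn^\ell\gamma(1+\gamma)t$ via the compensators in \eqref{eq:18} (fission compensator $O(m\gamma)$, extinction $O(mn^\ell\gamma^2)$), applies Gronwall on the event $\{\sum_{i'}X^{n,m}_0(i')^2\le Rm^2n^\ell\}$ to obtain $\mathbf E\sum_iX^{n,m}_{t\wedge\tau^{n,m}_\gamma}(i)^2\mathbf 1_{\{\sum_{i'}X^{n,m}_0(i')^2\le Rm^2n^\ell\}}=O(m^2n^\ell)$ (the analogue of \eqref{eq:87}), bounds $2\mathbf E\sum_i\lvert\int_0^{t\wedge\tau^{n,m}_\gamma}X^{n,m}_{s-}(i)\,dN^{n,m}_s(i)\rvert$ on that event through the rearranged \eqref{eq:29'}, and uses Doob's inequality to control $\sup_{t\le L\wedge\tau^{n,m}_\gamma}\lvert\sum_i\int_0^tX^{n,m}_{s-}(i)\,dN^{n,m}_s(i)\rvert$ at level $Km^2n^\ell$; finally one lets $K\to\infty$, then $R\to\infty$ and $\gamma\to\infty$, invoking the initial--condition hypothesis $\lim_{K\to\infty}\limsup_{n,m\to\infty}\mathbf P(\sum_i\lvert X^{n,m}_0(i)\rvert^2>Km^2n^\ell)=0$ and Lemma~\ref{le:numberofgroups2} (which gives $\limsup_{n,m\to\infty}\mathbf P(\sup_{t\le L}\check R^{n,m}_t\ge\gamma)\to0$ as $\gamma\to\infty$). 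All estimates are uniform over $t\in[0,L]$.

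There is no genuine obstacle here, only the bookkeeping of powers of $n$: one must invoke the sharper estimates $\phi^{n,m}(i)\eta^{n,m}(i,i')\le c/n^\ell$ and $\epsilon^{n,m}(i)\le c/(mn^\ell)$ precisely at the places where a factor $(X^{n,m,\ast}_s)^2$, of order $m^2n^{2\ell}$, is generated; using merely ``$\phi^{n,m}$ bounded'' and ``$m\epsilon^{n,m}$ bounded'', as in Theorem~\ref{the:lln}, would yield an $O(m^2n^{2\ell})$ bound, which is too weak. Everything else is the same computation as in Lemma~\ref{le:dyra}.
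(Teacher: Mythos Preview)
Your proposal is correct and is exactly the argument the paper has in mind: the paper's own proof of Lemma~\ref{le:dyra1} consists of the single remark that it is ``proved similarly to Lemma~\ref{le:dyra}'', and you have carried out precisely that transcription, pinpointing where the sharper hypotheses $\phi^{n,m}(i')\eta^{n,m}(i',i)\le c/n^\ell$ and $\epsilon^{n,m}(i)\le c/(mn^\ell)$ must replace the weaker Theorem~\ref{the:lln} bounds so that the $(X^{n,m,\ast}_s)^2=m^2n^{2\ell}(\check R^{n,m}_s)^2$ factors come out as $O(m^2n^\ell)$ rather than $O(m^2n^{2\ell})$. One cosmetic point: the parenthetical orders you give for the fission contributions (``$O(m\check R^{n,m}_t)$'' in the quadratic variation and ``$O(m\gamma)$'' for the compensator) are actually $O(mn^\ell\check R^{n,m}_t)$ and $O(mn^\ell\gamma)$, since $\sum_i\eta^{n,m}(i',i)\le b$ and $\phi^{n,m}$ is merely bounded, with no extra $n^{-\ell}$ gain there; but your stated aggregate bounds $K_4mn^\ell(1+(\check R^{n,m}_t)^2)$ and $K_5mn^\ell\gamma(1+\gamma)t$ are unaffected and the argument goes through unchanged.
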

In analogy with \eqref{eq:9},
\begin{equation}
  \label{eq:93}
    \check Y_t^{n,m}=\check Y^{n,m}_0+\int_0^t \check Z^{n,m}_s\,ds
+\check N'^{n,m}_t\,,
\end{equation}
where
\begin{equation}
  \label{eq:94}
  \check  Y^{n,m}_t=
\int_{\R_+^\ell}
a^n(u)
\hat X^{n,m}_t(u)\,du\,,
\end{equation}

\begin{multline*}
  \check  Z^{n,m}_s=
\int_{\R_+^\ell}\Bl(
\sum_{k=1}^\ell\Bl(
n\bl(a^n\bl(u+\frac{e_k}{n}\br)-a^n(u)\br)
\,\hat X^{n,m}_s(u)\,
\frac{\lfloor nu_k\rfloor}{n}\,\hat\beta^{n,m,k}(u)\\+
n\bl(
a^n\bl(u-\frac{e_k}{n}\br)-
a^n(u)\br)\,\hat X^{n,m}_s(u)\,
\frac{\lfloor nu_k\rfloor}{n}\,(\hat\delta^{n,m,k}(u)+
\hat\mu^{n,m,k}(u))
\\+
n\bl(a^n\bl(u+\frac{e_k}{n}\br)-a^n(u)\br)
\hat X^{n,m}_s(u)
\displaystyle\int_{\R_+^\ell}
\frac{\hat X^{n,m}_s(u')}{ \check R^{n,m}_s}\,
\,\frac{\lfloor nu_k'\rfloor}{n}\,
\hat\mu^{n,m,k}(u')
\,du'\Br)
\\+\hat X^{n,m}_s(u)\,
\int_{\R_+^\ell}a^n(u')
\hat\phi^{n,m}(u)\check\eta^{n,m}(u,u')
\,du'
-a^n(u)\,\hat X^{n,m}_s(u)\,
\bl(\hat\phi^{n,m}(u)+
\check R^{n,m}_s\check\epsilon^{n,m}(u)\br)\Br)\,du
\\
+\int_0^t\int_{\R_+^\ell}\hat X^{n,m}_s(u)\,
\int_{B_{1/n}(\lfloor nu\rfloor/(2n))}a^n(u')
\hat\phi^{n,m}(u)\check\eta^{n,m}(u,u')
\,du'\,du\\
-\sum_i\sum_{k=1}^\ell
 \bl(a\bl(\frac{i+e_k}{n}\br)-a(\frac{i}{n})\br)
\frac{1}{ R^{n,m}_s}\,
i_k\mu^{n,m,k}(i)\frac{X^{n,m}_s(i)^2}{m^2n^\ell}\end{multline*}
and $(\check N'^{n,m}_t\,,t\ge0)$ is a locally square integrable
 martingale with the
predictable quadratic variation process, cf. \eqref{eq:61},
\begin{multline*}
      \langle\check N'^{n,m}\rangle_t=  \frac{1}{m}
\int_0^t\int_{\R_+^\ell}\Bl(\sum_{k=1}^\ell\Bl(
\frac{1}{n^{\ell-1}}\,\bl(a^n\bl(u+\frac{e_k}{n}\br)-
a^n(u)\br)^2
\hat X^{n,m}_s(u)
\frac{\lfloor nu_k\rfloor}{n}\,\hat\beta^{n,m,k}(u)\\
+\frac{1}{n^{\ell-1}}\,
\bl(a^n\bl(u-\frac{e_k}{n}\br)
-a^n(u)\br)^2
 \hat X^{n,m}_s(u)
\frac{\lfloor nu_k\rfloor}{n}\,\hat\delta^{n,m,k}(u)\\
+n\bl(a^n\bl(u+\frac{e_k}{n}\br)
-a^n(u)\br)^2
\hat X^{n,m}_s(u)
\int_{\R_+^\ell\setminus B_{1/n}(\lfloor nu\rfloor/n)}
\frac{\hat X^{n,m}_s(u')}{\check R^{n,m}_s}\,
\frac{\lfloor nu_k'\rfloor}{n}\,\hat\mu^{n,m,k}(u')
\,du'\\
+\frac{1}{n^{\ell-1}}\,\bl(a^n\bl(u-\frac{e_k}{n}\br)
-a^n(u)\br)^2
\hat X^{n,m}_s(u)
\frac{\lfloor nu_k\rfloor}{n}\,\hat\mu^{n,m,k}(u)\bl(1-
\frac{\hat X^{n,m}_s(u)}{n^\ell\check R^{n,m}_s}\br)\\
+n\bl(a^n\bl(u+\frac{e_k}{n}\br)
-a^n(u)\br)\frac{\hat X^{n,m}_s(u)}{n^\ell\check R^{n,m}_s}
\\
\int_{\R_+^\ell\setminus B_{1/n}(\lfloor nu\rfloor/n)}
\bl(a^n\bl(u'-\frac{e_k}{n}\br)
-a^n(u')\br)
\hat  X^{n,m}_s(u')\frac{\lfloor
  nu_k'\rfloor}{n}\,\hat\mu^{n,m,k}(u')
\,du'\Br)
\\+\frac{1}{n^\ell}\,
\hat\phi^{n,m}(u)\hat X^{n,m}_s(u)
\int_{\R_+^\ell}a^n(u')^2
\check\alpha^{n,m}(u,u')\,du'\\
+\frac{1}{n^\ell}\,a^n(u)^2\hat X^{n,m}_s(u)\bl(
\check R^{n,m}_s\check\epsilon^{n,m}(u)+\hat\phi^{n,m}(u)\br)\\
+\frac{1}{n^\ell}\,\hat X^{n,m}_s(u)\hat\phi^{n,m}(u)
\mathbf E\bl(\sum_{i'}a\bl(\frac{i'}{n}\br)
\theta^{n,m}_{\lfloor nu\rfloor}(i',1)\br)^2\\
-\frac{1}{n^\ell}\,a^n(u)
\hat X^{n,m}_s(u)
\hat\phi^{n,m}(u)\int_{\R_+^\ell}
a^n(u')\check\eta^{n,m}(u,u')\,du'
\Br)du\,ds
\,,
\end{multline*}
where $\check\alpha^{n,m}(u,u')=n^\ell\alpha^{n,m}_{\lfloor nu\rfloor}(\lfloor
nu'\rfloor)$\,. By \eqref{eq:104},
$  \check\alpha^{n,m}(u,u')
\le b\check\eta^{n,m}(u,u')\,.
$ Besides, $\mathbf E\bl(\sum_{i'}a(i'/n)
\theta^{n,m}_{\lfloor nu\rfloor}(i',1)\br)^2\le b\sup_{u}a(u)^2
\int_{\R_+^\ell}\check\eta^{n,m}(u,u')\,du'$\,.
By Lemma \ref{le:numberofgroups2}, Lemma
\ref{le:yule2} and \eqref{eq:90},
$\langle \check N'^{n,m}\rangle_t\to0$ in probability, as
$n,m\to\infty$\,.
Therefore,  for $\gamma>0$\,,
\begin{equation}
  \label{eq:92}
    \lim_{n,m\to\infty}\mathbf P(\sup_{t\le L}\abs{\check N'^{n,m}_t}>\gamma)=0\,.
\end{equation}

The following analogue of Lemma \ref{le:L^1} holds.
\begin{lemma}
  \label{le:L^{1,2}}
The sequence $\hat X^{n,m}$ is $C$--tight 
  in $\mathbb D(\R_+,\mathbb L^2(\R_+^\ell)\cap \mathbb L^1(\R_+^\ell))$\,.
\end{lemma}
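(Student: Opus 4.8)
The plan is to apply Jakubowski's tightness criterion for the Skorohod space $\mathbb D(\R_+,E)$ over the completely regular state space $E=\mathbb L^2(\R_+^\ell)\cap\mathbb L^1(\R_+^\ell)$, Jakubowski \cite{Jak86}. Two things have to be checked: a compact containment condition, namely that for every $L>0$ and $\delta>0$ there is a set compact in the topology of $E$ in which $\hat X^{n,m}_t$ stays for all $t\le L$ with probability at least $1-\delta$ eventually; and, for a family $\mathbb F$ of real continuous functions on $E$ that is closed under addition and separates the points of $E$, $C$--tightness in $\mathbb D(\R_+,\R)$ of the processes $f(\hat X^{n,m})$, $f\in\mathbb F$. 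For $\mathbb F$ I would take the linear functionals $z\mapsto\int_{\R_+^\ell}g(u)z(u)\,du$ with $g\in\mathbb C^1_c(\R_+^\ell)$; this family is closed under addition and separates points of $E$ because $\mathbb C^1_c(\R_+^\ell)$ is dense in $\mathbb L^2(\R_+^\ell)$, and each such functional is continuous for both of the weak topologies whose join defines the topology of $E$.

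Compact containment is where the work lies, and the step I expect to be the main obstacle. Fix $L>0$. I would take the candidate set $\mathcal K$ to be the intersection of a closed ball of radius $K$ in $\mathbb L^2(\R_+^\ell)$, a closed ball of radius $K$ in $\mathbb L^1(\R_+^\ell)$, and the sets $\{z\colon\int_{\abs{u}>R_j}\abs{z(u)}\,du\le\epsilon_j\}$ for sequences $R_j\uparrow\infty$ and $\epsilon_j\downarrow0$. The three auxiliary lemmas show that $\hat X^{n,m}_t\in\mathcal K$ for all $t\le L$ with probability close to one once $K$ is large and the $\epsilon_j$, $R_j$ are adjusted, since $\int_{\R_+^\ell}\abs{\hat X^{n,m}_t(u)}^2\,du=(m^2n^\ell)^{-1}\sum_i X^{n,m}_t(i)^2$ is controlled by Lemma \ref{le:dyra1}, $\int_{\R_+^\ell}\hat X^{n,m}_t(u)\,du=\check R^{n,m}_t$ is controlled by Lemma \ref{le:numberofgroups2}, and $\int_{\abs{u}>R}\hat X^{n,m}_t(u)\,du\le R^{-1}\int_{\R_+^\ell}\abs{u}\hat X^{n,m}_t(u)\,du$ is controlled by Lemma \ref{le:yule2}. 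The delicate point is that $\mathcal K$ must be compact for the topology of $E$, not merely for one of the two weak topologies separately. I would argue as follows: each of the defining sets is convex and closed for the strong $\mathbb L^2$ topology (for the $\mathbb L^1$ ball and the tail sets, by Fatou's lemma), hence weakly closed by Mazur's theorem, so $\mathcal K$ is weakly compact in the reflexive space $\mathbb L^2(\R_+^\ell)$; and on $\mathcal K$ the weak $\mathbb L^2$ topology is finer than the weak $\mathbb L^1$ topology, because for $g\in\mathbb L^\infty(\R_+^\ell)$ one writes $\int zg=\int_{\abs{u}\le R}zg+\int_{\abs{u}>R}zg$, bounds the tail by $2\norm{g}_\infty\epsilon_R$ uniformly over $\mathcal K$, and observes that $g\,\mathbf 1_{\{\abs{u}\le R\}}\in\mathbb L^2(\R_+^\ell)$, so the truncated part is continuous for the weak $\mathbb L^2$ topology. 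Hence the subspace topology of $E$ on $\mathcal K$ coincides with the weak $\mathbb L^2$ topology on $\mathcal K$, under which $\mathcal K$ is compact.

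Finally, for each $g\in\mathbb C^1_c(\R_+^\ell)$ I would establish $C$--tightness of $\check Y^{n,m}_t=\int_{\R_+^\ell}a^n(u)\hat X^{n,m}_t(u)\,du$, with $a=g$ and $a^n$ as in \eqref{eq:79}, from the semimartingale decomposition \eqref{eq:93}, \eqref{eq:94}: the martingale part $\check N'^{n,m}$ is uniformly negligible on $[0,L]$ by \eqref{eq:92}, and $\sup_{s\le L}\abs{\check Z^{n,m}_s}$ is bounded with probability tending to $1$ because the difference quotients $n(a^n(u\pm e_k/n)-a^n(u))$ are bounded ($a$ is $\mathbb C^1$), the coefficients $\tfrac{\lfloor nu_k\rfloor}{n}\hat\beta^{n,m,k}(u)$ and their analogues are bounded by the standing hypotheses, $\hat\phi^{n,m}$, $\check\epsilon^{n,m}$, $\check\eta^{n,m}$ are bounded and $a$ has compact support, so the spatial integrals against $\hat X^{n,m}_s$ are dominated by $\check R^{n,m}_s$, which is bounded above and below on $[0,L]$ by Lemma \ref{le:numberofgroups2}; the term carrying $X^{n,m}_s(i)^2/(m^2n^\ell)$ is handled by Lemma \ref{le:dyra1}, and the contribution of the integral over the shrinking ball $B_{1/n}(\lfloor nu\rfloor/(2n))$ vanishes. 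Since $\check Y^{n,m}_0$ converges in probability by hypothesis, $t\mapsto\int_0^t\check Z^{n,m}_s\,ds$ is uniformly equicontinuous on $[0,L]$ with high probability, so $\check Y^{n,m}$ is $C$--tight in $\mathbb D(\R_+,\R)$. Jakubowski's criterion then yields tightness of $\hat X^{n,m}$ in $\mathbb D(\R_+,E)$, and every limit point is continuous: a jump of a limit path at some $t$ would, since $\mathbb F$ separates points, force $f(\hat X(t-))\neq f(\hat X(t))$ for some $f\in\mathbb F$, hence a jump of $f(\hat X)$ at $t$, contradicting the continuity of the limits of the $C$--tight sequences $f(\hat X^{n,m})$. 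This establishes $C$--tightness of $\hat X^{n,m}$.
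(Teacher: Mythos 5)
Your proposal is correct and follows essentially the same route as the paper: Jakubowski's criterion from \cite{Jak86} applied with the family of linear functionals $z\mapsto\int_{\R_+^\ell}g(u)z(u)\,du$, compact containment supplied by Lemmas \ref{le:dyra1}, \ref{le:numberofgroups2}, and \ref{le:yule2} (the $\mathbb L^2$ bound, the $\mathbb L^1$ bound, and the tail estimate), and $C$--tightness of the real-valued processes obtained from the decomposition \eqref{eq:93}--\eqref{eq:94} together with \eqref{eq:92}. The only difference is that you spell out explicitly why the containment set is compact for the join of the two weak topologies, a point the paper leaves implicit in its appeal to Jakubowski's Theorem 4.6.
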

\begin{proof}
 Taking as $F$ in Theorem 4.6 in Jakubowski
\cite{Jak86}  the set of functions 
$f\to \int_0^L g(u)f(u)\,du$\,, where
$L>0$\,, $f\in \mathbb L^2(\R_+^\ell)\cap \mathbb L^1(\R_+^\ell)$ and $g(u)$ is continuously differentiable of compact
support, 
it is sufficient to prove 
that, for all $L>0$\,,
\begin{equation}
  \label{eq:47}
    \lim_{K\to\infty}\limsup_{n,m\to\infty}
\mathbf P(\sup_{t\in[0,L]}\int_{\R_+^\ell} \abs{\hat X^{n,m}_t(u)}^2
\,du>K)=0\,,
\end{equation}
\begin{equation}
  \label{eq:41}
    \lim_{K\to\infty}\limsup_{n,m\to\infty}
\mathbf P(\sup_{t\in[0,L]}\int_{\R_+^\ell} \abs{\hat X^{n,m}_t(u)}
\,du>K)=0
\end{equation}
and, for all differentiable $g$ of compact support  and all $\gamma>0$\,,
\begin{equation}
  \label{eq:49}
   \lim_{\chi\to0} \limsup_{n,m\to\infty}
\mathbf
P(\sup_{\substack{s,t\in[0,L]:\\\abs{s-t}\le\chi}}\abs{\int\limits_{\R_+^\ell}g(u)(
\hat X^n_s(u)-\hat X^n_{t}(u))\,du}>\gamma)=0\,.
\end{equation}
The convergence in  \eqref{eq:47} is the statement of Lemma
\ref{le:dyra1}. 
By Lemma \ref{le:yule2},
 for 
 any
$\gamma>0$\,, 
\begin{equation*}
    \lim_{K\to\infty}
\limsup_{n,m\to\infty}\mathbf P(\sup_{t\le L}\int_{\R_+^\ell}
\mathbf 1_{[K,\infty)}(\abs{u})
\hat X_t^{n,m}(u)\,du>\gamma)=0\,,
\end{equation*}
so, \eqref{eq:41} holds.
Similarly to the proof of Lemma \ref{le:L^1},
 \eqref{eq:49} follows from  \eqref{eq:93}, \eqref{eq:94},
 \eqref{eq:5},
\eqref{eq:46}, and
\eqref{eq:92}. 
\end{proof}
Let $(\check x_s\,,s\ge0)$ represent a limit point of
$\hat X^{n,m}$\,.
We identify it in a similar fashion to that in the proof
of Theorem \ref{the:lln}.
  As $n,m\to\infty$\,, along a subnet,
 in distribution in $\mathbb
D(\R_+,\R)$\,,
\begin{equation*}
  (\check  Y^{n,m}_s,
\check  Z_s^{n,m})\to
(\check Y_s,
\check Z_s)\,,
\end{equation*}
where
$\check Y_s=\int_{\R_+^\ell}
a(u)
\check x_s(u)\,du$ and 
\begin{multline*}
   \check  Z_s=\sum_{k=1}^\ell\Bl(\int_{\R_+^\ell}
\partial_{u_k}a(u)
\,\check x_s(u)u_k(\hat\beta^{k}(u)-\hat\delta^{k}(u)-\hat\mu^k(u))\,du\\
+\frac{\displaystyle\int_{\R_+^\ell} \check x_s(u)
u_k\,\hat\mu^{k}(u)\,du}{
\displaystyle\int_{\R_+^\ell} 
\check x_s(u)\,du}
  \int_{\R_+^\ell} \partial_{u_k} a(u)\check x_s(u)
\,du\Br)
\\+\int_{\R_+^\ell}
a(u)
\int_{\R_+^\ell}
\check x_s(u')\check\varphi(u',u)\,du'\,du
-\int_{\R_+^\ell}
a(u)\,\check x_s(u)\hat\phi(u)\,du\\-\int_{\R_+^\ell}\check x_s(u)\,du\int_{\R_+^\ell}
a(u)\,\check x_s(u)\check\epsilon(u)\,du\,.
\end{multline*}
Since $\check Z_s$ has continuous trajectories,
by \eqref{eq:9},  \eqref{eq:81}, \eqref{eq:8}, and \eqref{eq:68},
\begin{equation*}
  \frac{d}{dt}\,
 \int_{\R_+^\ell}a(u)\check x_t(u)\,du= \check Z_t\,.
\end{equation*}
In analogy with the derivation of \eqref{eq:45}, one obtains
\begin{multline}
  \label{eq:74}
  \frac{d}{dt}\,
 \int_{\R_+^\ell}a_t(u)\check x_t(u)\,du=
\int_{\R_+^\ell}\partial_t a_t(u)\check x_t(u)\,du\\
+\sum_{k=1}^\ell\Bl(\int_{\R_+^\ell}
\partial_{u_k}a_t(u)
\,\check x_t(u)u_k(\hat\beta^{k}(u)-\hat\delta^{k}(u)-\hat\mu^k(u))\,du\\
+\frac{\displaystyle\int_{\R_+^\ell} \check x_t(u)
u_k\,\hat\mu^{k}(u)\,du}{
\displaystyle\int_{\R_+^\ell} 
\check x_t(u)\,du}
  \int_{\R_+^\ell} \partial_{u_k} a_t(u)\check x_t(u)
\,du\Br)
\\+\int_{\R_+^\ell}
a_t(u)
\int_{\R_+^\ell}
\check x_t(u')\check\varphi(u',u)\,du'\,du
-\int_{\R_+^\ell}
a_t(u)\,\check x_t(u)\hat\phi(u)\,du\\-\int_{\R_+^\ell}\check x_t(u)\,du\int_{\R_+^\ell}
a_t(u)\,\check x_t(u)\check\epsilon(u)\,du\,.
\end{multline}
 Since $(\check x_t,\,t\ge0)\in \mathbb C(\R_+,\mathbb
L^2(\R_+^\ell)\cap \mathbb L^1(\R_+^\ell))$ and
the functions 
$u_k\hat\beta^k(u)$\,, $u_k\hat\delta^k(u)$\,, and $u_k\hat\mu^k(u)$
are bounded, 
\eqref{eq:74} holds  for $(a_t\,, t\ge0)\in\mathbb C(\R_+,\mathbb 
W^{1,\infty}( \R_+^\ell)+ \mathbb
W^{1,2}(\R_+^\ell))$\,.
One next introduces $\check F_t$ and 
$\check U_{s,t}$ in analogy with \eqref{eq:40}, \eqref{eq:44} and
\eqref{eq:67}. 
It  is an operator on $\mathbb
 L^\infty(\R_+^\ell)$\,.
Since the functions 
$u_k\hat\beta^k(u)$\,, $u_k\hat\delta^k(u)$\,, and $u_k\hat\mu^k(u)$
are bounded and Lipschitz--continuous,
  by \eqref{eq:44} and 
Theorem 3.1.1 on p.76 in Hille \cite{Hil69}, the
functions $\psi_{s,t}(u)$ are Lipschitz--continuous with respect to
$u$\,. 
Similarly, $\psi_{s,t}^{-1}(u)$ is Lipschitz--continuous as well.
By the argument of the proof of Proposition 9.6 on p.270 in Brezis
\cite{Bre11}, see also Problem 7.5 on p.174 in 
Gilbarg and Trudinger \cite{GilTru01}, $\mathbb W^{1,\infty}(\R_+^\ell)$ is invariant under
$\check U_{t,s}$\,. Furthermore,  $\check U_{t,s}$ is 
a bounded  operator  on $\mathbb W^{1,\infty}(\R_+^\ell)$ and,
given $f\in\mathbb W^{1,\infty}(\R_+^\ell)$\,,
$\check U_{t,s}f(u)$ is an absolutely continuous
 function of $s$  and
$      \partial_s \check U_{s,t}f(u)=-A(\tilde\lambda_s)\check U_{s,t}f(u)$ a.e.
As in the proof of Theorem \ref{the:lln}, one lets $a_s=\check U_{s,t}f$ in
\eqref{eq:74}
and obtains \eqref{eq:29} with $\tilde\lambda_s(du)=\check
x_s(u)\,du$\,.
As before, that implies that $\tilde\lambda_s$ is
specified uniquely, so, $\check x_s(u)$ is specified uniquely for almost
all $u$ with respect to the Lebesgue measure.

Let $\hat x_0\in\mathbb W^{1,\infty}(\R_+^\ell)$\,, 
let $\check \varphi(u,u')$ 
have Sobolev derivative $D_u\check\varphi(u,u')$
with respect to $u$ for almost all $u'$ such that
$\esssup_{u\in\R_+^\ell}\int_{\R_+^\ell}( \check
  \varphi(u,u')+
  \abs{D_u\check\varphi(u,u')})\,du'<\infty$\,, 
and let $\hat\phi(u)$ and $\check\epsilon(u)$ be elements of $\mathbb
W^{1,\infty}(\R_+^\ell)$\,.
The $u_k$--derivatives of  $\psi_{t,s}^{-1}(u)$ being
bounded implies that $U^\ast_{s,t}$ is well defined and is a bounded operator
on $\mathbb W^{1,\infty}(\R_+^\ell)$\,.
In analogy with \eqref{eq:58}, one obtains that
\begin{equation*}
  \check x_t=U_{0,t}^\ast\hat x_0
+\int_0^tU_{s,t}^\ast B(\hat\lambda_s)\check x_s\,ds\,.
\end{equation*}
The rest of the proof mimics the proof of
 Theorem \ref{the:lln}.

\appendix
\section{Appendix}  \begin{lemma}\label{le:split}
    Let $N=(N_t\,,t\ge0)$ be a Poisson process of rate $\lambda$ adapted to
    filtration $\mathbf F=(\mathcal{F}_t\,,t\ge0)$\,.  Let process 
$X=(X_t\,,t\ge0)$ be adapted to
    $\mathbf F$ as well. Let  $\xi(k,x)$ be bounded random
    variables that are 
    independent of $\mathcal{F}_{\tau_k-}$\,, with $\tau_k$ denoting
    the $k$th jump of $N_t$\,. Let $Y_t=\int_0^t
    \xi(N_s,X_{s-})\,dN_s$
and let  $\mathbf{G}=(\mathcal{G}_t\,,t\ge0)$ be the smallest filtration that contains
 $\mathbf{F}$ such that $Y=(Y_t\,,t\ge0)$ is $\mathbf G$--adapted.
Then the process $Y$ has $\mathbf
G$--compensator $\tilde Y_t=\int_0^t\mathbf
E\xi(N_s+1,x)\large|_{x=X_s}\,\lambda\,ds$\,.
The locally square integrable martingale $(Y_t-\tilde Y_t\,,t\ge0)$ has the process
$\bl(\int_0^t\mathbf
E\xi(N_s+1,x)^2\large|_{x=X_s}\,\lambda\,ds\,,t\ge0\br)$ as the predictable
quadratic variation process.
  \end{lemma}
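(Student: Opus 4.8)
The plan is to verify directly that $Y-\tilde Y$ is a $\mathbf G$--martingale and then to identify its predictable quadratic variation. Write $h(k,x)=\mathbf E\xi(k,x)$; since $\xi(k,\cdot)$ is bounded and independent of $\mathcal F_{\tau_k-}$ while $X_{\tau_k-}$ is $\mathcal F_{\tau_k-}$--measurable, $\mathbf E(\xi(k,X_{\tau_k-})\mid\mathcal F_{\tau_k-})=h(k,X_{\tau_k-})$ and, likewise, $\mathbf E(\xi(k,X_{\tau_k-})^2\mid\mathcal F_{\tau_k-})=\mathbf E\xi(k,x)^2|_{x=X_{\tau_k-}}$. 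With $\xi_k=\xi(k,X_{\tau_k-})$ one has $Y_t=\sum_{k:\,\tau_k\le t}\xi_k$, and the asserted compensator is $\tilde Y_t=\int_0^t h(N_s+1,X_s)\,\lambda\,ds$. I would first localise by $S_r=\inf\{t:\,N_t\ge r\}$: on $[0,S_r]$ all processes below are bounded on compact time intervals and have jumps bounded by $\sup_{k,x}\abs{\xi(k,x)}$, so local square integrability is automatic and it suffices to work with the $S_r$--stopped processes.

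Next I would decompose $Y_t-\tilde Y_t=A_t+\tilde A_t$, with $A_t=\sum_{k:\,\tau_k\le t}\bl(\xi_k-h(k,X_{\tau_k-})\br)$ and $\tilde A_t=\int_0^t h(N_{s-}+1,X_{s-})\,d(N_s-\lambda s)$, using $\int_0^t h(N_{s-}+1,X_{s-})\,dN_s=\sum_{k:\,\tau_k\le t}h(k,X_{\tau_k-})$. The process $\tilde A$ is the stochastic integral of a bounded $\mathbf F$--predictable integrand against the $\mathbf F$--martingale $(N_s-\lambda s)$, hence an $\mathbf F$--martingale, and it stays a $\mathbf G$--martingale because $\mathcal G_t\subset\mathcal F_t\vee\mathcal E$ with $\mathcal E=\sigma(\xi(k,\cdot),\,k\in\N)$ independent of $\mathcal F_\infty$, so the enlargement is an immersion (equivalently, $N$ is still a rate--$\lambda$ Poisson process relative to $\mathbf G$, and every $\mathbf F$--martingale is a $\mathbf G$--martingale). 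For $A$ I would argue jumpwise: with $s<t$ fixed, the event $\{s<\tau_k\le t\}=\{N_s<k\le N_t\}$ and the variables $X_{\tau_k-}$ and $h(k,X_{\tau_k-})$ are $\mathcal F_{\tau_k-}$--measurable, whereas the strict $\mathbf G$--past $\mathcal G_{\tau_k-}$ is contained in $\mathcal F_{\tau_k-}\vee\sigma(\xi(1,\cdot),\dots,\xi(k-1,\cdot))$ and is thus independent of $\xi(k,\cdot)$; hence $\mathbf E\bl((\xi_k-h(k,X_{\tau_k-}))\,\mathbf 1_{\{s<\tau_k\le t\}}\mid\mathcal G_{\tau_k-}\br)=0$, and since $\mathcal G_s\cap\{s<\tau_k\}\subset\mathcal G_{\tau_k-}$ the tower property yields the same identity with $\mathcal G_s$ in place of $\mathcal G_{\tau_k-}$. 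Summing over $k$ (the sum being dominated by $2\sup_{k,x}\abs{\xi(k,x)}\,(N_t-N_s)\in\mathbb L^1$) gives $\mathbf E(A_t-A_s\mid\mathcal G_s)=0$. So $Y-\tilde Y$ is a locally square integrable $\mathbf G$--martingale, and, $\tilde Y$ being continuous and of finite variation, it is the $\mathbf G$--compensator of $Y$.

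For the predictable quadratic variation I would use continuity of $\tilde Y$ to get $[Y-\tilde Y]_t=[Y]_t=\sum_{k:\,\tau_k\le t}\xi_k^2$, and then re-run the jumpwise argument of the previous paragraph verbatim with $\xi(k,x)^2$ in place of $\xi(k,x)$: the $\mathbf G$--compensator of $\sum_{k:\,\tau_k\le t}\xi_k^2$ is $\int_0^t\mathbf E\xi(N_s+1,x)^2|_{x=X_s}\,\lambda\,ds$, and this compensator is precisely $\langle Y-\tilde Y\rangle$ since $Y-\tilde Y$ is a locally square integrable martingale.

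The step I expect to require the most care is the interplay of the independence hypothesis with the enlarged filtration $\mathbf G$: one has to make sure both that a jump mark $\xi_k$ remains conditionally centred at $h(k,X_{\tau_k-})$ given the \emph{strict} $\mathbf G$--past (which uses that the $\xi(k,\cdot)$ are mutually independent, not only that each is independent of $\mathcal F_{\tau_k-}$) and that adjoining these marks does not perturb the driving Poisson process, i.e. the immersion property $\mathcal G_t\subset\mathcal F_t\vee\mathcal E$ with $\mathcal E\perp\mathcal F_\infty$ --- both being consequences of the standing mutual independence of the model's primitive families. The remaining items are the standard bookkeeping for compensators of marked point processes.
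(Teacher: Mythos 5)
Your argument is correct, and its core is the same as the paper's: conditioning at the jump times $\tau_k$ on the strict past $\mathcal G_{\tau_k-}$, using that $X_{\tau_k-}$ is measurable there while $\xi(k,\cdot)$ is independent of it, and then handling the quadratic variation by compensating $\sum_{\tau_k\le t}\xi_k^2$ (your route via $[Y-\tilde Y]$ and the paper's via the It\^o formula for $(Y_t-\tilde Y_t)^2$ are the same computation). The packaging of the first claim differs: the paper verifies the compensator property through the stopping--time characterization, checking $\mathbf E\int_0^\tau dY=\mathbf E\int_0^\tau d\tilde Y$ for an arbitrary $\mathbf G$--stopping time $\tau$, whereas you exhibit the explicit decomposition $Y-\tilde Y=A+\tilde A$ and check the martingale property of each piece over deterministic times, invoking the immersion $\mathcal G_t\subset\mathcal F_t\vee\mathcal E$ with $\mathcal E\perp\mathcal F_\infty$ for $\tilde A$. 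Note that the paper needs the same immersion-type fact implicitly (its last step replaces $dN_s$ by $\lambda\,ds$ under a $\mathbf G$--stopping time, i.e.\ it uses that $\lambda t$ is still the $\mathbf G$--compensator of $N$), and both proofs use independence of $\xi(k,\cdot)$ from the enlarged strict past $\mathcal G_{\tau_k-}$ rather than only from $\mathcal F_{\tau_k-}$ as literally stated; you are right that this is supplied by the mutual independence of the model's primitives, so your explicit flagging of it is a point in your favour rather than a gap. Your decomposition has the small advantage of isolating where each hypothesis enters; the paper's stopping--time argument is shorter because it never needs to name the enlargement.
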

  \begin{proof}
We prove the first claim.
    It suffices to show that, for arbitrary $\mathbf G$--stopping
    time $\tau$\,, $
\mathbf E \int_0^\tau \xi(N_s,X_{s-})\,dN_s=
\mathbf E\int_0^\tau\mathbf
    E\xi(N_s+1,x)\large|_{x=X_s}\,\lambda\,ds$\,.
    Since $\{\tau_k\le\tau\}\in \mathcal{G}_{\tau_k-}$\,, the latter
    $\sigma$--algebra and the $\xi(k,x)$ are independent and
    $X_{\tau_k-}$ is $\mathcal{G}_{\tau_k-}$--measurable,
\begin{multline*}
   \mathbf E \int_0^\tau \xi(N_s,X_{s-})\,dN_s=
 \sum_{k} \mathbf E\ind{\tau_k\le \tau}\xi(k,X_{\tau_k-})=
\sum_{k} \mathbf E(\ind{\tau_k\le \tau}\mathbf
E(\xi(k,X_{\tau_k-})|\mathcal{G}_{\tau_k-}))\\=\sum_{k}
\mathbf 
E\bl(\ind{\tau_k\le \tau}\mathbf
E(\xi(k,x)|\mathcal{G}_{\tau_k-})\large|_{x=X_{\tau_k-}}
\br)=\sum_{k}\mathbf E\bl(\ind{\tau_k\le \tau}\mathbf
E(\xi(k,x)\large|_{x=X_{\tau_k-}})
\br)\\=\mathbf E\int_0^\tau\mathbf
E\xi(N_{s-}+1,x)\large|_{x=X_{s-}}
\,dN_s=\mathbf E\int_0^\tau\mathbf
E\xi(N_{s}+1,x)\large|_{x=X_{s}}
\,\lambda ds\,.
\end{multline*}
In order to prove the second claim, we write by the It\^o formula for the
locally square integrable martingale $M_t=Y_t-\tilde Y_t$\,,
\begin{equation*}
  M_t^2=2\int_0^t M_{s-}\,dM_s+\sum_{s\le t}(\Delta M_s)^2
=2\int_0^t M_{s-}\,dM_s+\int_0^t\xi(N_s,X_{s-})^2\,dN_s\,.
\end{equation*}
By the first part of the proof, the compensator of 
$\int_0^t\xi(N_s,X_{s-})^2\,dN_s$ is given by 
$\int_0^t\mathbf
E\xi(N_{s}+1,x)^2\large|_{x=X_{s}}
\,\lambda ds$\,.
  \end{proof}
\def\cprime{$'$} \def\cprime{$'$} \def\cprime{$'$} \def\cprime{$'$}
  \def\cprime{$'$} \def\polhk#1{\setbox0=\hbox{#1}{\ooalign{\hidewidth
  \lower1.5ex\hbox{`}\hidewidth\crcr\unhbox0}}} \def\cprime{$'$}
  \def\cprime{$'$} \def\cprime{$'$} \def\cprime{$'$} \def\cprime{$'$}
  \def\cprime{$'$}

\end{document}